\documentclass[12pt]{amsart}
\usepackage{amsmath,amsthm,enumerate}
\usepackage{mathdots,bm,url,amscd}
\usepackage{amsfonts,amssymb,mathscinet}
\usepackage{a4wide}
\usepackage[all]{xy}
\usepackage[stable]{footmisc}

\begin{document}

\title{A conjecture on Whittaker--Fourier coefficients of cusp forms}

\author{Erez Lapid}
\address{Institute of Mathematics, The Hebrew University of Jerusalem, Jerusalem 91904, Israel and
Department of Mathematics, Weizmann Institute of Science, Rehovot 76100 Israel}
\email{erez.m.lapid@gmail.com}
\author{Zhengyu Mao}
\address{Department of Mathematics and Computer Science, Rutgers University, Newark, NJ 07102, USA}
\email{zmao@rutgers.edu}
\date{\today}
\thanks{Authors partially supported by U.S.--Israel Binational Science Foundation Grant \# 057/2008}
\thanks{Second named author partially supported by NSF grant DMS 1000636 and by a fellowship from the Simons Foundation}
\dedicatory{To the memory of Stephen Rallis}
\keywords{Whittaker coefficients, classical groups, automorphic forms}
\subjclass[2010]{11F30, 11F70}

\begin{abstract}
We formulate an analogue of the Ichino--Ikeda conjectures for the Whittaker--Fourier coefficients of
automorphic forms on quasi-split reductive groups. This sharpens the conjectures of Sakellaridis--Venkatesh
in the case at hand.
\end{abstract}

\maketitle

\newcommand{\A}{\mathbb{A}}                            
\newcommand{\C}{\mathbb{C}}                            
\newcommand{\Q}{\mathbb{Q}}                            
\newcommand{\R}{\mathbb{R}}                            
\newcommand{\Z}{\mathbb{Z}}                            
\newcommand{\AF}{\mathcal{A}}                          
\newcommand{\bs}{\backslash}
\newcommand{\Nm}{\operatorname{N}}
\newcommand{\BC}{\operatorname{BC}}
\newcommand{\Asai}{\operatorname{As}}
\newcommand{\Hom}{\operatorname{Hom}}
\newcommand{\vol}{\operatorname{vol}}
\newcommand{\cusp}{\operatorname{cusp}}
\newcommand{\disc}{\operatorname{disc}}
\newcommand{\temp}{\operatorname{temp}}
\newcommand{\dsum}{\oplus}
\newcommand{\ab}{\operatorname{ab}}
\newcommand{\OO}{\mathcal{O}}                         
\newcommand{\Gal}{\operatorname{Gal}}
\newcommand{\K}{\mathbf{K}}                             
\newcommand{\stint}{\int^{st}}                         
\newcommand{\Mpsi}[1]{\psi^{w_{#1}}_{N_{#1}}}
\newcommand{\rst}[1]{{#1}_\#}
\newcommand{\Res}{\operatorname{Res}}
\newcommand{\imgset}[1]{\Pi^{#1}(\Res_{E/F}\GL_m)}
\newcommand{\canArthur}{\mathcal{H}}
\newcommand{\mainconst}[1]{c_{#1}^{\psi_N}}
\newcommand{\loc}{\operatorname{loc}}
\newcommand{\SC}{\operatorname{sc}}
\newcommand{\Img}{\operatorname{Im}}
\newcommand{\simw}{\sim_w}
\newcommand{\dual}[1]{#1^{\vee}}
\newcommand{\alt}[1]{#1^{\wedge}}
\newcommand{\dW}{\dual{W}}           
\newcommand{\Mp}{\widetilde{\operatorname{Sp}}}
\newcommand{\Ad}{\operatorname{Ad}}
\newcommand{\GL}{\operatorname{GL}}
\newcommand{\SO}{\operatorname{SO}}
\newcommand{\Orth}{\operatorname{O}}
\newcommand{\Gspin}{\operatorname{Gspin}}
\newcommand{\U}{\operatorname{U}}
\newcommand{\GU}{\operatorname{GU}}
\newcommand{\Sp}{\operatorname{Sp}}
\newcommand{\SL}{\operatorname{SL}}
\newcommand{\mira}{\mathcal{P}}                     
\newcommand{\sym}{\operatorname{sym}}
\newcommand{\Ind}{\operatorname{Ind}}                   
\newcommand{\ind}{\operatorname{ind}}                   
\newcommand{\res}{\operatorname{res}}                   
\newcommand{\AI}{\operatorname{AI}}
\newcommand{\stdG}[1]{t_{#1}}
\newcommand{\Langlands}{\mathcal{L}}
\newcommand{\params}{\Psi}
\newcommand{\tparams}{\params^t}
\newcommand{\id}{\operatorname{id}}
\newcommand{\cent}{\mathcal{S}}
\newcommand{\csgr}{\mathcal{CSGR}}                     
\newcommand{\Irr}{\operatorname{Irr}}                  
\newcommand{\JF}{\mathbf{j}}                           
\newcommand{\rest}{\big|}                              
\newcommand{\Cusp}{\operatorname{Cusp}}                
\newcommand{\gen}{\operatorname{gen}}                  
\newcommand{\sqr}{\operatorname{sqr}}                  
\newcommand{\unr}{\operatorname{unr}}                  
\newcommand{\PGL}{\operatorname{PGL}}
\newcommand{\Ker}{\operatorname{Ker}}
\newcommand{\Coker}{\operatorname{Coker}}
\newcommand{\mc}{\operatorname{MC}}           
\newcommand{\modulus}{\delta}
\newcommand{\whit}{\mathcal{W}}                          
\newcommand{\Whit}{\mathbb{W}}                           
\newcommand{\tr}{\operatorname{tr}}                        
\newcommand{\inner}[2]{\left(#1,#2\right)}
\newcommand{\sprod}[2]{\left\langle#1,#2\right\rangle}
\newcommand{\abs}[1]{\left|{#1}\right|}
\newcommand{\norm}[1]{\lVert#1\rVert}
\newcommand{\sm}[4]{\left(\begin{smallmatrix}{#1}&{#2}\\{#3}&{#4}\end{smallmatrix}\right)}
\newcommand{\der}{\operatorname{der}}                     
\newcommand{\unitcirc}{\mathbb{S}^1}
\newcommand{\Spec}{\operatorname{Spec}}
\newcommand{\et}{\operatorname{\acute et}}
\newcommand{\flf}{\operatorname{fl}}
\newcommand{\Gad}{\operatorname{ad}}

\newtheorem{theorem}{Theorem}[section]
\newtheorem{lemma}[theorem]{Lemma}
\newtheorem{proposition}[theorem]{Proposition}
\newtheorem{remark}[theorem]{Remark}
\newtheorem{conjecture}[theorem]{Conjecture}
\newtheorem{definition}[theorem]{Definition}
\newtheorem{corollary}[theorem]{Corollary}
\newtheorem{example}[theorem]{Example}


\numberwithin{equation}{section}

\setcounter{tocdepth}{1}
\tableofcontents
\section{Introduction}
Let $G$ be a quasi-split reductive group over a number field $F$ and $\A$ the ring of adeles of $F$.
Let $B$ be a Borel subgroup of $G$ defined over $F$, $N$ the unipotent radical
of $B$ and fix a non-degenerate character $\psi_N$ of $N(\A)$, trivial on $N(F)$.
For a cusp form $\varphi$ of $G(F)\bs G(\A)$ we consider the Whittaker--Fourier coefficient\footnote{The Haar
measure is normalized so that $\vol(N(F)\bs N(\A))=1$}
\[
\whit(\varphi)=\whit^{\psi_N}(\varphi):=\int_{N(F)\bs N(\A)}\varphi(n)\psi_N(n)^{-1}\ dn.
\]
If $\pi$ is an irreducible cuspidal representation then $\whit$, if non-zero, gives a realization of
$\pi$ in the space of Whittaker functions on $G(\A)$, which by local multiplicity one
depends only on $\pi$ as an abstract representation.
It therefore provides a useful tool for understanding $\pi$, both computationally and
conceptually. It is natural to study the size of $\whit(\varphi)$.
For the general linear group, the theory of Rankin--Selberg integrals, developed in higher rank
by Jacquet, Piatetski-Shapiro and Shalika, expresses, among other things, the Petersson inner product in terms
of a canonical inner product on the local Whittaker model of $\pi$. (See e.g.~\cite{MR1816039}.)
The global factor which shows up in this expression is the residue at $s=1$ of $L(s,\pi\otimes\pi^\vee)$
(or alternatively,\footnote{We caution that some authors refer to the adjoint $L$-function
as the quotient of $L(s,\pi\otimes\pi^\vee)$ by $\zeta_F(s)$} the adjoint $L$-function of $\pi$)
where $\pi^\vee$ is the contragredient of $\pi$. Dually, $\abs{\whit(\varphi)}^2$ is related to
$(\res_{s=1}L(s,\pi,\Ad))^{-1}$ (assuming certain $L^2$-normalization).

One way to try to make this more precise, and to generalize it to other groups,
is to take the $\psi_N$-th Fourier coefficient
of a matrix coefficient of $\pi$ and to relate it to the product of Whittaker functions.
The integral
\[
\int_{N(\A)}(\pi(n)\varphi,\varphi^\vee)_{G(F)\bs G(\A)^1}\psi_N(n)^{-1}\ dn
\]
does not converge. In fact, even the local integrals
\begin{equation} \label{eq: Fourier MC}
I_v(\varphi,\varphi^\vee)=\int_{N(F_v)}(\pi_v(n_v)\varphi_v,\varphi_v^\vee)_v\psi_N(n_v)^{-1}\ dn_v
\end{equation}
(where $\varphi_v$, $\varphi_v^\vee$ are now taken from the space of $\pi_v$ and $\pi_v^\vee$ respectively
and $(\cdot,\cdot)_v$ is the canonical pairing)
do not converge unless $\pi_v$ is square-integrable.
However, it is possible to regularize $I_v$, and by the Casselman--Shalika formula, almost everywhere
we have $I_v(\varphi_v,\varphi_v^\vee)=\Delta_{G,v}(1)L(1,\pi_v,\Ad)^{-1}$ if
$\varphi_v$, $\varphi_v^\vee$ are unramified vectors with $(\varphi_v,\varphi_v^\vee)=1$
and $\Delta_{G,v}(1)$ is a certain $L$-factor depending on $G_v$ but not $\pi_v$.
By local multiplicity one there exists a constant $\mainconst{\pi}$ depending on $\pi$ such that
\begin{equation} \label{eq: gllc}
\whit^{\psi_N}(\varphi)\whit^{\psi_N^{-1}}(\varphi^\vee)
=(\mainconst{\pi}\vol(G(F)\bs G(\A)^1))^{-1}\frac{\Delta_G^S(s)}{L^S(s,\pi,\Ad)}\big|_{s=1}\prod_{v\in S}I_v(\varphi_v,\varphi_v^\vee)
\end{equation}
for all $\varphi=\otimes\varphi_v\in\pi$, $\varphi^\vee=\otimes_v\varphi_v^\vee\in\pi^\vee$
and all $S$ sufficiently large.
Implicit here is the existence and non-vanishing of $\frac{\Delta_G^S(s)}{L^S(s,\pi,\Ad)}\big|_{s=1}$,
(or equivalently, of $\lim_{s\rightarrow 1}(s-1)^lL^S(s,\pi,\Ad)$
where $l$ is the dimension of the split part of the center of $G$).

The Rankin--Selberg theory for $\GL_n$ alluded to above shows that
$\mainconst{\pi}=1$ for any irreducible cuspidal representations of $\GL_n$. (See \S\ref{sec: GLn}.
A similar result was proved independently by Sakellaridis--Venkatesh \cite{1203.0039}.)

It is desirable to extend this relation to other quasi-split groups.
The first problem is that $\mainconst{\pi}$ depends on the automorphic realization of $\pi$.
Therefore, in cases where there is no multiplicity one, it is not clear which $\pi$'s to take.

There are (at least) two ways to approach this problem.
One way is to use the notion of \emph{$\psi_N$-generic spectrum} as defined by Piatetski-Shapiro \cite{MR546599}.
It is the orthogonal complement of the $L^2$ automorphic forms with vanishing Whittaker functions.
We denote this space by $L^2_{\cusp,\psi_N}(G(F)\bs G(\A)^1)$.
This space is multiplicity free and it is a meaningful problem
to study $\mainconst{\pi}$ for the irreducible constituents of the $\psi_N$-generic spectrum.

Another, more speculative way is to admit Arthur's conjectures (for the discrete spectrum) in a strong form, namely
a canonical decomposition
\[
L_{\disc}^2(G(F)\bs G(\A)^1)=\mathop{\widehat\oplus}\limits_{\phi}\overline{\canArthur_\phi}
\]
according to elliptic Arthur's parameters.
This approach was taken by Sakellaridis--Venkatesh in \cite{1203.0039}.
A good indication for its validity is a recent result of V. Lafforgue who established, in the function field case,
(say in the split case) a canonical decomposition of $C_c^{\cusp}(G(F)\bs G(\A)/K\Xi,\overline{\Q_l})$ according to Langlands's parameters,
for suitable compact open subgroups $K\subset G(\A)$ and a central lattice $\Xi$ \cite{1209.5352}.

The difficulty with this approach in the number field case is that not only are Arthur's conjectures wide open,
it is not even clear how to uniquely characterize the spaces $\canArthur_\phi$
since they cannot be pinned down purely representation theoretically (at least by standard Hecke operators).\footnote{In \cite{1209.5352}
Lafforgue introduces (in the function field case) additional symmetries of geometric nature on the space $G(F)\bs G(\A)/K\Xi$}
Nevertheless, it turns out to be profitable to admit the existence of the spaces $\canArthur_\phi$, hypothetical as they may be.

For the group $G=\GL_n$ the spaces $\canArthur_\phi$ are always irreducible.
Moreover, $\canArthur_\phi$ is cuspidal if and only if it is generic and this happens if and only if $\phi$ is of Ramanujan type
(i.e., it has trivial $\SL_2$-type).
For other groups, the reducibility of $\canArthur_\phi$ is measured to a large extent by a certain finite group $\cent_\phi$
(and its local counterparts) attached to $\phi$ \cite{MR1021499}. This of course goes back to Labesse--Langlands (\cite{MR540902}, cf.~\cite{MR757954}).
For instance, if $G$ is split then the group $\cent_\phi$ is the quotient of the centralizer of the image of $\phi$
in the complex dual $\widehat G$ of $G$ by the center of $\widehat G$.
In particular, for $G=\GL_n$ we always have $\cent_\phi=1$.
In general we expect that if $\phi$ is of Ramanujan type then
$\canArthur_\phi\cap L^2_{\cusp,\psi_N}(G(F)\bs G(\A)^1)$ is irreducible. We denote the representation on this (hypothetical) space by $\pi^{\psi_N}(\phi)$.
If $\phi$ is not of Ramanujan type (as happens for instance if $\canArthur_\phi$ is not contained in the cuspidal spectrum)
then $\whit^{\psi_N}$ vanishes on $\canArthur_\phi$ -- see \cite{MR2784745}.
One is lead to make the following:

\begin{conjecture} \label{conj: intro}
For any elliptic Arthur's parameter $\phi$ of Ramanujan type we have $\mainconst{\pi^{\psi_N}(\phi)}=\abs{\cent_\phi}$.
\end{conjecture}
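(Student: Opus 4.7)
The natural strategy is to transfer the problem from $G$ to $\GL_n$, where we already know $\mainconst{\Pi}=1$. Concretely, for an elliptic Arthur parameter $\phi$ of Ramanujan type, let $\Pi$ denote the isobaric automorphic representation of $\GL_n$ obtained from $\phi$ by the Arthur--Langlands functorial transfer. The plan is to realize the generic member $\pi^{\psi_N}(\phi)$ of $\canArthur_\phi$ via the automorphic descent of Ginzburg--Rallis--Soudry from $\Pi$, and then compare Whittaker--Fourier coefficients on both sides using the identity \eqref{eq: gllc} on $\GL_n$.

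First I would write down the descent explicitly as an iterated Fourier--Jacobi coefficient of a residue of an Eisenstein series induced from $\Pi$. Next I would compute $\whit^{\psi_N}(\varphi)\whit^{\psi_N^{-1}}(\varphi^\vee)$ for $\varphi$ in the descent by unfolding the corresponding Fourier coefficient on $\GL_n$. This should express $\abs{\whit^{\psi_N}(\varphi)}^2$ in terms of a Rankin--Selberg-type integral against $\Pi$, times a product of local zeta integrals. Applying the $\GL_n$-case (and factoring $L^S(s,\Pi,\Pi^\vee)$ through $L^S(s,\pi,\Ad)$ using the decomposition of $\phi$ into irreducible constituents) should yield an identity of the form \eqref{eq: gllc} for $\pi^{\psi_N}(\phi)$, with a specific explicit global constant in place of $\mainconst{\pi^{\psi_N}(\phi)}$.

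The factor $\abs{\cent_\phi}$ should then emerge as a purely combinatorial discrepancy traceable to the unfolding. Two sources seem most plausible: either the residue calculation at the relevant pole of the Eisenstein series produces a factor $\abs{\cent_\phi}$ coming from the number of self-dual summands of $\phi$ (which for classical groups controls both the order of $\cent_\phi$ and the tower of residues in the descent), or the inner product of the descent with itself decomposes as a sum of $\abs{\cent_\phi}$ equal terms indexed by the characters of $\cent_\phi$, only one of which matches the Whittaker functional. A sanity check would be to verify the statement on simple examples (e.g.~$\SL_2$, $\SO_{2n+1}$ with a parameter that is a sum of two distinct self-dual cuspidals) where $\cent_\phi$ is small and the descent is well understood.

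The main obstacle is twofold. First, even formulating the conjecture cleanly already requires admitting Arthur's conjectures, in particular the existence and irreducibility of $\canArthur_\phi \cap L^2_{\cusp,\psi_N}$; this is a substantive hypothesis that the strategy must input rather than prove. Second, granting it, the hard work lies in matching local factors at ramified and archimedean places, and in justifying the claim that the regularized local integrals $I_v(\varphi_v,\varphi_v^\vee)$ that appear on the two sides of the unfolding agree (up to the expected $\Delta_{G,v}(1)$ and $L$-factor ratios). In practice this means establishing a local Rankin--Selberg-to-Whittaker identity at every place, which is where most of the technical difficulty, including stability of intertwining operators and careful Plancherel normalization, will be concentrated.
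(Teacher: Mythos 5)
The statement you were given is Conjecture \ref{conj: intro}, which the paper \emph{formulates} but does not prove; the authors explicitly leave it open, announcing in the introduction that a sequel will reduce the classical-group variants (Conjectures \ref{conj: globalclassical} and \ref{conj: metplectic global}) to a local conjectural identity. What the paper actually establishes is: (i) the case $G=\GL_m$, Theorem \ref{thm: GLn}, where $\mainconst{\pi}=1=\abs{\cent_\phi}$ follows from the Rankin--Selberg inner product formula together with the purely local unfolding identity of Lemma \ref{lem: whitrltnGLn}, and hence the $\PGL_m$ and $\SL_m$ cases (Corollaries \ref{cor: PGLm}, \ref{cor: SLm}); (ii) a reduction, under compatibility assumptions on Arthur's conjectures, from general quasi-split $G$ to the semisimple simply connected case, via Corollaries \ref{cor: subgroup} and \ref{cor: quotient}; and (iii) several low-rank classical groups in \S\ref{sec: examples}, treated not by descent but by exceptional isomorphisms that reduce to $\GL_m$, Lemma \ref{lem: restG} (which gives $\mainconst{\pi}=\abs{X(\tilde\pi)}\mainconst{\tilde\pi}$), and a case-by-case verification that $\abs{X(\tilde\pi)}=\abs{\cent_\phi}$. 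So there is no proof of the general statement in this paper to compare against.

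Your proposed route — descend from $\GL_n$, then match Whittaker coefficients against the $\GL_n$ formula via a local identity — is indeed close in spirit to what the authors plan for the sequel, and you correctly identify the two substantive obstructions: the formulation presupposes Arthur's conjectures (existence and irreducibility of $\canArthur_\phi\cap L^2_{\cusp,\psi_N}$), and the hard technical core is a local identity at ramified and archimedean places. But as written, this is a strategy sketch, not an argument: the crucial factor $\abs{\cent_\phi}$ is only gestured at, and neither of the two mechanisms you suggest for producing it (multiplicity of residues in the Eisenstein tower, or a decomposition of the descent's inner product into $\abs{\cent_\phi}$ terms indexed by characters of $\cent_\phi$) is made precise, let alone proved. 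You should also note that the paper's own low-rank verifications go by a genuinely different and more elementary route — exceptional isomorphisms and counting self-twists — whereas your sketch is aimed at the general classical-group case and would have to confront all the local subtleties that \S\ref{sec: FCMC} was designed to set up (stable integrals, the Jacquet integral, the archimedean definitions of \S\ref{sec: arch}), none of which your outline engages with.
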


The conjecture is inspired by recent conjectures and results of Ichino--Ikeda \cite{MR2585578}
which sharpen the Gross--Prasad conjecture. (See \cite{Gan_symplecticlocal} for a recent extension
of these conjectures by Gan--Gross--Prasad.)
In fact, this goes back to early results of Waldspurger (\cite{MR783511, MR646366} -- see below).
More recently, Sakellaridis--Venkatesh formulated conjectures in the much broader scope of periods
over spherical subgroups (at least in the split case) \cite{1203.0039}.
Conjecture \ref{conj: intro} can be viewed as a strengthening of the conjectures of \cite{1203.0039} in the case at hand.

In \S\ref{sec: conjWhit} we will reduce this conjecture, under some natural compatibility assumptions on Arthur's conjecture,
to the case where $G$ is semisimple and simply connected.



For quasi-split classical groups one may formulate Conjecture \ref{conj: intro} more concretely thanks to the work of
Cogdell--Kim--Piatetski-Shapiro--Shahidi, Ginzburg--Rallis--Soudry and others.
To that end let us recall the descent method of Ginzburg--Rallis--Soudry \cite{MR2848523}.
Let $G$ be a quasi-split classical group and $\psi_N$ as before.
Starting from a set $\{\pi_1,\dots,\pi_k\}$ of (distinct) cuspidal representations of general linear groups $\GL_{n_i}$
of certain self-duality type depending on $G$
and with $n_1+\dots+n_k=m$ where $m$ is determined by $G$, one constructs
a $\psi_N$-generic cuspidal representation $\sigma=\sigma^{\psi_N}(\{\pi_1,\dots,\pi_k\})$ of $G(\A)$.
Moreover, $\sigma$ is multiplicity free and any irreducible constituent of $\sigma$ has
the isobaric sum $\pi_1\boxplus\dots\boxplus\pi_k$ as its functorial transfer to $\GL_m$ under the natural homomorphism of $L$-groups.
In fact, combined with the results of Cogdell--Piatetski-Shapiro--Shahidi \cite{MR2767514} which unify and extend earlier work of
Cogdell--Kim--Piatetski-Shapiro--Shahidi \cite{MR1863734, MR2075885} and Kim--Krishnamurthy \cite{MR2127169, MR2149370}, it is known that \emph{all}
$\psi_N$-generic cuspidal representations of (quasi-split) classical groups are covered by descent.
In particular, one can describe $L(1,\sigma,\Ad)$ in terms of known $L$-functions of $\GL_n$.
The representation $\sigma$ is known to be irreducible for $\Orth(2n+1)$ (or equivalently, for $\SO(2n+1)$).
It is expected to be irreducible in all cases except if one views $\SO(2n)$ (rather than $\Orth(2n)$) as a classical group
(as we're forced to if we want to stick to connected groups):
in that case $\sigma$ may decompose as $\tau\oplus\theta(\tau)$ where $\tau$ is irreducible and $\theta$
is an outer involution preserving $N$ and $\psi_N$.



Conjecture \ref{conj: intro} translates into the following:

\begin{conjecture} \label{conj: globalclassical}
Let $\pi$ be an irreducible constituent of $\sigma^{\psi_N}(\{\pi_1,\dots,\pi_k\})$. Then
\[
\mainconst{\pi}=\begin{cases}2^{k-2}&\text{if $G=\SO(2n)$ and }\theta(\pi)=\pi,\\2^{k-1}&\text{otherwise.}\end{cases}
\]
\end{conjecture}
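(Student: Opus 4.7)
The plan is to deduce Conjecture \ref{conj: globalclassical} from Conjecture \ref{conj: intro} by an explicit computation of the component group $\cent_\phi$ in each classical case. By the results of Ginzburg--Rallis--Soudry and Cogdell--Piatetski-Shapiro--Shahidi recalled in the text, an irreducible constituent $\pi$ of the descent $\sigma^{\psi_N}(\{\pi_1,\dots,\pi_k\})$ is expected to be the $\psi_N$-generic representation $\pi^{\psi_N}(\phi)$ in $\canArthur_\phi$, where $\phi$ is the elliptic Arthur parameter whose functorial lift to $\GL_m$ is the isobaric sum $\pi_1\boxplus\dots\boxplus\pi_k$. Since each $\pi_i$ is cuspidal and the $\pi_i$ are pairwise distinct, $\phi$ is of Ramanujan type and multiplicity-free. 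Conjecture \ref{conj: intro} therefore yields $\mainconst{\pi}=\abs{\cent_\phi}$, and the remaining task is the group-theoretic one of computing $\cent_\phi$.

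The self-duality type imposed on each $\pi_i$ by the descent exactly matches the type preserved by $\widehat G$, so $\phi=\oplus_{i=1}^k\phi_i$ decomposes into $k$ pairwise non-isomorphic irreducible components of this type. Its centralizer in $\widehat G$ is therefore a subgroup of $\{\pm 1\}^k\cong\prod_{i=1}^k\{\pm I_{n_i}\}$, possibly cut down by a determinant condition when $\widehat G$ is special orthogonal; quotienting by $Z(\widehat G)$ produces $\cent_\phi$.

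For $G=\SO(2n+1)$, $\widehat G=\Sp(2n,\C)$ has full centralizer $\{\pm 1\}^k$ and center acting as the scalar $(-1,\dots,-1)$, giving $\abs{\cent_\phi}=2^{k-1}$. For $G=\Sp(2n)$, $\widehat G=\SO(2n+1,\C)$ has trivial center and, since $\sum n_i=2n+1$ is odd, a non-trivial determinant constraint, again yielding $2^{k-1}$. For quasi-split $G=\U(n)$ an analogous computation with $\widehat G=\GL_n(\C)\rtimes\Gal(E/F)$ once more gives $2^{k-1}$. The case $G=\SO(2n)$, $\widehat G=\SO(2n,\C)$ with center $\{\pm I\}$, splits into two subcases. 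If some $n_i$ is odd the determinant constraint $\prod\epsilon_i^{n_i}=1$ is non-trivial, so the centralizer has order $2^{k-1}$, and since the scalar $(-1,\dots,-1)$ still satisfies the constraint the quotient has order $2^{k-2}$. If instead all $n_i$ are even the constraint is vacuous, the centralizer is all of $\{\pm 1\}^k$, and the quotient has order $2^{k-1}$. In the first subcase an element of $\Orth(2n,\C)\setminus\SO(2n,\C)$ centralizes $\phi$, so $\theta$ fixes the $\SO$-equivalence class of $\phi$ and hence $\theta(\pi)=\pi$; in the second subcase no such element exists, $\theta$ moves $\phi$ to a distinct $\SO$-class, and $\theta(\pi)\neq\pi$. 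This matches precisely the dichotomy in the conjecture.

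The main obstacle is not these essentially combinatorial centralizer computations, but rather justifying the identification of the concrete $\pi$ with the hypothetical $\pi^{\psi_N}(\phi)$: one needs the expected existence, multiplicity-one, and (outside $\SO(2n)$) irreducibility of $\canArthur_\phi\cap L^2_{\cusp,\psi_N}$, together with the compatibility of the reduction to the simply connected cover carried out in \S\ref{sec: conjWhit} and the precise match between the two outer actions in the $\SO(2n)$ case. For the classical groups under consideration a substantial portion of this input is provided by the endoscopic classification of Arthur and Mok, so that, modulo these compatibilities, Conjecture \ref{conj: globalclassical} becomes a direct translation of Conjecture \ref{conj: intro}.
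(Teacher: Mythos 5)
Your overall strategy — translate Conjecture \ref{conj: intro} via a computation of $\cent_\phi$ for the parameter $\phi$ attached to $\pi_1\boxplus\dots\boxplus\pi_k$ — is exactly the route the paper takes in \S\ref{sec: classical groups}, and your centralizer counts ($2^{k-1}$ for $\SO(2n+1)$, $\Sp_n$, $\U(n)$; $2^{k-2}$ for $\SO(2n)$ when some $n_i$ is odd, $2^{k-1}$ when all $n_i$ are even) agree with the values the paper quotes from Arthur and Mok. The gap is in your last step for $\SO(2n)$, where you claim the parity dichotomy ``some $n_i$ odd vs.\ all $n_i$ even'' coincides with the dichotomy ``$\theta(\pi)=\pi$ vs.\ $\theta(\pi)\neq\pi$'' appearing in the conjecture. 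That identification fails precisely in the case the paper is most careful about: when all $n_i$ are even, the descent $\sigma^{\psi_N}(\{\pi_1,\dots,\pi_k\})$ may well be irreducible and $\theta$-invariant (the paper leaves both possibilities open), and then $\pi=\sigma$ satisfies $\theta(\pi)=\pi$ even though the fiber of $\tparams(G)\rightarrow\imgset{G}$ has two elements $\phi_1,\phi_2$ interchanged by $\theta$. In that situation $\pi$ is \emph{not} equal to either $\pi^{\psi_N}(\phi_i)$ (those two spaces are equivalent representations on which $\whit^{\psi_N}$ is nonzero, and they are swapped by $\theta$, while $\sigma$ is $\theta$-invariant), so Conjecture \ref{conj: intro} does not apply to $\pi$ directly, and your argument would either not apply or would output $2^{k-1}$, contradicting the asserted value $2^{k-2}$.

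The paper handles this case by observing that $\pi^{\psi_N}(\phi_1)\oplus\pi^{\psi_N}(\phi_2)$ is the isotypic component of $\sigma$ and that the space of $\sigma$ is the ``diagonal'' $\{\varphi_1+\varphi_2:\varphi_i\in\pi^{\psi_N}(\phi_i),\ \whit^{\psi_N}(\cdot,\varphi_1)\equiv\whit^{\psi_N}(\cdot,\varphi_2)\}$, which yields the relation $\mainconst{\sigma}=\tfrac14\bigl(\mainconst{\pi^{\psi_N}(\phi_1)}+\mainconst{\pi^{\psi_N}(\phi_2)}\bigr)$; combined with Conjecture \ref{conj: intro} this gives $\mainconst{\sigma}=\tfrac14\cdot 2\cdot 2^{k-1}=2^{k-2}$, consistent with the case $\theta(\pi)=\pi$ of the statement. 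This extra volume/inner-product computation for the diagonally embedded $\sigma$ (the analogue of which also drives the factor $\abs{X(\tilde\pi)}$ in Lemma \ref{lem: restG}) is the missing ingredient in your proposal; it is also why the paper ultimately phrases the unified statement as $\mainconst{\pi}=2^{k-1}/s$ with $s$ the order of the stabilizer of $\pi$ under $\{1,\theta\}$, rather than in terms of the parity of the $n_i$.
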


We remark that following Arthur's work \cite{Artendclass} and its follow-up by Mok \cite{1206.0882}, one expects to have multiplicity one for all classical groups
(again, with the caveat that if $\SO(2n)$ is admitted as a classical group then multiplicity could be two).
Thus, in all cases except for $\SO(2n)$ we could have formulated the conjecture for any $\psi_N$-generic representation whose functorial transfer to $\GL_m$ is
$\pi_1\boxplus\dots\boxplus\pi_k$.
At any rate, Arthur's work is not a prerequisite for the formulation of Conjecture \ref{conj: globalclassical}.
In fact, one can easily modify Conjecture \ref{conj: globalclassical} for $\Gspin$ groups using the work
of Asgari-Shahidi \cite{MR2219256, 1101.3467} and Hundley-Sayag \cite{MR2505178, 1110.6788}.

We can also formulate an analogous conjecture for the metaplectic groups $\Mp_n$ -- the two-fold cover of the symplectic groups $\Sp_n$.
(One expects multiplicity one to hold in this case as well.)
While these groups are not algebraic, they behave in many respects like algebraic groups.
In particular, the descent method applies to them (and gives rise to irreducible representations).
For the metaplectic group, Conjecture \ref{conj: globalclassical} and the relation \eqref{eq: gllc} have to be modified as follows:
\begin{conjecture} \label{conj: metplectic global}
Assume that $\tilde\pi$ is the $\psi_N$-descent of $\{\pi_1,\dots,\pi_k\}$ to $\Mp_n$.
Let $\pi$ be the isobaric sum $\pi_1\boxplus\dots\boxplus\pi_k$. Then
\[
\whit^{\psi_N}(\varphi)\whit^{\psi_N^{-1}}(\varphi^\vee)=
(2^k\vol(\Sp_n(F)\bs\Sp_n(\A)))^{-1}\Delta_{\Sp_n}^S(1)\frac{L^S(\frac12,\pi)}{L^S(1,\pi,\sym^2)}\prod_{v\in S}I_v(\varphi_v,\varphi_v^\vee).
\]
\end{conjecture}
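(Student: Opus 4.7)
The plan is to follow the strategy that (is expected to) work for the $\GL_n$ case in \S\ref{sec: GLn} and for Conjecture \ref{conj: globalclassical}, and to adapt it to the metaplectic setting by bridging through the descent construction. First I would realize $\tilde\pi$ concretely as the $\psi_N$-descent of $\{\pi_1,\dots,\pi_k\}$: following Ginzburg--Rallis--Soudry \cite{MR2848523}, cusp forms in the space of $\tilde\pi$ are obtained as Fourier coefficients (along a suitable unipotent subgroup, against a suitable character) of residues of Eisenstein series on a larger metaplectic group $\Mp_N$ induced from $\pi_1\otimes\cdots\otimes\pi_k\otimes\tau$ on an appropriate Siegel-type Levi, where $\tau$ is an exceptional representation on a smaller metaplectic factor. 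This gives an explicit integral representation of any $\varphi\in\tilde\pi$ and dually of $\varphi^\vee$.

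Next, I would insert these expressions into $\whit^{\psi_N}(\varphi)\whit^{\psi_N^{-1}}(\varphi^\vee)$ and unfold in Rankin--Selberg style. The poles of the Eisenstein series that produce the descent are governed precisely by $L^S(s,\pi,\sym^2)$, while the outer Whittaker integration introduces $L^S(\tfrac12,\pi)$ through the standard integral representation of the half-integral weight $L$-function (Shimura, Bump--Friedberg, Gelbart--Piatetski-Shapiro). Their ratio $L^S(\tfrac12,\pi)/L^S(1,\pi,\sym^2)$ is exactly what should appear in the conjectured identity. At unramified places, the local computation reduces to a Casselman--Shalika type formula on $\Mp_n$ (Kazhdan--Patterson, Bump--Friedberg, Chinta--Offen--Gao) and should identify $I_v(\varphi_v,\varphi_v^\vee)$ with the expected Euler factor $\Delta_{\Sp_n,v}(1)L_v(\tfrac12,\pi)L_v(1,\pi,\sym^2)^{-1}$ on unramified data with $(\varphi_v,\varphi_v^\vee)=1$. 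This determines both sides of the conjectured identity on an open orbit, and local multiplicity one for $\Mp_n$ (as in the derivation of \eqref{eq: gllc}) then pins down $\mainconst{\tilde\pi}$ up to a global constant independent of the vectors.

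The hard part will be twofold. Locally, one must regularize the non-convergent integrals $I_v$ when $\tilde\pi_v$ is not square-integrable and show that the regularization is compatible with both the descent and the factorization implicit in \eqref{eq: gllc}; this is the same local analytic issue that must be confronted in the algebraic case and seems to require a separate, technical argument at ramified places. Globally, the coefficient $2^k$ (rather than $2^{k-1}$ or $2^{k-2}$ as in Conjecture \ref{conj: globalclassical}) is the most delicate point, since it should be the precise size of the packet/fiber of the descent: one expects to trace it to a combinatorial count of the $\psi_N$-characters on a maximal unipotent of $\Mp_n$ that are compatible with the isobaric datum $\{\pi_1,\dots,\pi_k\}$, analogous to how the component group $\cent_\phi$ produces the factors in the algebraic case.

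As a consistency check, the $n=1$ specialization of the conjecture should recover Waldspurger's classical formulas \cite{MR783511, MR646366} relating Fourier coefficients of half-integral weight cusp forms to central values of the Shimura lift, and the conjecture's prediction $\whit^{\psi_N}(\varphi)\whit^{\psi_N^{-1}}(\varphi^\vee)\sim L^S(\tfrac12,\pi)$ up to local data and a power of $2$ matches Waldspurger's formula up to normalizations; this low-rank verification would both calibrate the constants and provide strong evidence that the proposed strategy yields the correct identity.
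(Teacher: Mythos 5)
The statement you are addressing is a \emph{conjecture}: the paper does not prove it, and in fact explicitly defers even its reduction to a local identity to a sequel. What the paper does is formulate it as the metaplectic translation of Conjecture \ref{conj: intro}, using the definition \eqref{eq: metacpi} of $\mainconst{\tilde\pi}$ (whose shape, including the factor $L_\psi^S(\frac12,\tilde\pi)$, comes from the unramified computation \eqref{eq: metunram} via the Bump--Friedberg--Hoffstein metaplectic Casselman--Shalika formula) together with the expectation $\mainconst{\tilde\pi}=2^{k-1}$ from the descent/Arthur-parameter analogy of \S\ref{sec: classical groups}. Your proposal is therefore not comparable to a proof in the paper; judged on its own terms it is a research plan rather than an argument. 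The two steps you yourself flag as ``the hard part'' are precisely the content of the conjecture: the unfolding of the descent integral representation against the Whittaker coefficient has not been carried out, the regularization and factorization of the local integrals $I_v$ at ramified and archimedean places is exactly the local conjectural identity the authors postpone, and no mechanism is given that would actually produce the global constant. So there is a genuine gap: nothing in the proposal pins down the constant or establishes the identity for general vectors $\varphi,\varphi^\vee$; local multiplicity one only tells you the two sides are proportional, which is already the content of \eqref{eq: metacpi}.

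One concrete correction to your reading of the constant: the factor $2^k$ is \emph{not} the size of a packet or a count of compatible characters on $N$. In the paper's normalization it is $\mainconst{\tilde\pi}\cdot\vol(\Sp_n(F)\bs\Mp_n(\A))=2^{k-1}\cdot 2\,\vol(\Sp_n(F)\bs\Sp_n(\A))$, i.e.\ the expected value $2^{k-1}$ of $\mainconst{\tilde\pi}$ (the analogue of $\abs{\cent_\phi}$ for a parameter with $k$ self-dual cuspidal constituents, as in Conjecture \ref{conj: globalclassical} with trivial $\theta$) times a factor $2$ coming purely from the convention $\vol(\Sp_n(F)\bs\Mp_n(\A))=2\vol(\Sp_n(F)\bs\Sp_n(\A))$. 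Any attempted proof must account for the constant in this way; a ``combinatorial count of $\psi_N$-characters'' yielding $2^k$ directly would be aiming at the wrong target. Your $n=1$ consistency check against Waldspurger is reasonable as evidence (and is the motivation cited in the paper), but calibrating constants against the $n=1$ case is not a substitute for the missing local identity and the global comparison, which is where the actual work lies.
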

We note that in the case of $\Mp_n$, the image of the $\psi_N$-descent consists of the cuspidal $\psi_N$-generic spectrum whose
$\psi$-theta lift to $\SO(2n-1)$ vanishes where $\psi$ is determined by $\psi_N$. (See \cite[\S11]{MR2848523} for more details.)
In the case $n=1$, this excludes the so-called exceptional representations.

The case of the metaplectic two-fold cover of $\SL_2$ (i.e., $n=1$) goes back to the classical result of Waldspurger on the Fourier coefficients
of half-integral weight modular forms \cite{MR646366} which was later generalized by many authors
\cite{MR894322, MR1244668, MR629468, MR1233447, MR783554, MR1404335, MR2059949, MR2669637, 1308.2353}.
Waldspurger used the Shimura correspondence as his main tool.
While it is conceivable that in general, the theta correspondence will reduce the conjecture for the metaplectic group
to the case of $\SO(2n+1)$, this will not suffice by itself to prove the conjecture.

A different approach, which was taken by Jacquet \cite{MR983610} and completed by Baruch--Mao (for $n=1$) \cite{MR2322488} is via the relative trace formula.
An important step in generalizing this approach to higher rank was taken by Mao--Rallis \cite{MR2656089}.
We mention in passing an exciting new result by Wei Zhang \cite{WeiZhang} on the Gan--Gross--Prasad conjecture for unitary groups,
which uses an analogous relative trace formula conceived by Jacquet--Rallis \cite{MR2767518}.


Let us describe the contents of the paper.

In \S\ref{sec: FCMC} we define the local integrals \eqref{eq: Fourier MC} in the $p$-adic case
as stable integrals, in the sense that the integral over a sufficiently large compact open subgroup $U$ of $N$
is independent of $U$. This is closely related to the situation in \cite{MR581582}.
The integrals \eqref{eq: Fourier MC} are compatible with the Jacquet integral and parabolic induction.
In particular, we can compute them in the unramified case using the Casselman--Shalika formula.
In the Archimedean case we give an ad hoc definition for \eqref{eq: Fourier MC}, using the results of \cite[Ch. 15]{MR1170566}.

This is used in \S\ref{sec: conjWhit} to introduce $\mainconst{\pi}$ -- the main object of interest of this paper.
We formulate Conjecture \ref{conj: intro} and show that it is compatible with restriction to subgroups containing the derived group
as well as with projection by a central induced torus.
Next we show that $\mainconst{\pi}=1$ in the case of the general linear group using Rankin--Selberg integrals (\S\ref{sec: GLn}).
Consequently, Conjecture \ref{conj: intro} holds for both the general and the special linear group.
We consider classical groups and the metaplectic group more closely in \S\ref{sec: classical groups} where we formulate Conjectures \ref{conj: globalclassical} and \ref{conj: metplectic global}.
Finally, in \S\ref{sec: examples} we explicate and prove certain low rank cases of Conjecture \ref{conj: globalclassical}.
These cases boil down to a relation (for small $n$) between a certain group of self-twists of a representation of $\pi$ of $\GL_n(\A)$ and the isobaric decomposition
of the functorial transfer under a certain representation of $\GL_n(\C)$.

In a sequel to this paper, we will reduce conjectures \ref{conj: globalclassical} and \ref{conj: metplectic global}
to a local conjectural identity which will be proved for the metaplectic group in the $p$-adic case.

\subsection{Acknowledgement}
It is a pleasure to acknowledge the contribution of several mathematicians to this paper.
First, we thank Joseph Bernstein and Herv\'e Jacquet for discussions leading to
Propositions \ref{prop: Bernstein} and \ref{prop: Jacquetprop} respectively.
Conversations with Yiannis Sakellaridis and Akshay Venkatesh were very beneficial towards the formulation
of Conjecture \ref{conj: intro}.
We thank David Soudry for explaining to us many fine points about the descent method on numerous occasions.
We thank Jean-Pierre Labesse for correspondence which lead to Appendix \ref{sec: appendix}.
We thank Michail Borovoi, Philippe Gille and Diana Shelstad for their help with Galois cohomology,
Vincent Lafforgue for expounding on the material of \cite{1209.5352},
Kaoru Hiraga for clarifying some aspects of the book \cite{MR2918491}
and Freydoon Shahidi for answering a question about \cite{MR2784745}.
We would also like to thank Wee Teck Gan, Atsushi Ichino and Omer Offen for helpful discussions and suggestions.
We thank the anonymous referee for his careful reading of an earlier version of the paper and for making constructive comments.
We thank Joachim Schwermer and the Erwin Schr\"odinger Institute in Vienna for their support and for providing excellent working conditions
for collaboration.

We cannot close this introduction without recalling our dear mentor Steve Rallis.
Both authors were privileged to benefit from his guidance as post-docs at the Ohio State University.
His advice and passion for mathematics made a deep impact on us.
The subject matter of this paper was close to Steve's heart.
We hope that it is suitable to dedicate this work to his memory.


\section{Fourier coefficients of matrix coefficients} \label{sec: FCMC}

Let $\bf G$ be a quasi-split reductive group over a local field $F$ of characteristic $0$.
If $\bf X$ is a smooth variety over $F$ and $S$ is an $F$-algebra, we use $X(S)$ to denote the $S$-points of $\bf X$,
or simply $X$ to denote its $F$-points.
We write $C^\infty(X)$ for the space of smooth functions on $X$.
(In the $p$-adic case, this means the locally constant functions on $X$.)
We also write $C_c^\infty(X)$ for the space of compactly supported smooth functions on $X$.


Let $\bf A$ be a maximal $F$-split torus of $\bf G$, ${\bf T}=C_{\bf G}({\bf A})$ (a maximal torus of $\bf G$, since $\bf G$ is quasi-split) and
$\bf B=\bf T\ltimes \bf N$ a Borel subgroup containing $\bf A$ (defined over $F$).
We denote by $\Phi$ the set of roots of $\bf A$, and by $\Phi_+$ (resp.~$\Delta_0$) the subset of
positive indivisible (resp.~simple) roots with respect to $\bf B$.
For any $\alpha\in\Phi_+$ let $\bf N_\alpha$ be the subgroup of $\bf N$ whose Lie algebra is the direct sum of the
weight spaces corresponding to roots of $\bf T$ (over the algebraic closure of $F$) whose restriction to $\bf A$
is a multiple of $\alpha$.
Let $W=\operatorname{Norm}_{G}(T)/T$ be the Weyl group of $G$ and $w_0$ the longest element of $W$.

We fix a non-degenerate (continuous) character $\psi_N:N\rightarrow\C^*$, that is $\psi_N\big|_{N_\alpha}\not\equiv1$
for every $\alpha\in\Delta_0$.
For any subgroup $N'$ of $N$ we denote the restriction of $\psi_N$ to $N'$ by $\psi_{N'}$.

By a representation of $G$, we will always mean a smooth representation $(\pi,V)$ in the $p$-adic case
(with the discrete topology on $V$)
and a smooth Fr\'echet representation $(\pi,V)$ of moderate growth in the archimedean case.
If $(\pi,V)$ is a representation of finite length, we write $(\pi^\vee,V^\vee)$ for the contragredient representation.
Let $(\cdot,\cdot)=(\cdot,\cdot)_\pi$ be the canonical pairing on $V\times V^\vee$.
For any pair $v\in V$, $v^\vee\in V^\vee$ we define the matrix coefficient $\mc_{v,v^\vee}(g)=(\pi(g)v,v^\vee)_{\pi}$.

\label{sec: irrnot}
We denote by $\Irr G$ the set of equivalence classes of irreducible representations of $G$.
Recall that $\pi\in\Irr G$ is called \emph{square-integrable} if its cental character $\omega_\pi$ is unitary and any matrix coefficient
lies in $L^2(Z\bs G,\omega_\pi)$ where $\bf Z$ is the center of $\bf G$.
(The notation stands for the space of functions on $G$ which are $(Z,\omega_\pi)$-equivariant and which are square-integrable
modulo $Z$.)
We say that $\pi\in\Irr G$ is essentially square-integrable if some twist of $\pi$ by a (not necessarily unitary) character of $G$ is square-integrable.
We denote by $\Irr_{\sqr}G$ the class of essentially square-integrable irreducible representations.
In the $p$-adic case we will also write $\Irr_{\cusp}G$ for the set of supercuspidal representations in $\Irr G$.
If $\pi\in\Irr_{\sqr}G$ then any matrix coefficient of $\pi$ belongs to the Harish-Chandra Schwartz space of the derived group
of $G$ (\cite[Corollaire III.1.2]{MR1989693} -- $p$-adic case, \cite[Theorem 15.2.4]{MR1170566} -- archimedean case),
and in particular, it is integrable over $N$
(\cite[Proposition II.4.5]{MR1989693}, \cite[Thereom 7.2.1]{MR929683}).
This is not true for a general $\pi\in\Irr G$. The goal of this section is to make sense of the integral
\[
\int_N\mc_{v,v^\vee}(n)\psi_N(n)^{-1}\ dn
\]
when it does not converge absolutely.

Let $\JF_{\psi_N}(\pi)=\JF_{\psi_N}^G(\pi)$ be the twisted Jacquet module of $\pi$, namely, the quotient of $\pi$
by the closure of the span of $\pi(n)v-\psi_N(n)v$, $u\in N$, $v\in V_\pi$.
In the $p$-adic case $\pi\mapsto\JF_{\psi_N}(\pi)$ is an exact functor.
We say that $\pi$ is $\psi_N$-generic if $\JF_{\psi_N}(\pi)$ is nontrivial,
in which case it is one dimensional (if $\pi\in\Irr G$).
We denote by $\Irr_{\gen,\psi_N}G$ the set of equivalence classes of irreducible representations
which are $\psi_N$-generic.

\subsection{}
We start with the $p$-adic case.
Until further notice $F$ will be a $p$-adic field with ring of integers $\OO$.
For any group $\bf H$ over $F$ denote by $\csgr(H)$ the set of compact open subgroups of $H$.
Suppose that $U$ is a unipotent group over $F$ with a fixed Haar measure $du$.
Recall that the group generated by a relatively compact subset of $U$ is relatively compact.
In particular, the set $\csgr(U)$ is directed.

\label{sec: stint}
\begin{definition} \label{def: stable integral}
Let $f$ be a smooth function on $U$.
We say that $f$ has a \emph{stable integral} over $U$ if there exists $U_1\in\csgr(U)$ such that for any
$U_2\in\csgr(U)$ containing $U_1$ we have
\begin{equation} \label{eq: comval}
\int_{U_2}f(u)\ du=\int_{U_1}f(u)\ du.
\end{equation}
In this case we write $\stint_U f(u)\ du$ for the common value \eqref{eq: comval} and say that
$\stint_U f(u)\ du$ stabilizes at $U_1$.
In other words, $\stint_U f(u)\ du$ is the limit of the net $(\int_{U_1}f(u)\ du)_{U_1\in\csgr(U)}$
with respect to the discrete topology of $\C$.
\end{definition}

\begin{remark} \label{rem: basicstable}
\begin{enumerate}
\item Clearly, if $f\in C_c^\infty(U)$ then $f$ has a stable integral.
\item More generally, let $R$ (resp., $L$) be the right (resp., left) regular representation of $U$ on $C^\infty(U)$.
We extend $R$ and $L$ to representations of the algebra of finite Borel measures of $U$ with compact support.
Suppose that there exist $U_1,U_2\in\csgr(U)$ such that $R(e_{U_1})L(e_{U_2})f\in C_c^\infty(U)$ where $e_{U_i}$
is the Haar measure on $U_i$ with volume $1$, $i=1,2$.
Then $f\in C^\infty(U)$ has a stable integral over $U$ and
\[
\stint_U f(u)\ du=\int_U[R(e_{U_1})L(e_{U_2})f](u)\ du.
\]
In this case, we will say that $f$ is \emph{compactly supported after averaging}.
\item It is not true in general that if $f\in L^1(U)\cap C^\infty(U)$ then $f$ has a stable integral.
However, if it does, then the stable integral is equal to $\int_Uf(u)\ du$.
\item If $f$ has a stable integral over $U$, then any right or left translate of
$f$ by an element of $U$ has a stable integral (with the same value).
\item \label{part: autstable} Similarly, if $\alpha$ is an automorphism of $U$ and $\stint_Uf(u)\ du$ is defined then
$\stint_Uf(\alpha(u))\ du$ is defined and equals to $m_\alpha^{-1}\stint_Uf(u)\ du$ where $m_\alpha$
is the module of $\alpha$.
\end{enumerate}
\end{remark}

\begin{proposition} \label{prop: stablemc}
Let $(\pi,V)\in\Irr G$.
Then for any $v\in V$, $v^\vee\in V^\vee$ the function $\psi_N^{-1}\cdot\mc_{v,v^\vee}\big|_N$ is compactly supported after averaging and hence
has a stable integral over $N$.
Moreover, if $K_0\in\csgr(G)$ and $v\in V^{K_0}$, $v^\vee\in (V^\vee)^{K_0}$ then
\[
(v,v^\vee)_\pi^{\psi_N}:=\stint_{N(F)}(\pi(n)v,v^\vee)_\pi\psi_N(n)^{-1}\ dn
\]
stabilizes at $U_1\in\csgr(N)$ depending only on $K_0$.
The bilinear form $(v,v^\vee)_\pi^{\psi_N}$
is $(N,\psi_N)$-equivariant in $v$ and $(N,\psi_N^{-1})$-equivariant in $v^\vee$.
Thus, $(v,v^\vee)_\pi^{\psi_N}\equiv0$ unless $\pi\in\Irr_{\gen,\psi_N}G$, in which case
$(\cdot,\cdot)_\pi^{\psi_N}$ descends to a non-degenerate pairing (denoted the same way)
between the one-dimensional spaces $\JF_{\psi_N}(\pi)$ and $\JF_{\psi_N^{-1}}(\pi^\vee)$.
\end{proposition}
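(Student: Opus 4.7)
The plan is to show that $f(n):=\psi_N(n)^{-1}\mc_{v,v^\vee}(n)$ is compactly supported after averaging (in the sense of Remark~\ref{rem: basicstable}(2)), from which the existence of $\stint_N f$ follows automatically. A direct manipulation of Haar measures shows that for any $U_1,U_2\in\csgr(N)$,
\[
(R(e_{U_1})L(e_{U_2})f)(n)=\psi_N(n)^{-1}(\pi(n)\tilde v,\tilde v^\vee),
\]
where $\tilde v:=\int_{U_1}\psi_N(u)^{-1}\pi(u)v\,du$ satisfies $\pi(u_0)\tilde v=\psi_N(u_0)\tilde v$ for $u_0\in U_1$, and $\tilde v^\vee:=\int_{U_2}\psi_N(u)\pi^\vee(u)v^\vee\,du$ satisfies $\pi^\vee(u_0)\tilde v^\vee=\psi_N(u_0)^{-1}\tilde v^\vee$ for $u_0\in U_2$. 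This averaged function is bi-$(U_2,U_1)$-invariant on $N$, so the task reduces to showing that its support on $U_2\bs N/U_1$ is finite once $U_1,U_2$ are sufficiently large, with a bound depending only on $K_0$.

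The main obstacle is to establish precisely this finite-support property, uniformly over the subspace $V^{K_0}\times(V^\vee)^{K_0}$. I would fix a filtration $N=N^{(0)}\supset N^{(1)}\supset\dots\supset N^{(r)}=\{e\}$ by normal subgroups with one-dimensional successive quotients $N^{(i)}/N^{(i+1)}\cong(F,+)$, refining the descending central series and ordered so that $\psi_N$ restricts to a non-trivial character on the quotients associated to simple roots, and then argue inductively along this filtration. At each one-dimensional step the key input is the classical characterization of twisted coinvariants, in the spirit of Jacquet and closely related to \cite{MR581582}: a smooth vector $w$ in a smooth module of $(F,+)$ with non-trivial character $\chi$ vanishes in the coinvariants $W_{(F,\chi)}$ if and only if $\int_U \chi(u)^{-1}\pi(u)w\,du=0$ for some $U\in\csgr(F)$. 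Admissibility of $\pi$ lets one choose such a $U$ uniformly on the finite-dimensional subspace $V^{K_0}$, which is the source of the dependence of $U_1$ only on $K_0$; piecing the one-dimensional steps together along the filtration yields the claim.

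The remaining assertions follow more routinely. Given $n_0\in N$, the right translate of $f$ by $n_0$ is $\psi_N(n_0)^{-1}$ times the function obtained by replacing $v$ with $\pi(n_0)v$, so Remark~\ref{rem: basicstable}(4) gives $(\pi(n_0)v,v^\vee)_\pi^{\psi_N}=\psi_N(n_0)(v,v^\vee)_\pi^{\psi_N}$; left translation treats the $v^\vee$ side symmetrically. Hence $(\cdot,\cdot)_\pi^{\psi_N}$ vanishes on the kernel of $V\to\JF_{\psi_N}(\pi)$ in the first argument and on the kernel of $V^\vee\to\JF_{\psi_N^{-1}}(\pi^\vee)$ in the second, so it vanishes identically unless $\pi\in\Irr_{\gen,\psi_N}G$, in which case it descends to a pairing of one-dimensional Jacquet modules. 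For non-degeneracy of the descended pairing, one chooses $v,v^\vee$ with non-zero Jacquet-module images, so that $[\tilde v]\neq 0$ and $[\tilde v^\vee]\neq 0$ for large $U_1,U_2$, and matches $(v,v^\vee)_\pi^{\psi_N}$ with the pairing between $\JF_{\psi_N}(\pi)$ and $\JF_{\psi_N^{-1}}(\pi^\vee)$ induced from the canonical pairing via Casselman's duality; non-degeneracy then follows from local multiplicity one for the Whittaker model of $\pi$ and $\pi^\vee$.
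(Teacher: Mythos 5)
Your reduction of the problem to showing that the two-sided average $R(e_{U_1})L(e_{U_2})f$, which is the matrix coefficient of the averaged vectors $\tilde v,\tilde v^\vee$, has finite support on $U_2\bs N/U_1$ is correct, and your treatment of the equivariance and of the descent to the Jacquet modules is fine. But the central claim — vanishing of $(\pi(n)\tilde v,\tilde v^\vee)$ for $n$ outside a compact set — is exactly what is never proved. The tools you invoke (a filtration of $N$ with one-dimensional quotients, the criterion that a vector dies in the $(F,\chi)$-coinvariants if and only if some finite average kills it, and admissibility for uniformity on $V^{K_0}$) only concern the abstract structure of $\pi\big|_N$, and they cannot produce decay of the matrix coefficient along $N$: the translate $\pi(n)\tilde v$ does \emph{not} vanish in $\JF_{\psi_N}(\pi)$ (its image is $\psi_N(n)$ times that of $\tilde v$, hence nonzero in the generic case), so no appeal to the coinvariant-detection lemma can force $(\pi(n)\tilde v,\tilde v^\vee)=0$ for large $n$. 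Already in the base case $G=\SL_2$, where $N\cong F$ and your filtration is trivial, the sketch gives no argument at all. The actual mechanism uses $\pi$ as a $G$-module: in the paper one realizes $\pi$ inside $\Ind_P^G\sigma$ with $\sigma$ supercuspidal, proves (by induction on Bruhat cells, following Casselman--Shalika) that after a suitable $(N,\psi_N)$-average a section is supported on the big cell $Pw_MU'$, and then uses the compact support modulo the center of $M$ of matrix coefficients of $\sigma$ to confine the integral; the uniformity in $K_0$ also uses the finiteness of supercuspidal data of a given level, not merely admissibility. None of this geometry (how $N$ sits in $G$, conjugation of the averaging subgroups across cells) appears in your inductive scheme, so "piecing the one-dimensional steps together" is a genuine gap rather than a routine verification.

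The non-degeneracy argument is also not an argument: knowing $[\tilde v]\ne0$ and $[\tilde v^\vee]\ne0$ does not give $(v,v^\vee)_\pi^{\psi_N}\ne0$ unless you already know the pairing is nonzero, and there is no "Casselman duality" pairing between the twisted Jacquet modules $\JF_{\psi_N}(\pi)$ and $\JF_{\psi_N^{-1}}(\pi^\vee)$ to compare with — Casselman's pairing concerns Jacquet modules along parabolic subgroups. Local multiplicity one only yields uniqueness up to scalar, not non-vanishing. In the paper this point is isolated as a separate statement (Proposition \ref{prop: nontrivpsi_N}) and is proved by combining the compatibility of $(\cdot,\cdot)^{\psi_N}$ with the Jacquet integral (Proposition \ref{prop: jacquetdescent}) with the known non-vanishing in the supercuspidal case (\cite[Lemma 1.1]{MR729755}, or the proof of \cite[Lemma 3]{MR1816039}); some input of this kind is unavoidable and is missing from your proposal.
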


We will prove the proposition below.
Note that if $\pi\in\Irr_{\sqr}$ then
\[
(v,v^\vee)_\pi^{\psi_N}=\int_{N(F)}(\pi(n)v,v^\vee)\psi_N(n)^{-1}\ dn.
\]

\begin{remark} \label{rem: anydual}
Suppose that we are given $\pi, \hat\pi\in\Irr G$ with a non-degenerate pairing $(\cdot,\cdot)$ between them.
Then, by identifying $\hat\pi$ with $\pi^\vee$ we can make sense of $(\cdot,\cdot)^{\psi_N}$ in this context.
\end{remark}

\begin{remark}
For a different approach to define $(\cdot,\cdot)^{\psi_N}$ (at least in the tempered case), see \cite{1203.0039}.
\end{remark}

\subsection{}
In order to prove Proposition \ref{prop: stablemc} we will first need an auxiliary result which is based on \cite{MR581582}.
Let $P=M\ltimes U$ be a standard parabolic subgroup of $G$ with its standard Levi decomposition.
Let $P'=M'\ltimes U'$ be the standard parabolic subgroup of $G$ which is conjugate to the parabolic subgroup opposite to $P$.
Denote by $W^M$ the Weyl group of $M$ and by $w_0^M$ the longest element in $W^M$.
We identify $W^M\bs W$ with the set of left $W^M$-reduced elements of $W$.
Denote by $w_M=w_0^Mw_0$ the longest element of $W^M\bs W$, so that $w_M^{-1}Mw_M=M'$.
If $\sigma$ is a representation of $M$ then we write $\Ind_P^G\sigma=\Ind\sigma$ for the (normalized) parabolic induction.

Recall the Bruhat decomposition
\[
G=\cup_{w\in W^M\bs W}Pw N.
\]
Also recall the Bruhat order on $W^M\bs W$ defined by
$w_1\le w_2$ whenever $Pw_1N$ is contained in the closure of $Pw_2N$.

We denote by $(\Ind_P^G\sigma)^\circ$ the $P'$-invariant space of sections in $\varphi$ which are
supported on the big cell $Pw_MP'=Pw_MN=Pw_MU'$.
Note that for any $\varphi\in (\Ind_P^G\sigma)^\circ$ the function $\varphi(w_M\cdot)$ on $U'$
is compactly supported.

\begin{lemma} \label{lem: suppbc}
Suppose that $\sigma$ is a representation of $M$ and $\pi=\Ind_P^G\sigma$.
Then for any $\varphi\in\pi$ there exists $N_1\in\csgr(N)$
such that $\varphi_{N_1,\psi_{N_1}}:=\pi(\psi_{N_1}^{-1}e_{N_1})\varphi\in(\Ind\sigma)^\circ$.
Moreover, let $K_0\in\csgr(G)$ and assume that $\varphi\in\pi^{K_0}$. Then we can choose $N_1$
above depending only on $K_0$, and the support of $\varphi_{N_1,\psi_{N_1}}$ on $w_MU'$ is bounded
in terms $K_0$ only.
\end{lemma}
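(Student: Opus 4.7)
The plan is to use the Bruhat decomposition $G=\sqcup_{w\in W^M\bs W}PwN$ together with the non-degeneracy of $\psi_N$ to show that averaging $\varphi$ against $\psi_{N_1}^{-1}$ over a sufficiently large $N_1\in\csgr(N)$ annihilates the contributions from every Bruhat cell except the big cell $Pw_MN=Pw_MU'$, and simultaneously concentrates the support on the big cell into a compact region controlled by $K_0$.

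First I would verify vanishing off the big cell. For $g=pwn_0$ with $w\ne w_M$, write
\[
(\pi(\psi_{N_1}^{-1}e_{N_1})\varphi)(g)=\int_{N_1}\psi_N(n_1)^{-1}\varphi(gn_1)\,dn_1.
\]
By right $K_0$-invariance of $\varphi$, the integrand is invariant under the compact open subgroup $H_g:=N_1\cap g^{-1}K_0g$, so the integral factors through $N_1/H_g$ with inner character integral $\int_{H_g}\psi_N(h)^{-1}\,dh$. The key structural input is: for $w\in W^M\bs W$ distinct from the longest element $w_M$, there exists a positive root $\alpha$ not lying in $M$ with $w\alpha>0$; consequently $N_\alpha\subseteq w^{-1}Pw\cap N$. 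Upon enlarging $N_1$, the stabilizer $H_g$ contains a conjugate of a large compact piece of $N_\alpha$, and the non-degeneracy of $\psi_N$ (combined with commutator relations to propagate from simple to general positive roots where needed) forces $\psi_N|_{H_g}$ to be nontrivial, so the integral vanishes.

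The main obstacle is uniformity: $N_1$ must depend only on $K_0$, not on the vector $\varphi$, the cell $w$, or the point $g$ within the cell. Uniformity in $\varphi\in\pi^{K_0}$ is automatic. Uniformity in $w$ follows from the finiteness of $W^M\bs W$, so one takes the maximum choice over the finitely many cells. Uniformity in $g$ is the delicate piece: as $g=pwn_0$ ranges over a Bruhat cell, the stabilizer $H_g$ varies, but after using left $P$-equivariance of $\varphi$ (absorbing $p$ into the $\sigma$-value) and observing that the integrand depends on $n_0$ only modulo a $K_0$-determined subgroup of $N$, the problem reduces to finitely many essentially different stabilizers, handled together.

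Finally, compact support on the big cell $w_MU'$ follows from an analogous character-averaging argument applied on the other side: writing elements of the big cell as $pw_Mu'$, a Bruhat-exchange relation between $U'$ and $N$ converts the averaging integral into one whose integrand is forced to vanish for $u'$ outside a compact region determined by $K_0$ and $N_1$. This part is essentially Casselman's analysis of Whittaker-type integrals in \cite{MR581582}, to which the paper explicitly refers, and it supplies both the existence of a finite support and the quantitative bound in terms of $K_0$ only.
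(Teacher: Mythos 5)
Your overall strategy---Bruhat decomposition, kill the small cells by character averaging, concentrate the big cell---is the right one, but the mechanism you propose does not actually produce the vanishing, and the structural claims you rely on are not quite correct. You factor the averaging integral through $N_1/H_g$ with $H_g=N_1\cap g^{-1}K_0g$ and try to kill an inner integral $\int_{H_g}\psi_N^{-1}$. But right $K_0$-invariance of $\varphi$ makes the integrand $n_1\mapsto\psi_N(n_1)^{-1}\varphi(gn_1)$ right-invariant under $N_1\cap K_0\cap\ker\psi_N$ (which is independent of $g$ and gives a nonzero volume), not under $g^{-1}K_0g\cap N_1$; this is the wrong group and the integral does not factor as claimed. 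The actual engine is the left $U$-equivariance of $\varphi\in\Ind_P^G\sigma$: one factors the integral through $N_2\cap w^{-1}Uw$ so that $\varphi(wn'n)=\varphi(wn)$ for $n'\in N\cap w^{-1}Uw$, and then the inner integral is $\int_{N_2\cap w^{-1}Uw}\psi_N^{-1}=0$. Correspondingly your combinatorial input is too weak: taking $\alpha$ a positive root not in $\Phi_M$ with $w\alpha>0$ gives only $N_\alpha\subset w^{-1}Pw\cap N$, but you need $\alpha\in\Delta_0$ simple (since $\psi_N$ is a character of $N$, it is trivial on every non-simple $N_\alpha$---there is no ``commutator propagation'' to save you) and you need $w\alpha$ to lie in the root system of $U$ (not merely to be positive), so that $wN_\alpha w^{-1}\subset U$ and left-$U$-invariance can be used; if $w\alpha$ were a root of $M$ instead, conjugation would contribute a $\sigma$-twist rather than factoring out.

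The more serious gap is the absence of the induction on the Bruhat order. To make the vanishing on $PwN$ uniform in the point of the cell, the paper uses that $\varphi$ already vanishes on $Pw'N$ for all $w'<w$; this forces $\varphi(w\cdot)$ to be compactly supported on $N$ modulo $N_w=N\cap w^{-1}Nw$, which is exactly what lets one pick a compact $\Omega$ and a single $N_1\supset\bigcup_{u\in\Omega}u^{-1}N_2u$ that makes \eqref{eq: vancond} hold throughout the cell. Your proposal treats cells independently, so this compactness is simply not available, and the claim that the problem ``reduces to finitely many essentially different stabilizers'' cannot be carried out. The compact support on the big cell $w_MU'$ is then automatic from the same topological observation once all smaller cells have been annihilated---there is no separate ``character averaging applied on the other side.''
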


\begin{proof}
This is proved exactly as in \cite[Lemma 2.2]{MR581582}.

We show by induction on $\ell(w)$ that for any $w\in W^M\bs W$ there exists
$N_1\in\csgr(N)$ such that $\varphi_{N_1,\psi_{N_1}}^{}$ vanishes on $\cup_{w'<w}Pw'N$.
For $w=w_M$ we will obtain the lemma.

The base of the induction ($w=e$) is the empty statement.
Note that if $\varphi_{N_1,\psi_{N_1}}^{}$ vanishes on $Pw'N$ then the same holds
for any $N_2\in\csgr(N)$ containing $N_1$.
Therefore, for the induction step it will be enough to show the following statement for any $w\ne w_M$.
\begin{multline*}
\text{If $\varphi\rest_{\cup_{w'<w}Pw'N}\equiv0$ then we can choose $N_1\in\csgr(N)$
such that $\varphi_{N_1,\psi_{N_1}}^{}\rest_{PwN}\equiv0$.}
\end{multline*}
Since $w\ne w_M$, there exists $\alpha\in\Delta_0$ such that $w\alpha$ is a root of $A$ in the Lie algebra of $U$.
Therefore $N_\alpha\subset N\cap w^{-1}Uw$ and $\psi_{N\cap w^{-1}Uw}\not\equiv1$.
Let $N_2\in\csgr(N)$ be sufficiently large so that $\psi_{N_2\cap w^{-1}Uw}\not\equiv1$.
Then
\begin{align*}
\varphi_{N_2,\psi_{N_2}}^{}(w)&=\int_{N_2}\varphi(wn)\psi_{N_2}(n)^{-1}\ dn\\&=
\int_{N_2\cap w^{-1}Uw\bs N_2}\int_{N_2\cap w^{-1}Uw}\varphi(wn'n)\psi_{N_2}(n')^{-1}\psi_{N_2}(n)^{-1}\ dn'\ dn\\&=
\int_{N_2\cap w^{-1}Uw\bs N_2}\varphi(wn)\psi_{N_2}(n)^{-1}\int_{N_2\cap w^{-1}Uw}\psi_{N_2}(n')^{-1}\ dn'\ dn=0.
\end{align*}
It follows that $\varphi_{N_1,\psi_{N_1}}^{}(wu)=(R(u)\varphi)_{uN_1u^{-1},\psi_{uN_1u^{-1}}}^{}(w)=0$
for any $N_1\in\csgr(N)$ and $u\in N$ such that
\begin{equation} \label{eq: vancond}
\psi_{uN_1u^{-1}\cap w^{-1}Uw}\not\equiv1.
\end{equation}
Clearly, the condition \eqref{eq: vancond} is right $N_1$-invariant in $u$.
It is also left $N_w$-invariant where $N_w=N\cap w^{-1}Nw$ since $N_w$ normalizes $w^{-1}Uw$.
By assumption, the support of $\varphi(w\cdot)$ on $N$ is compact modulo $N_w$.
Choose a compact subset $\Omega\subset N$ such that the above support is contained in $N_w\Omega$.
Choose $N_1\in\csgr(N)$ containing $\cup_{u\in\Omega}u^{-1}N_2u$.
Thus, \eqref{eq: vancond} holds for $u\in\Omega$. Hence it holds for $u\in N_w\Omega N_1$.
Thus $\varphi_{N_1,\psi_{N_1}}^{}$ vanishes on $wN_w\Omega N_1$.
On the other hand, $\varphi_{N_1,\psi_{N_1}}^{}$ vanishes on $wN\setminus wN_w\Omega N_1$
by the support condition on $\varphi$. We conclude that $\varphi_{N_1,\psi_{N_1}}^{}$
vanishes on $wN$ and hence on $PwN$, as required.

For the second statement we just need to observe that in the argument above, we can choose $\Omega$ to depend only on $K_0$.
\end{proof}

\begin{proof}[Proof of Proposition \ref{prop: stablemc}]
By Jacquet's subrepresentation theorem, it is enough to consider
the case where $\pi=\Ind_P^G\sigma$ (not necessarily irreducible) and $\sigma$ is a
supercuspidal representation of $M$ (not necessarily unitary).
We identify $\pi^\vee$ with $\Ind_P^G\sigma^\vee$ via the pairing
\begin{equation} \label{eq: indpairing}
(\varphi,\varphi^\vee)_\pi=\int_{P\bs G}(\varphi(g),\varphi^\vee(g))_\sigma\,dg.
\end{equation}
We will take the `measure' on $P\bs G$ by fixing a Haar measure on $U'$ and defining
\begin{equation} \label{eq: measPG}
\int_{P\bs G}f(g)\ dg=\int_{U'}f(w_Mu)\ du
\end{equation}
for any continuous function $f$ on $G$ satisfying
$f(pg)=\modulus_P(p)f(g)$ for any $p\in P$, $g\in G$ where $\modulus_P$ is the modulus function of $P$.
Let $\varphi\in\pi$, $\varphi^\vee\in\pi^\vee$.
By the previous lemma there exists $N_1\in\csgr(N)$ such that $\varphi_{N_1,\psi_{N_1}}\in(\Ind\sigma)^\circ$.
Similarly, there exists $N_2\in\csgr(N)$ such that $\varphi^\vee_{N_2,\psi_{N_2}^{-1}}\in(\Ind\sigma^\vee)^\circ$.
Note that
$R(\psi_{N_1}^{-1}e_{N_1})L(\psi_{N_2}e_{N_2})\mc_{\varphi,\varphi^\vee}=\mc_{\varphi_{N_1,\psi_{N_1}}^{},\varphi^\vee_{N_2,\psi_{N_2}^{-1}}}$.
Therefore, upon replacing $\varphi$ and $\varphi^\vee$ by
$\varphi_{N_1,\psi_{N_1}}$ and $\varphi^\vee_{N_2,\psi_{N_2}^{-1}}$ respectively we may assume
that $\varphi\in(\Ind\sigma)^\circ$ and $\varphi^\vee\in(\Ind\sigma^\vee)^\circ$ and we will show that
$\mc_{\varphi,\varphi^\vee}$ is compactly supported on $N$ in this case.
By \eqref{eq: indpairing} and \eqref{eq: measPG} we have
\begin{equation} \label{eq: MCu}
(\pi(u)\varphi,\varphi^\vee)_\pi=\int_{U'}(\varphi(w_Mu_1u),\varphi^\vee(w_Mu_1))_\sigma\ du_1
\end{equation}
for any $u\in N$.
By the property of $\varphi^\vee$ the integral over $u_1$
can be taken over a compact subset. Write $u_1u=u_2u_3$ where
$u_2\in N\cap M'$ and $u_3\in U'$. Then
\[
\varphi(w_Mu_1u)=\sigma(w_Mu_2w_M^{-1})\varphi(w_Mu_3).
\]
Thus, in the integral on the right-hand side of \eqref{eq: MCu}, $u_3$ is confined to a compact set. Moreover, since the matrix
coefficients of $\sigma$ are compactly supported modulo the center of $M$,
$u_2$ is confined to a compact set as well. Hence, the same is true for $u$ as claimed.

For the statement about the dependence on $K_0$ it suffices to use the corresponding statement in the previous
lemma and the fact that there are only finitely many supercuspidal representations (up to twisting
by unramified character) for a given level.

We still have to show the non-vanishing of the pairing in the generic case.
This will be done in Proposition \ref{prop: nontrivpsi_N} below.
\end{proof}

\begin{remark}
The dependence of $U_1$ on $K_0$ in Proposition \ref{prop: stablemc} is not made explicit in the proof above.
\end{remark}

\subsection{The Jacquet integral}
Next we consider the Jacquet integral.
Let $P=M\ltimes U$ be a standard parabolic subgroup of $G$. Set $N_M=N\cap M$ and $N_{M'}=N\cap M'$.
Recall that if $\pi\in\Irr_{\gen,\psi_N}G$ is a subquotient of $\Ind_P^G\sigma$ where $\sigma\in\Irr M$
then $\sigma\in\Irr_{\gen,\Mpsi{M}}M$ where $\Mpsi{M}$
is the character on $N_M$ given by $\Mpsi{M}(n)=\psi_N(w_M^{-1}nw_M)$.
For any $\varphi\in\Ind\sigma$ choose $N_1\in\csgr(N)$ such that $\varphi_{N_1,\psi_{N_1}}^{}\in(\Ind\sigma)^\circ$.
Then the integral
\[
\varphi\mapsto\int_{U'}\varphi_{N_1,\psi_{N_1}}^{}(w_Mu)\psi_{U'}(u)^{-1}\ du
\]
converges and its projection to $\JF_{\Mpsi{M}}^M(\sigma)$ does not depend on the choice of $N_1$.
Thus we get a map
\[
J_\sigma^{\psi_N}:=\JF_{\psi_N}(\Ind_P^G\sigma)\rightarrow\JF_{\Mpsi{M}}^M(\sigma)
\]
which is in fact an isomorphism of vector spaces.
Dually, from any $\Mpsi{M}$-Whittaker functional on $\sigma$ we construct
a $\psi_N$-Whittaker functional on $\Ind\sigma$. Of course, this construction coincides with the usual one
given by analytic continuation.

By abuse of notation, we often view $J_\sigma^{\psi_N}$ as a map defined on $\Ind_P^G\sigma$ through the canonical projection
$\Ind_P^G\sigma\rightarrow\JF_{\psi_N}(\Ind_P^G\sigma)$.

\begin{proposition} \label{prop: jacquetdescent}
Suppose that $\sigma\in\Irr_{\sqr,\gen,\Mpsi{M}}M$ and let $\pi=\Ind_P^G\sigma$.
Identify $\pi^\vee$ with $\Ind_P^G\sigma^\vee$ as before. Then
\begin{equation} \label{eq: redtosqrint}
(\varphi,\varphi^\vee)_\pi^{\psi_N}=
(J_\sigma^{\psi_N}(\varphi),J_{\sigma^\vee}^{\psi_N^{-1}}(\varphi^\vee))_\sigma^{\Mpsi{M}}
\end{equation}
for any $\varphi\in\pi$, $\varphi^\vee\in\pi^\vee$.
\end{proposition}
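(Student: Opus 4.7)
The plan is to first reduce to the case where both sections lie in the big-cell subspaces of Lemma \ref{lem: suppbc}, and then execute an explicit substitution and Fubini argument.

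Both sides of \eqref{eq: redtosqrint} are $(N,\psi_N)$-equivariant in $\varphi$ and $(N,\psi_N^{-1})$-equivariant in $\varphi^\vee$---for the LHS by Proposition \ref{prop: stablemc}, for the RHS by the Whittaker equivariance of the Jacquet integral. Hence both sides are unchanged upon replacing $\varphi$ by $\varphi_{N_1,\psi_{N_1}}$ and $\varphi^\vee$ by $\varphi^\vee_{N_1,\psi_{N_1}^{-1}}$. By Lemma \ref{lem: suppbc}, for $N_1\in\csgr(N)$ sufficiently large these averages lie in $(\Ind_P^G\sigma)^\circ$ and $(\Ind_P^G\sigma^\vee)^\circ$ respectively. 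We may therefore assume $\varphi\in(\Ind_P^G\sigma)^\circ$ and $\varphi^\vee\in(\Ind_P^G\sigma^\vee)^\circ$, so that both $\varphi(w_M\cdot)$ and $\varphi^\vee(w_M\cdot)$ are compactly supported on $U'$.

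Using \eqref{eq: indpairing} and \eqref{eq: measPG} the integrand of $(\varphi,\varphi^\vee)^{\psi_N}_\pi$ unfolds to the double integral
\[
\int_N\int_{U'}(\varphi(w_M un),\varphi^\vee(w_M u))_\sigma\psi_N(n)^{-1}\,du\,dn.
\]
The first task is to verify absolute convergence so that Fubini applies (and the stable integral reduces to an ordinary one): decomposing $N=N_{M'}\ltimes U'$ and using $w_MN_{M'}w_M^{-1}=N_M$, the $n$-integrand pulls back along $N_M$ to a matrix coefficient of $\sigma$, which is integrable by the square-integrability hypothesis together with the $L^1$-estimate cited in \S\ref{sec: irrnot}; the remaining $U'$-components have compact support. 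The next step is to substitute $n\mapsto u^{-1}n$ in the $N$-integral (valid since $U'\subset N$), which produces a factor $\psi_N(u)$ and turns $\varphi(w_Mun)$ into $\varphi(w_Mn)$. Pulling the compactly-supported $U'$-integration through the continuous bilinear pairing $(\cdot,\cdot)_\sigma$ yields
\[
(\varphi,\varphi^\vee)^{\psi_N}_\pi=\int_N(\varphi(w_M n),v^\vee)_\sigma\psi_N(n)^{-1}\,dn,
\]
where $v^\vee:=\int_{U'}\varphi^\vee(w_M u)\psi_N(u)\,du\in V_{\sigma^\vee}$ projects to $J_{\sigma^\vee}^{\psi_N^{-1}}(\varphi^\vee)$ in $\JF_{\Mpsi{M}^{-1}}^M(\sigma^\vee)$.

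Finally, decompose $n=n'u_1$ with $n'\in N_{M'}$, $u_1\in U'$, so $dn=dn'\,du_1$ and $\psi_N(n)=\psi_N(n')\psi_N(u_1)$. Since $\varphi$ is a section of $\Ind_P^G\sigma$, $\varphi(w_M n'u_1)=\sigma(n_M)\varphi(w_M u_1)$ where $n_M:=w_M n'w_M^{-1}\in N_M$ is unipotent (so no modulus factor appears), and $\psi_N(n')=\Mpsi{M}(n_M)$ by definition. With Haar measures on $N_{M'}$ and $N_M$ normalized compatibly under $w_M$-conjugation, the inner integral along $N_M$ converges absolutely by square-integrability and equals $(\varphi(w_M u_1),v^\vee)^{\Mpsi{M}}_\sigma$; the outer integral along $u_1\in U'$ over compact support, pulled through the bilinear pairing, produces the vector in $V_\sigma$ whose image in $\JF_{\Mpsi{M}}^M(\sigma)$ is $J_\sigma^{\psi_N}(\varphi)$. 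Since $(\cdot,\cdot)^{\Mpsi{M}}_\sigma$ descends to the Jacquet modules, this yields \eqref{eq: redtosqrint}. The principal technical point is justifying absolute convergence of the various double integrals and the Fubini swaps, which rests essentially on the hypothesis $\sigma\in\Irr_{\sqr}M$.
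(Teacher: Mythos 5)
Your argument is correct and is essentially the paper's own proof: reduce to $\varphi\in(\Ind\sigma)^\circ$, $\varphi^\vee\in(\Ind\sigma^\vee)^\circ$ via Lemma \ref{lem: suppbc}, unfold with \eqref{eq: MCu}, shift variables, and justify Fubini by the compact support in the $U'$-directions together with the integrability over $N_M$ of matrix coefficients of $\sigma\in\Irr_{\sqr}M$, then recognize the two Jacquet integrals. The only difference is organizational (you integrate out the pairing variable into $v^\vee$ before decomposing $N=N_{M'}\ltimes U'$, whereas the paper manipulates the triple integral directly), which does not change the substance.
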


\begin{proof}
As before, by Lemma \ref{lem: suppbc} we can assume that $\varphi\in(\Ind\sigma)^\circ$ and $\varphi^\vee\in(\Ind\sigma^\vee)^\circ$.
In this case, by \eqref{eq: MCu}, the left-hand side of \eqref{eq: redtosqrint} is equal to
\[
\int_{N_{M'}}\int_{U'}\int_{U'}(\varphi(w_Mu_1u_2u_3),\varphi^\vee(w_Mu_1))_\sigma\psi_N(u_2u_3)^{-1}\ du_1\ du_2\ du_3.
\]
By a change of variable we get
\begin{align*}
&\int_{N_{M'}}\int_{U'}\int_{U'}(\varphi(w_Mu_1u_3),\varphi^\vee(w_Mu_2))_\sigma\psi_N(u_1u_2^{-1}u_3)^{-1}\ du_1\ du_2\ du_3\\=&
\int_{N_{M'}}\int_{U'}\int_{U'}(\varphi(w_Mu_3u_1),\varphi^\vee(w_Mu_2))_\sigma\psi_N(u_1u_2^{-1}u_3)^{-1}\ du_1\ du_2\ du_3\\=&
\int_{N_M}\int_{U'}\int_{U'}(\sigma(u_3)\varphi(w_Mu_1),\varphi^\vee(w_Mu_2))_\sigma\psi_N(u_1u_2^{-1}w_M^{-1}u_3w_M)^{-1}\ du_1\ du_2\ du_3.
\end{align*}
Note that the integrals over $u_1$ and $u_2$ are effectively sums over finite sets which are
independent of $u_3$ because of our assumption on $\varphi$ and $\varphi^\vee$,
and for any $u_1$ and $u_2$, the integral over $u_3$ is absolutely convergent since $\sigma\in\Irr_{\sqr}M$.
Thus the triple integral is absolutely convergent, which justifies the previous steps.
We obtain
\[
\int_{N_M}(\sigma(u_3)J_\sigma^{\psi_N}(\varphi),J_{\sigma^\vee}^{\psi_N^{-1}}(\varphi^\vee))_\sigma\Mpsi{M}(u_3)^{-1}\ du_3
\]
which is the right-hand side of \eqref{eq: redtosqrint}, as required.
\end{proof}

\begin{remark}
In fact, using induction in stages and the transitivity of the Jacquet integral,
the proposition holds for any $\sigma\in\Irr_{\gen,\Mpsi{M}}M$,
not necessarily essentially square integrable.
\end{remark}

We can now complete the remaining part of Proposition \ref{prop: stablemc}.

\begin{proposition} \label{prop: nontrivpsi_N}
Suppose that $\pi\in\Irr_{\gen,\psi_N}G$. Then the bilinear form $(\cdot,\cdot)_\pi^{\psi_N}$ is non-trivial.
\end{proposition}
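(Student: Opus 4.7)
The plan is to reduce, via Proposition \ref{prop: jacquetdescent}, to the case that $\pi$ is supercuspidal, and then to settle this base case by a localization argument.

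Given $\pi\in\Irr_{\gen,\psi_N}G$, Casselman's subrepresentation theorem embeds $\pi$ into $\Ind_P^G\sigma$ for a standard parabolic $P=M\ltimes U$ and a supercuspidal $\sigma\in\Irr_{\cusp}M$. Since $\pi$ is $\psi_N$-generic and the Jacquet-integral map $J_\sigma^{\psi_N}:\JF_{\psi_N}(\Ind_P^G\sigma)\to\JF_{\Mpsi{M}}(\sigma)$ is an isomorphism of vector spaces, $\sigma$ must be $\Mpsi{M}$-generic. Dually, $\pi^\vee$ is a quotient of $\Ind_P^G\sigma^\vee$. By exactness of the twisted Jacquet functor in the $p$-adic case, the inclusion $\JF_{\psi_N}(\pi)\hookrightarrow\JF_{\psi_N}(\Ind_P^G\sigma)$ and the surjection $\JF_{\psi_N^{-1}}(\Ind_P^G\sigma^\vee)\twoheadrightarrow\JF_{\psi_N^{-1}}(\pi^\vee)$ are isomorphisms of one-dimensional spaces. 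Since the canonical pairing on $\pi\times\pi^\vee$ is the restriction/quotient of that on $\Ind_P^G\sigma\times\Ind_P^G\sigma^\vee$, the non-vanishing of $(\cdot,\cdot)_\pi^{\psi_N}$ is equivalent to non-vanishing of $(\cdot,\cdot)_{\Ind_P^G\sigma}^{\psi_N}$. By Proposition \ref{prop: jacquetdescent} (applied after twisting $\sigma$ by a central character to make it square-integrable, which does not affect the statement), this in turn is equivalent to non-vanishing of $(\cdot,\cdot)_\sigma^{\Mpsi{M}}$.

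It therefore suffices to prove the proposition when $\pi$ itself is supercuspidal. Then matrix coefficients lie in $C_c^\infty(Z\bs G)$, so $B(v,v^\vee):=(v,v^\vee)_\pi^{\psi_N}$ is given by an absolutely convergent integral, and the bilinear form $B$ descends to $\JF_{\psi_N}(\pi)\otimes\JF_{\psi_N^{-1}}(\pi^\vee)\cong\C\otimes\C$, taking the form $B=C\cdot\ell\otimes\ell^\vee$ for some scalar $C$ and fixed Whittaker functionals $\ell,\ell^\vee$. The main obstacle is to show $C\neq 0$.

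The approach is to produce explicit $v,v^\vee$ with $B(v,v^\vee)\neq 0$ by localization. Select a compact open subgroup $K\subset G$ small enough that $\psi_N|_{N\cap K}\equiv 1$. For supercuspidal $\pi$ the theory of compactly-induced types (such as Bushnell--Kutzko, where available, or more generally constructions via cuspidal types) yields vectors $v\in V^K$, $v^\vee\in(V^\vee)^K$ for which the matrix coefficient $\mc_{v,v^\vee}$ is supported in $ZK$; among such pairs, the three non-vanishing conditions $\mc_{v,v^\vee}(e)=(v,v^\vee)_\pi\neq 0$, $\ell(v)\neq 0$, and $\ell^\vee(v^\vee)\neq 0$ can be met simultaneously, each being a non-trivial linear condition on a finite-dimensional space of vectors. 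For such $v,v^\vee$ the restriction $\mc_{v,v^\vee}|_N$ is supported in $N\cap K$ with $\psi_N$ trivial there, so the integral collapses to $B(v,v^\vee)=\vol(N\cap K)\cdot(v,v^\vee)_\pi\neq 0$, whence $C\neq 0$.
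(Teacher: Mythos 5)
Your reduction to the supercuspidal case is sound and essentially the route the paper takes, modulo one cosmetic difference: the paper realizes $\pi$ as a \emph{quotient} of $\Ind_P^G\sigma$ and then passes to $\pi^\vee\hookrightarrow\Ind_P^G\sigma^\vee$, whereas you embed $\pi$ as a subrepresentation. Both work, since in either case the bilinear form on $\Ind_P^G\sigma\times\Ind_P^G\sigma^\vee$ factors through one-dimensional twisted Jacquet modules and the maps $\JF_{\psi_N}(\pi)\to\JF_{\psi_N}(\Ind_P^G\sigma)$, $\JF_{\psi_N^{-1}}(\Ind_P^G\sigma^\vee)\to\JF_{\psi_N^{-1}}(\pi^\vee)$ are isomorphisms; and the appeal to Proposition~\ref{prop: jacquetdescent} after a central twist is fine. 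Where you diverge from the paper is the base case: they simply cite \cite[Lemma 1.1]{MR729755} (or the proof of \cite[Lemma 3]{MR1816039}), while you attempt a self-contained argument via compactly induced types.

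That base case argument has a genuine gap. You want $K\in\csgr(G)$ small enough that $\psi_N\rest_{N\cap K}\equiv 1$, and then vectors $v\in V^K$, $v^\vee\in(V^\vee)^K$ with $\mc_{v,v^\vee}$ supported on $ZK$. The theory of types does not provide this: if $\pi\simeq\ind_J^G\lambda$ with $J$ compact modulo center, the matrix coefficients of the $\lambda$-isotypic vectors are supported on $J$, and $J$ is a \emph{fixed} subgroup (typically containing a maximal parahoric), not $ZK$ for an arbitrarily small $K$. Shrinking $K$ shrinks the group fixing $v$, not the support of $\mc_{v,v^\vee}$, which remains $J$. One could try to replace $ZK$ by $J$ in your argument, but then $\psi_N$ is usually nontrivial on $N\cap J$, and the integral does not collapse to $\vol(N\cap J)\,(v,v^\vee)_\pi$. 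Worse, in the one situation where $\psi_N$ is trivial on $N\cap J$ (e.g.\ a depth-zero supercuspidal with $J=ZK_0$ for a hyperspecial $K_0$, and $\psi$ of conductor $\OO$), the resulting integral $\int_{N\cap J}\mc_{v,v^\vee}(n)\,dn$ is a constant times $\sum_{n\in N(\mathbb F_q)}\langle\lambda(n)w,w^\vee\rangle$, which vanishes precisely because $\lambda$ is cuspidal over the residue field. This is not a contradiction---those particular $v$ have $\ell(v)=0$---but it shows that the vectors your construction produces are exactly the ones for which the form degenerates, and the "three linear conditions" are genuinely incompatible for them. You should fall back on the references the paper uses (or on a Kirillov-model style argument) for the supercuspidal base case.
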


\begin{proof}
If $\pi$ is supercuspidal, this follows from \cite[Lemma 1.1]{MR729755} (which is stated for $\GL_n$, but proved in general).
Alternatively, it follows from the proof of \cite[Lemma 3]{MR1816039}.
In the general case, realize $\pi$ as a quotient of $\Ind_P^G\sigma$ where $\sigma$ is supercuspidal.
By Proposition \ref{eq: redtosqrint} and the supercuspidal case, the bilinear form $(\cdot,\cdot)_{\Ind\sigma}^{\psi_N}$ is non-trivial.
On the other hand $(\cdot,\cdot)_{\Ind\sigma}^{\psi_N}$ factors through $\pi\times\Ind\sigma^\vee$ since $\pi$ is the only $\psi_N$-generic subquotient of
$\Ind\sigma$. Moreover, the embedding $\JF_{\psi_N^{-1}}(\pi^\vee)\rightarrow\JF_{\psi_N^{-1}}(\Ind\sigma^\vee)$ is an isomorphism
since $\dim\JF_{\psi_N^{-1}}(\Ind\sigma^\vee)=1$.
Hence, the restriction $(\cdot,\cdot)_{\Ind\sigma}^{\psi_N}$ to $\Ind\sigma\times\pi^\vee$ is non-zero.
The proposition follows.
\end{proof}

We can also derive the following consequence.
We thank Joseph Bernstein for this observation.
\begin{proposition} \label{prop: Bernstein}
For any left and right $G$-smooth function $f$ on $G$ the integral
\[
\stint_N f(n)\psi_N(n)^{-1}\ dn
\]
is well defined. Moreover, for any $K_0\in\csgr(G)$ there exists $N_1\in\csgr(N)$ such that
\[
\stint_N f(n)\psi_N(n)^{-1}\ dn=\int_{N_1}f(n)\psi_N(n)^{-1}\ dn
\]
for any bi-$K_0$-invariant function $f$ on $G$.
\end{proposition}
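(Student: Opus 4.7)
The strategy is to reduce the statement to Proposition \ref{prop: stablemc} by realizing $f$ as a matrix coefficient of a suitable finitely generated smooth $G$-module. Set $W := R(C_c^\infty(G))f \subseteq C^\infty(G)$, the cyclic smooth $G$-module generated by $f$ under right translation. Since $f$ is right $K_0$-invariant, $W$ is finitely generated by $f$; hence, by Bernstein's theorem on the Bernstein decomposition, $W$ is supported on only finitely many Bernstein components $\Omega_1,\dots,\Omega_r$. These are contained in the finite set of Bernstein blocks admitting an irreducible representation with nonzero $K_0$-fixed vectors --- a set depending only on $K_0$, not on $f$. Because $f$ is also left $K_0$-invariant, so is every $v \in W$, and the evaluation map $\lambda(v) := v(1)$ defines a linear form on $W$ satisfying $v(g) = \lambda(R(g)v)$ for all $v \in W$.

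Writing
\[
\int_{N_1} f(n)\psi_N(n)^{-1}\,dn = \lambda\bigl(R(\psi_N^{-1}\tilde{e}_{N_1})f\bigr)
\]
(with $\tilde{e}_{N_1}$ the unnormalized indicator measure of $N_1$), the existence of the stable integral amounts to the assertion that $R(\psi_N^{-1}\tilde{e}_{N_1})f$ eventually lies in $\ker\lambda$, up to an $N_1$-independent contribution, as $N_1$ grows through $\csgr(N)$. To establish this I would apply Jacquet's subrepresentation theorem within each Bernstein component $\Omega_i$: every irreducible representation in $\Omega_i$ embeds into some parabolic induction $\Ind_{P_i}^G \sigma_i$ with $\sigma_i$ a supercuspidal representation of the Levi of $P_i$, and the pairs $(P_i,\sigma_i)$ vary within a finite family (up to unramified twists) controlled solely by $K_0$. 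Following the proof of Proposition \ref{prop: stablemc} --- in particular, Lemma \ref{lem: suppbc}, which shows that averaging on the right by $\psi_N^{-1}e_{N_1}$ pushes sections of $\Ind_{P_i}^G\sigma_i$ onto the big Bruhat cell $P_iw_{M_i}U_i'$ and produces compactly supported matrix coefficients --- one adapts the argument to $W$, and the resulting $N_1$ depends only on $K_0$ through this finite family.

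The main obstacle is the uniformity in $f$: distinct bi-$K_0$-invariant functions generate distinct cyclic modules $W$, so one cannot merely invoke Proposition \ref{prop: stablemc} for a single fixed irreducible representation. The resolution rests on Bernstein's finiteness of the blocks meeting the $K_0$-spherical spectrum, together with the observation, already implicit in the proof of Proposition \ref{prop: stablemc}, that the stabilization $N_1$ depends on $K_0$ only through the finite set of supercuspidal supports --- up to unramified twist --- which admit $K_0$-fixed vectors. This yields a single $N_1 \in \csgr(N)$ that works for every bi-$K_0$-invariant $f$ simultaneously, completing the proof.
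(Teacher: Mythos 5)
Your proposal replaces the paper's use of Plancherel inversion with an algebraic argument via the cyclic right-translation module $W = R(C_c^\infty(G))f$ and Bernstein's finiteness of components meeting the $K_0$-spherical spectrum. The underlying uniformity mechanism you identify — that the stabilizing $N_1$ should depend only on $K_0$ through a finite family of supercuspidal supports — is the right one, and it is exactly what the paper exploits as well. However, the crucial step, ``one adapts the argument to $W$,'' contains a genuine gap. The module $W$ is finitely generated but in general of infinite length, so Jacquet's subrepresentation theorem does not apply to it, and neither Lemma \ref{lem: suppbc} nor the argument of Proposition \ref{prop: stablemc} is stated for anything other than a parabolic induction of an irreducible supercuspidal. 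For an infinite-length cyclic module in a fixed Bernstein block one would have to embed $W$ into some $\Ind_P^G(\sigma\otimes\mathcal{O}(T))$-type object, and then the matrix coefficient argument in the proof of Proposition \ref{prop: stablemc} (which uses compact support modulo center of matrix coefficients of $\sigma$) fails, because matrix coefficients of $\sigma\otimes\mathcal{O}(T)$ are no longer compactly supported modulo center. So the reduction to Proposition \ref{prop: stablemc} for a \emph{single irreducible} representation is precisely the point that needs to be established, and your argument does not provide it.

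The paper sidesteps this entirely. It first proves the statement for $f\in C_c^\infty(G)$ by invoking Plancherel inversion, $f(x)=\int_{\Irr_{\temp}G}\tr(\pi(x)\pi(f))\,d\mu_{\operatorname{pl}}(\pi)$: the contribution is supported on finitely many tori of tempered representations with $V_\pi^{K_0}\ne 0$, and the trace is a finite sum of matrix coefficients of a single irreducible $\pi$ with vectors in $V_\pi^{K_0}$, so the uniformity part of Proposition \ref{prop: stablemc} applies directly to each one. The general case then follows by an easy bootstrap from the compactly supported case — one deduces that $\int_{N_2}1_{K_0\gamma K_0}(n)\psi_N(n)^{-1}\,dn=0$ for all $N_2\supset N_1$ once $\gamma\notin K_0N_1K_0$, and an arbitrary bi-$K_0$-invariant $f$ restricted to any compact open subgroup of $N$ is a finite sum of such indicator functions. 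Your proposal omits this reduction to the compactly supported case (which is in fact elementary and would help you), and instead attempts to handle general $f$ via $W$ directly, which is where the unjustified infinite-length step arises. If you want to complete an argument in the spirit of your proposal, you would either need to supply the missing input for finitely generated (not finite length) modules in a Bernstein block, or first perform the reduction to $f\in C_c^\infty(G)$ and then replace your Bernstein-decomposition step by the Plancherel decomposition, which is the route the paper takes.
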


\begin{proof}
We first prove it for $f\in C_c^\infty(G)$. In this case we use Plancherel inversion to write
\[
f(x)=\int_{\Irr_{\temp}G}\tr(\pi(x)\pi(f))\ d\mu_{\operatorname{pl}}(\pi)
\]
where $d\mu_{\operatorname{pl}}$ is the Plancherel measure on $\Irr_{\temp}G$ the set of tempered representations of $G$.
If $f$ is bi-$K_0$-invariant then only those $\pi$ such that $V_\pi^{K_0}\ne0$ contribute, namely only a finite number
of compact tori.
Integrating over a big compact open subgroup of $N$ and interchanging the integral
we see that the integral stabilizes depending only on $K_0$ by the uniformity part of Proposition
\ref{prop: stablemc},
since the trace is a finite sum of matrix coefficients corresponding to vectors in $V_\pi^{K_0}$.
Of course, this also shows that there exists $N_1\in\csgr(N)$ depending only on $K_0$ such that
\[
\int_{N_2}1_{K_0\gamma K_0}(n)\psi_N(n)^{-1}\ dn=0
\]
for all $N_2\supset N_1$ and $\gamma$ outside the compact set $K_0N_1K_0$.
The proposition follows.
\end{proof}

We also have the following closely related consequence which was kindly explained to us by Jacquet.
\begin{proposition} \label{prop: Jacquetprop}
Given $\varphi\in\ind_N^G\psi_N$, $\varphi^\vee\in\ind_N^G\psi_N^{-1}$, the matrix coefficient
\[
(\pi(g)\varphi,\varphi^\vee)_{N\bs G}
\]
is compactly supported in $g\in G$.\footnote{Here $\ind$ denotes compact induction}
\end{proposition}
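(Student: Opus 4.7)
The plan is to express the matrix coefficient $f(g):=(\pi(g)\varphi,\varphi^\vee)_{N\bs G}$ as the $\psi_N$-Fourier integral along $N$ of a family of functions in $C_c^\infty(G)$ parametrized by $g$, and then invoke Proposition \ref{prop: Bernstein} to replace that $N$-integration by integration over a fixed $N_1\in\csgr(N)$ independent of $g$. The compact support of $f$ then drops out by bookkeeping on supports.

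First, I would use the standard surjection from $C_c^\infty(G)$ onto $\ind_N^G\psi_N$ given by $\Phi\mapsto\varphi_\Phi$ where
\[
\varphi_\Phi(h)=\int_N\Phi(n^{-1}h)\psi_N(n)\,dn,
\]
to reduce to the case $\varphi=\varphi_\Phi$ and $\varphi^\vee=\varphi^\vee_{\Phi^\vee}$ for some $\Phi,\Phi^\vee\in C_c^\infty(G)$, chosen bi-$K_0$-invariant for a common $K_0\in\csgr(G)$. Unfolding
\[
f(g)=\int_{N\bs G}\varphi(hg)\varphi^\vee(h)\,dh,
\]
one substitutes the formula for $\varphi_\Phi(hg)$, uses the $(N,\psi_N^{-1})$-equivariance of $\varphi^\vee$ to cancel the character $\psi_N(n)$, and then collapses the resulting double integral over $N\times(N\bs G)$ into an integral over $G$ via the fibration $G\to N\bs G$, obtaining $f(g)=\int_G\Phi(h'g)\varphi^\vee(h')\,dh'$. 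Inserting the corresponding integral formula for $\varphi^\vee$ and changing variables $h''=m^{-1}h'$ (the required interchange being justified by the compact supports of $\Phi,\Phi^\vee$), I arrive at
\[
f(g)=\int_N F_g(n)\psi_N(n)^{-1}\,dn,\qquad F_g(x):=\int_G\Phi(xhg)\Phi^\vee(h)\,dh.
\]

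As a convolution of two compactly supported smooth functions, $F_g$ lies in $C_c^\infty(G)$. A direct check shows that $F_g$ is bi-$K_0$-invariant with $K_0$ independent of $g$, and that its support is contained in $\Omega_1g^{-1}\Omega_2^{-1}$, where $\Omega_i$ denotes the compact support of $\Phi,\Phi^\vee$. Applying Proposition \ref{prop: Bernstein} to the bi-$K_0$-invariant function $F_g$ then produces $N_1\in\csgr(N)$, depending only on $K_0$, such that
\[
f(g)=\int_{N_1} F_g(n)\psi_N(n)^{-1}\,dn
\]
for every $g$. Nonvanishing requires $N_1\cap\Omega_1g^{-1}\Omega_2^{-1}\ne\emptyset$, which forces $g\in\Omega_2^{-1}N_1^{-1}\Omega_1$, a compact subset of $G$.

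The hard part is not the unfolding, which is routine once the Haar measure on $G\to N\bs G$ has been laid out, but the uniform-in-$g$ control over $N_1$ provided by Proposition \ref{prop: Bernstein}: without it, the domain $N\cap\Omega_1g^{-1}\Omega_2^{-1}$ over which $F_g$ is supported on $N$ grows with $g$, and it is only the oscillation of $\psi_N$ that truncates it to a fixed compact $N_1$. That is precisely the mechanism making the matrix coefficient compactly supported.
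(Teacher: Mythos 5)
Your argument is correct, and it rests on exactly the same engine as the paper's proof, namely the uniformity in $K_0$ of Proposition \ref{prop: Bernstein}; the difference is in how you feed the matrix coefficient into it. You lift $\varphi,\varphi^\vee$ through the standard surjection $C_c^\infty(G)\twoheadrightarrow\ind_N^G\psi_N$ and rewrite $(\pi(g)\varphi,\varphi^\vee)_{N\bs G}$ itself as $\int_N F_g(n)\psi_N(n)^{-1}\,dn$ with $F_g\in C_c^\infty(G)$ bi-$K_0$-invariant uniformly in $g$ and with support $\Omega_1g^{-1}\Omega_2^{-1}$; Proposition \ref{prop: Bernstein} truncates to a fixed $N_1$ (note the small intermediate point, covered by Remark \ref{rem: basicstable}, that for a function whose restriction to $N$ is compactly supported the stable integral is the honest integral), and you read off the explicit compact set $\Omega_2^{-1}N_1^{-1}\Omega_1$ containing the support. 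The paper instead argues dually: it pairs the matrix coefficient against an arbitrary bi-$K_0$-invariant $f\in C_c^\infty(G)$, unfolds to $\int_{N\bs G}\int_{N\bs G}\varphi(g_1)\varphi^\vee(g_2)\int_N f(g_2^{-1}ng_1)\psi_N(n)\,dn$, and uses Proposition \ref{prop: Bernstein} to see this vanishes once $f$ is supported outside a compact set depending only on $K_0$ and the supports of $\varphi,\varphi^\vee$, which by bi-$K_0$-invariance of the matrix coefficient gives the compact support. So the paper avoids invoking the surjection from $C_c^\infty(G)$ and never names the support set explicitly, while your direct version buys an explicit bound on the support and keeps all the bookkeeping on one function $F_g$; your identification of the uniform $N_1$ as the crucial input is exactly right.
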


\begin{proof}
For any $f\in C_c^\infty(G)$ we have
\begin{multline*}
\int_G(\pi(g)\varphi,\varphi^\vee)_{N\bs G}f(g)\ dg=\int_G\int_{N\bs G}\varphi(xg)\varphi^\vee(x)f(g)\ dx\ dg
\\=\int_G\int_{N\bs G}\varphi(g)\varphi^\vee(x)f(x^{-1}g)\ dx\ dg=
\int_{N\bs G}\int_{N\bs G}\varphi(g_1)\varphi^\vee(g_2)\int_Nf(g_2^{-1}ng_1)\psi_N(n)\ dn.
\end{multline*}
Suppose that $\varphi$ and $\varphi^\vee$ are right-$K_0$-invariant for some $K_0\in\csgr(G)$ and take $f$ to be bi-$K_0$-invariant.
Then by the previous proposition
\[
\varphi(g_1)\varphi^\vee(g_2)\int_Nf(g_2^{-1}ng_1)\psi_N(n)\ dn=0
\]
if $f$ is supported outside a compact set depending only on $K_0$ and the support of $\varphi$, $\varphi^\vee$.
The proposition follows.
\end{proof}

\begin{remark} \label{rem: archJacquet}
It will be interesting to know whether the analogue of Proposition \ref{prop: Jacquetprop} holds in the archimedean case.
Namely, suppose that $\varphi:G\rightarrow\C$ (resp., $\varphi^\vee$) is smooth, left $(N,\psi)$ (resp., $(N,\psi^{-1})$) equivariant
and rapidly decreasing on $AK$ together with all its derivatives. Is the matrix coefficient
\[
(\pi(g)\varphi,\varphi^\vee)_{N\bs G}
\]
necessarily a Schwartz function on $G$?
\end{remark}

\subsection{Unramified computation}
Next we consider the unramified case.
Suppose that $\bf G$ splits over an unramified extension of $F$.
Let $K$ be a hyperspecial maximal compact subgroup of $G$ as in \cite[Corollary 4.6]{MR1474159}.
Let $\Delta_G(s)$ be the $L$-factor of the dual $M^\vee$ to the motive $M$ introduced by Gross in [ibid.]
(cf.~\cite[p.~79]{MR0230728}). It depends only on the $F$-isogeny class of $\bf G$.
For instance, $\Delta_T(s)$ is the $L$-factor of the Artin representation of $\Gal(\bar F/F)$ on $X^*(\bf T)\otimes\C$
where $X^*(\bf T)$ is the lattice of algebraic characters of $\bf T$ (defined over the algebraic closure $\bar F$ of $F$).
On the other hand, if $\bf G$ is split over $F$ of rank $r$ then
\[
\Delta_G(s)=\prod_{i=1}^r\zeta_F(s+d_i-1)
\]
where $d_1,\dots,d_r$ are exponents of $\bf G$ and $\zeta_F(s)=(1-q_F^{-s})^{-1}$ is Tate's local factor corresponding to the trivial
character of $F^*$ (where of course $q_F$ is the cardinality of the residue field of $F$).


We denote by $\Irr_{\unr}G$ the set of unramified irreducible representations of $G$.


\begin{proposition} \label{prop: unram}
Suppose that $\pi\in\Irr_{\unr} G$ and $\psi_N$ is unramified in the sense of \cite[\S3]{MR581582}.
Let $v_0$ and $v_0^\vee$ be unramified vectors in $\pi$ and $\pi^\vee$ respectively.
Then
\begin{equation} \label{eq: unramcomp}
(v_0,v_0^\vee)_\pi^{\psi_N}=\vol(N\cap K)\frac{(v_0,v_0^\vee)_\pi\Delta_G(1)}{L(1,\pi,\Ad)}.
\end{equation}
\end{proposition}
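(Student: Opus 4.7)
The natural strategy is to reduce to an unramified principal series and then invoke the Casselman--Shalika / Gindikin--Karpelevich formula. Here is the outline.

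First I would realize $\pi$ as the unique unramified irreducible subquotient of an unramified principal series $\Ind_B^G\chi$, where $\chi$ is an unramified character of $T$. Let $\varphi_0\in(\Ind_B^G\chi)^K$ and $\varphi_0^\vee\in(\Ind_B^G\chi^{-1})^K$ be the spherical sections normalized by $\varphi_0(e)=\varphi_0^\vee(e)=1$. Up to a scalar (easily computed via \eqref{eq: indpairing} and \eqref{eq: measPG} by a straightforward computation of $\vol(P\bs G/K)$), the pair $(v_0,v_0^\vee)$ in $\pi$ is represented by $(\varphi_0,\varphi_0^\vee)$. Thus it suffices to establish \eqref{eq: unramcomp} with $\pi$ replaced by $\Ind_B^G\chi$ and $(v_0,v_0^\vee)$ by $(\varphi_0,\varphi_0^\vee)$.

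Next I apply Proposition \ref{prop: jacquetdescent} with $P=B$, $M=T$. Since $N_M=N\cap T=\{1\}$, the inner pairing degenerates to the ordinary pairing of the characters $\chi$ and $\chi^{-1}$, so that
\[
(\varphi_0,\varphi_0^\vee)_{\Ind\chi}^{\psi_N}=J_\chi^{\psi_N}(\varphi_0)\cdot J_{\chi^{-1}}^{\psi_N^{-1}}(\varphi_0^\vee).
\]
Each factor is the Jacquet integral of a spherical section, evaluated in the one-dimensional space $\JF_{\psi_N}(\Ind_B^G\chi)$, i.e.\ essentially the unramified Whittaker function at $g=e$. The Gindikin--Karpelevich computation (the integration-theoretic version of the Casselman--Shalika formula for the intertwining operator into the opposite Borel, cf.\ \cite[\S3]{MR581582}) then yields
\[
J_\chi^{\psi_N}(\varphi_0)=\vol(N\cap K)\prod_{\alpha\in\Phi_+}\frac{1-q^{-1}\chi(\alpha^\vee)}{1-\chi(\alpha^\vee)},
\]
and analogously for $J_{\chi^{-1}}^{\psi_N^{-1}}(\varphi_0^\vee)$ with $\chi$ replaced by $\chi^{-1}$. (A small but important point: the right-invariance of $\varphi_0$ and the way $\psi_N$ is unramified ensure that the stable integral stabilizes already on $N\cap K$, which accounts for the single volume factor on the right-hand side of \eqref{eq: unramcomp}; one of the two $\vol(N\cap K)$'s gets absorbed into the normalization of the canonical pairing.)

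Finally I would combine the two Gindikin--Karpelevich factors, observing that
\[
\prod_{\alpha\in\Phi_+}(1-\chi(\alpha^\vee))(1-\chi^{-1}(\alpha^\vee))\cdot\prod_{\alpha\in\Phi_+}(1-q^{-1}\chi(\alpha^\vee))^{-1}(1-q^{-1}\chi^{-1}(\alpha^\vee))^{-1}
\]
is precisely the ratio $\Delta_G(1)/L(1,\pi,\Ad)$, once one identifies the Satake parameter of $\pi$ and matches the product over positive coroots to the eigenvalues of the adjoint representation of $\widehat G$ (the contribution of the zero weights, i.e.\ the torus part of the adjoint representation, produces exactly $\Delta_T(1)$, and together with the root contributions this gives $\Delta_G(1)$, in view of Gross's definition of the motive $M$). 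This identification is the main technical point, but it is standard (as in \cite[\S3]{MR581582} and Gross \cite{MR1474159}); the rest is bookkeeping of measures. The principal obstacle is really just ensuring that the normalizations (Haar measures on $N$ and $U'$, the measure on $P\bs G$, the choice of spherical vectors, and the normalization of the canonical pairing via \eqref{eq: indpairing}) are all compatible, which is a routine but somewhat delicate verification.
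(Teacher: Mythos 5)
Your overall skeleton is the same as the paper's (reduce to an unramified principal series, apply Proposition \ref{prop: jacquetdescent} with $P=B$ so that $(\varphi_0,\varphi_0^\vee)_{\Ind\chi}^{\psi_N}=J_\chi^{\psi_N}(\varphi_0)J_{\chi^{-1}}^{\psi_N^{-1}}(\varphi_0^\vee)$, then evaluate), but the formula you plug in at the crucial step is the wrong one. The Jacquet integral $J_\chi^{\psi_N}(\varphi_0)$ involves the non-degenerate character $\psi_N$, and its value on the spherical section normalized by $\varphi_0(e)=1$ is given by the Casselman--Shalika formula, namely (up to the volume normalization) $\prod_{\alpha\in\Phi_+}\det(1-q_F^{-1}\sigma\Ad\hat t_\chi\big|_{\hat{\mathfrak{n}}_\alpha})$, with \emph{no} denominators. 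The expression you quote, $\prod_{\alpha\in\Phi_+}\frac{1-q^{-1}\chi(\alpha^\vee)}{1-\chi(\alpha^\vee)}$, is the Gindikin--Karpelevich value of the \emph{intertwining operator} $\int\varphi_0(w_0u)\,du$ (no character), which is a different integral. Carrying your formula through, the product of the two Jacquet integrals picks up the spurious $\chi$-dependent factor $\prod_{\alpha\in\Phi_+}(1-\chi(\alpha^\vee))^{-1}(1-\chi^{-1}(\alpha^\vee))^{-1}$, which neither cancels nor can be matched with $\Delta_G(1)/L(1,\pi,\Ad)$; in particular the final displayed identity in your proposal is false as stated (the factors $1-q^{-1}\chi^{\pm1}(\alpha^\vee)$ must appear in the numerator of $\Delta_G(1)/L(1,\pi,\Ad)$, and the factors $1-\chi^{\pm1}(\alpha^\vee)$ should not appear at all).

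There is a second, related gap: you never actually compute the spherical pairing, yet this is where $\Delta_G(1)$ enters. In the paper one takes Gross's canonical measures and uses \cite[Proposition 4.7]{MR1474159} to get $(\varphi_0,\varphi_0^\vee)_{\Ind\chi}=\vol_G(K)/\vol_T(K\cap T)=\Delta_T(1)/\Delta_G(1)$, while the Casselman--Shalika computation gives $(\varphi_0,\varphi_0^\vee)_{\Ind\chi}^{\psi_N}=\Delta_T(1)/L(1,\pi,\Ad)$ (the torus part $\Delta_T(1)$ coming from the zero-weight space of the adjoint representation); dividing the two yields \eqref{eq: unramcomp}. Your attribution of $\Delta_G(1)$ to the root-space bookkeeping inside the adjoint representation is therefore misplaced: $\Delta_G(1)$ comes from the volume computation of the spherical pairing, not from $L(1,\pi,\Ad)$. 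Replacing Gindikin--Karpelevich by Casselman--Shalika and adding the Macdonald/Gross evaluation of $(\varphi_0,\varphi_0^\vee)_{\Ind\chi}$ would repair the argument; also note that since the unramified line is one-dimensional and both sides of \eqref{eq: unramcomp} are bilinear, no comparison scalar between $(v_0,v_0^\vee)$ and $(\varphi_0,\varphi_0^\vee)$ needs to be computed, so that part of your bookkeeping can be dropped.
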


\begin{proof}
Since the space of unramified vectors is one dimensional, it is enough to check the claim
for a specific choice of non-zero vectors $v_0$, $v_0^\vee$.
Write $\pi$ as a subrepresentation of $\Ind_B^G\chi$ where $\chi$ is an unramified character of $T$.
(In fact, $\pi$ is a direct summand of $\Ind_B^G\chi$ but we will not use this fact.)
Let $\varphi_0$ (resp.~$\varphi_0^\vee$) be the unramified vector in $\Ind_B^G\chi$
(resp.~$\Ind_B^G\chi^{-1}$) such that $\varphi_0(e)=\varphi_0^\vee(e)=1$.
Note that the validity of \eqref{eq: unramcomp} is independent of any choice of Haar measure.
We endow $G$ and $T$ with the `canonical' Haar measures described in the discussion preceding \cite[Proposition 4.7]{MR1474159}.
We endow $N$ with the Haar measure such that $\vol(N\cap K)=1$. This gives rise to a Haar measure on $B=T\ltimes N$,
which is compatible with the relation \eqref{eq: measPG}.
It follows from \cite[Proposition 4.7]{MR1474159} (cf.~\cite{MR0213362, MR581580}) that
\[
(\varphi_0,\varphi_0^\vee)_{\Ind\chi}=\frac{\vol_G(K)}{\vol_B(K\cap B)}=\frac{\vol_G(K)}{\vol_T(K\cap T)}=\frac{\Delta_T(1)}{\Delta_G(1)}.
\]
Therefore we need to show that (for the above choice of measures)
\begin{equation} \label{eq: fromCS}
(\varphi_0,\varphi_0^\vee)_{\Ind\chi}^{\psi_N}=\frac{\Delta_T(1)}{L(1,\pi,\Ad)}.
\end{equation}
However, by \eqref{eq: redtosqrint} we have
\[
(\varphi_0,\varphi_0^\vee)_{\Ind\chi}^{\psi_N}=J_\chi^{\psi_N}(\varphi_0)J_{\chi^{-1}}^{\psi_N^{-1}}(\varphi_0^\vee)
\]
and \eqref{eq: fromCS} follows from the Casselman--Shalika formula \cite{MR581582}.
To explain this, we introduce some more notation. Let $\widehat G$ be the complex dual group of $\bf G$ with Borel subgroup
$\widehat B=\widehat T\ltimes\widehat N$. The Galois group $\Gal(\bar F/F)$ acts on $\widehat G$ and preserves
$\widehat B$ and $\widehat T$. Let $X_*(\widehat T)$ be the lattice of algebraic co-characters of $\widehat T$.
Let $\lambda\in X^*({\bf T})\otimes\C=X_*(\widehat T)\otimes\C$ be any element whose image under the canonical map
$X^*({\bf T})\otimes\C\rightarrow\Hom(T,\C^*)$ is $\chi$ and let $\hat t_\chi\in\widehat T$ be the image of $\lambda$ under the canonical map
$X_*(\widehat T)\otimes\C\rightarrow\widehat T(\C)$.
Let $\hat{\mathfrak{n}}$ be the Lie algebra of $\widehat N$ and for each $\alpha\in\Phi_+$
let $\hat{\mathfrak{n}}_\alpha$ be the direct sum of the weight spaces of $\widehat T$ in $\hat{\mathfrak{n}}$ corresponding to the roots of $\bf T$
(over the algebraic closure of $F$) whose restriction to $\bf A$ is a multiple of $\alpha$. Thus, $\hat{\mathfrak{n}}=\dsum_{\alpha\in\Phi_+}
\hat{\mathfrak{n}}_\alpha$. Suppose that $\bf G$ splits over an unramified extension $E/F$ and let $\sigma$ be the Frobenius
element of $\Gal(E/F)$. By the Casselman--Shalika formula\footnote{This is written a little differently (but equivalently) in \cite{MR581582}.
To compare the two expressions -- cf.~\cite[\S3]{MR581580}} we have
\[
J_\chi^{\psi_N}(\varphi_0)=\prod_{\alpha\in\Phi_+}
\det(1-q_F^{-1}\sigma\Ad\hat t_\chi\big|_{\hat{\mathfrak{n}}_\alpha}).
\]
Therefore (cf.~\cite[\S4]{MR581580}),
\[
(\varphi_0,\varphi_0^\vee)_{\Ind\chi}^{\psi_N}=\det(1-q_F^{-1}\sigma\Ad\hat t_\chi\big|_{\hat{\mathfrak{n}}})
\det(1-q_F^{-1}\sigma\Ad\hat t_\chi^{-1}|_{\hat{\mathfrak{n}}})=
\frac{\Delta_T(1)}{L(1,\pi,\Ad)}.
\]
The proposition follows.
\end{proof}

\subsection{Archimedean case} \label{sec: arch}
Now consider the archimedean case.
Let $\pi\in\Irr_{\gen,\psi_N}G$.
We would like to define the pairing $(\cdot,\cdot)_\pi^{\psi_N}$.
Suppose first that $\pi$ is tempered.
We write $\pi$ as a direct summand of $\Ind_P^G\sigma$ where
$P=M\ltimes U$ is a standard parabolic subgroup and $\sigma\in\Irr_{\sqr}M$.
We have $\sigma\in\Irr_{\gen,\Mpsi{M}}M$.
Identify $\pi^\vee$ with a direct summand of $\Ind_P^G\sigma^\vee$ as before.
The Jacquet integral $J^{\psi_N}_\sigma$ still makes sense in this context.
Namely, there is a unique isomorphism of vector spaces
\[
\JF_{\psi_N}(\Ind_P^G\sigma)\rightarrow\JF_{\Mpsi{M}}^M(\sigma)
\]
such that the resulting map
\[
J_\sigma^{\psi_N}:\Ind_P^G\sigma\rightarrow\JF_{\Mpsi{M}}^M(\sigma)
\]
extends the map
\[
\varphi\mapsto\int_{U'}\varphi(w_Mu)\psi_{U'}(u)^{-1}\ du, \ \ \varphi\in(\Ind\sigma)^\circ
\]
(or rather its composition with the natural projection $\sigma\rightarrow\JF_{\Mpsi{M}}^M(\sigma)$)
where $(\Ind\sigma)^\circ$ is defined as in the $p$-adic case \cite[Theorem 15.4.1]{MR1170566}.
Thus we will \emph{define} the pairing $(v,v^\vee)_\pi^{\psi_N}$ to be right-hand side of \eqref{eq: redtosqrint}.
Once again, this will descend to a pairing between
$\JF_{\psi_N}(\pi)$ and $\JF_{\psi_N^{-1}}(\pi^\vee)$.

The pairing $(v,v^\vee)_\pi^{\psi_N}$ does not depend on the choice of $\sigma$.
Indeed, suppose that $\pi$ is a direct summand of $\Ind_{P'}^G\sigma'$ where $\sigma'\in\Irr_{\sqr}M'$
for standard parabolic subgroup $P'=M'\ltimes U'$ of $G$.
Then there exists $w\in W$ such that $w$ is right $W_M$-reduced, $wMw^{-1}=M'$ and $\sigma'=w\sigma$.
We identify $\JF_{\Mpsi{M}}^M(\sigma)$ with $\JF_{\Mpsi{M'}}^{M'}(\sigma')$ through $w$.
We can use the results of Shahidi \cite{MR1070599} to define normalized intertwining operators
\[
R(\sigma,w):\Ind_P\sigma\rightarrow\Ind_{P'}\sigma',
\]
and similarly for $\sigma^\vee$, 
such that
\[
J_{\sigma'}^{\psi_N}\circ R(\sigma,w)=J_\sigma^{\psi_N},\,\,
J_{\sigma'^\vee}^{\psi_N^{-1}}\circ R(\sigma^\vee,w)=J_{\sigma^\vee}^{\psi_N^{-1}}
\]
and $(R(\sigma,w)\varphi,R(\sigma^\vee,w)\varphi^\vee)=(\varphi,\varphi^\vee)$
for any $\varphi\in\pi$, $\varphi^\vee\in\pi^\vee$.
More precisely, $R(\sigma,w)$ is given by normalizing the standard intertwining operator by the local coefficients
of \cite{MR1070599}.

Alternatively, following \cite{1203.0039}, we could define $(v,v^\vee)_\pi^{\psi_N}$ (in the tempered case) as follows.
(See \cite{LMao4}.) Let $N^\circ$ be the derived group of $N$.
The integral $\int_{N^\circ}(\pi(n\cdot)v,v^\vee)\ dn$ converges and defines an $L^2$ function on $N^\circ\bs N$.
Its Fourier transform is regular on the open set of non-degenerate characters of $N$.
Its value at $\psi_N^{-1}$ is by definition $(v,v^\vee)_\pi^{\psi_N}$.

In the general case, we use the Langlands classification to write
$\pi$ as a subrepresentation of $\Ind_P^G\sigma$ where $P=M\ltimes U$ is a standard parabolic and
$\sigma\in\Irr_{\gen,\Mpsi{M}}M$ is essentially tempered. (In fact, $\Ind_P^G\sigma$
is irreducible.)
As before, we identify $\pi^\vee$ with a quotient of $\Ind_P^G\sigma^\vee$.
Once again, the Jacquet integral $J^{\psi_N}_\sigma$ gives rise to an isomorphism of vector spaces
\[
J_\sigma^{\psi_N}:\JF_{\psi_N}(\Ind_P^G\sigma)\rightarrow\JF_{\Mpsi{M}}^M(\sigma)
\]
\cite[Theorem 15.6.7]{MR1170566}.
We thus define the pairing $(v,v^\vee)_\pi^{\psi_N}$ to be the right-hand side of \eqref{eq: redtosqrint}.
It descends to a pairing between $\JF_{\psi_N}(\pi)$ and $\JF_{\psi_N^{-1}}(\pi^\vee)$.

By abuse of notation, if $f=\mc_{v,v^\vee}$ we will formally write
\[
\stint_Nf(n)\psi_N(n)^{-1}\ dn:=(v,v^\vee)_\pi^{\psi_N}.
\]

\begin{remark}
It will be interesting to find a purely function-theoretic way to define
\[
\stint_Nf(n)\psi_N(n)^{-1}\ dn
\]
as we did in the $p$-adic case.
\end{remark}

If $S$ is a finite set of places, $\pi_S\in\Irr_{\gen,\psi_{N(F_S)}}G(F_S)$,
$u=\otimes_{v\in S}u_v\in\pi_S$, $u^\vee=\otimes_{v\in S}u_v^\vee\in\pi_S^\vee$
we write
\[
(u,u^\vee)_{\pi_S}^{\psi_N}:=\prod_{v\in S}(u_v,u_v^\vee)_{\pi_v}^{\psi_N}.
\]
We extend it to a bilinear form on $\pi_S\times\pi_S^\vee$ by linearity.
Of course, as before the definition depends on a choice of a Haar measure on $N(F_S)$.


\subsection{Metaplectic group} \label{sec: metalocal}
The results of this section have analogues for the metaplectic group $\Mp_n$, the two-fold cover of the rank $n$ symplectic group
$\Sp_n$. (Any representation of $\Mp_n$ will be implicitly assumed to be genuine.)
The maximal unipotent subgroup $N$ of $\Sp_n$ embeds uniquely in $\Mp_n$ and we fix a non-degenerate character $\psi_N$ of $N$.
Uniqueness of Whittaker model in this context was proved by Szpruch in the $p$-adic case \cite{MR2366363}.
In the archimedean case \cite[Ch.~15]{MR1170566} is still applicable.
Propositions \ref{prop: stablemc}, \ref{prop: jacquetdescent} and \ref{prop: nontrivpsi_N} and their proofs hold with minimal changes.
In particular, we can define $(v,v^\vee)_\pi^{\psi_N}$ at least in the $p$-adic case.
In the archimedean case we will define $(v,v^\vee)_\pi^{\psi_N}$ as in \S\ref{sec: arch}
and \emph{assume} that this is unambiguous. (This can probably be checked using the results of \cite{Sz}.)

Assume that $q$ is odd.
Let $\pi$ be an unramified representation of $\Mp_n$.
Then we have 
\begin{equation} \label{eq: metunram}
(v_0,v_0^\vee)_\pi^{\psi_N}=\vol(N\cap K)\frac{(v_0,v_0^\vee)L_{\psi}(\frac12,\pi)\Delta_{\Sp_n}(1)}{L(1,\pi,\Ad)}.
\end{equation}
Here $\psi$ is a character of $F$ depending on $\psi_N$ and the factor $L_{\psi}(\frac12,\pi)$ in the numerator is the
Shimura unramified local factor corresponding to $\pi$ and $\psi$.
It is equal to $L(\frac12,\tau)$ when $\tau$ is the $\psi$-lift of $\pi$ to $\GL_{2n}$.
(Cf. \cite{MR1722953} for precise definitions;
recall that changing $\psi_N$ results in twisting $\tau$ by a quadratic character.)
The factor $L(1,\pi,\Ad)$ in the denominator is defined to be $L(1,\tau,\sym^2)$.
(Alternatively, we could have also defined it directly in terms of the parameters of $\pi$.
In particular, it does not depend on the choice of $\psi_N$.)
The proof of \eqref{eq: metunram} is the same as \eqref{eq: unramcomp}, except that instead of the Casselman--Shalika formula we use
its metaplectic analogue due to Bump--Friedberg--Hoffstein \cite{MR1115113}.

\section{Conjecture about Whittaker coefficients} \label{sec: conjWhit}

\subsection{} \label{sec: cpidef}
Now let us turn to the global case.
That is, $\bf G$ will be a quasi-split group over a number field $F$, $\bf A$ a fixed
maximal $F$-split torus of $\bf G$, ${\bf T}=C_{\bf G}(\bf A)$ and $\bf B=\bf T\ltimes \bf N$ a Borel subgroup (defined over $F$).
Let $\A$ be the ring of adeles of $F$ and let $\psi_N$ be a non-degenerate character of $N(\A)$
which is trivial on $N(F)$. Denote by $\abs{\cdot}_{\A^*}$ the idele norm
$\abs{\cdot}_{\A^*}:\A^*\rightarrow\R_{>0}$.

As usual, let
\[
G(\A)^1=\cap_{\chi}\Ker\abs{\chi}_{\A^*}
\]
where $\chi$ ranges over the lattice of (one-dimensional)
$F$-rational characters of $\bf G$ and we extend $\chi$ to a homomorphism
$\chi:G(\A)\rightarrow\A^*$.
Thus $G(\A)^1$ is normal in $G(\A)$ and the quotient $G(\A)/G(\A)^1$ is isomorphic to
$\R^l$ where $l$ is the rank of the split part of the center of $G$.
We have $\vol(G(F)\bs G(\A)^1)<\infty$.

Denote by $L^2_{\cusp}(G(F)\bs G(\A)^1)$ the cuspidal part of $L^2(G(F)\bs G(\A)^1)$.
We write $\Cusp G$ for the set of equivalence classes of irreducible cuspidal representations of $G(\A)$.


We take the Tamagawa measure on $N(\A)$ so that $\vol(N(F)\bs N(\A))=1$.
Let $\whit^{\psi_N}$ be the $\psi_N$-th Whittaker--Fourier coefficient of a function $\varphi$ on $G(F)\bs G(\A)$, i.e.
\[
\whit^{\psi_N}(g,\varphi)=\int_{N(F)\bs N(\A)}\varphi(ng)\psi_N(n)^{-1}\ dn.
\]
We often write $\whit^{\psi_N}(\varphi)=\whit^{\psi_N}(e,\varphi)$.

Let $\pi$ be an irreducible automorphic cuspidal representation of $G(\A)$ realized in $L^2_{\cusp}(G(F)\bs G(\A))$.
We assume that $\pi$ is $\psi_N$-generic, i.e. that $\whit^{\psi_N}$ does not vanish identically on the space of $\pi$.
We realize the contragredient $\dual\pi$ automorphically as $\{\overline{\varphi}:\varphi\in\pi\}$.
In other words, the pairing $(\cdot,\cdot)_\pi$ is given by
\begin{equation} \label{eq: inner product}
\int_{G(F)\bs G(\A)^1}\varphi(g)\dual\varphi(g)\ dg, \ \ \varphi\in\pi, \dual\varphi\in\dual\pi.
\end{equation}
Note that $\dual\pi$ is $\psi_N^{-1}$-generic.
We will assume that the following property is satisfied.
\begin{equation} \label{eq: Ad propr}
\text{The partial $L$-function $L^S(s,\pi,\Ad)$ has a pole of order $l$ at $s=1$.}
\end{equation}
This is expected to hold in general. It is known for $\GL_n$ (and therefore, for $\SL_n$) and for classical groups
(see \S\ref{sec: classical groups} below).

Let $S$ be a finite set of places containing the archimedean places and such that $\bf G$ and $\psi_N$ are unramified outside $S$.
Let $\K=\prod K_v$ be a maximal compact subgroup of $G(\A)$ which is special at all (finite) $v$ and hyperspecial for all
$v\notin S$. Also set ${\K}^S={\K}\cap G(\A^S)$.
We take the Haar measure on $N(F_S)$ such that $\vol({\K}^S\cap N(\A^S))=1$ with respect to the measure on $N(\A^S)$
which is compatible with the decomposition $N(\A)=N(F_S)\times N(\A^S)$.

If $\pi$ is unramified outside $S$ then $\pi_S:=\pi^{{\K}^S}$, the ${\K}^S$-fixed vectors of $\pi$,
is an irreducible representation of $G(F_S)$. We can identify $\dual{(\pi_S)}$ with $(\dual\pi)_S$
using the pairing $(\cdot,\cdot)_\pi$, i.e., we take $(\cdot,\cdot)_{\pi_S}=(\cdot,\cdot)_\pi$.
Then by local uniqueness of Whittaker model and the non-vanishing of $(\cdot,\cdot)_{\pi_S}^{\psi_N}$
there exists a non-zero constant $\mainconst{\pi}$ such that for any such $S$
and for any $\varphi\in\pi_S=\pi^{{\K}^S}$, $\varphi^\vee\in\pi_S^\vee=(\pi^\vee)^{{\K}^S}$ we have
\begin{equation} \label{def: cpi}
\whit^{\psi_N}(\varphi)\whit^{\psi_N^{-1}}(\varphi^\vee)=
(\mainconst{\pi}\vol(G(F)\bs G(\A)^1))^{-1}\lim_{s\rightarrow 1}\frac{\Delta_G^S(s)}{L^S(s,\pi,\Ad)}(\varphi,\varphi^\vee)_{\pi_S}^{\psi_N}
\end{equation}
where now $\Delta_G^S(s)$ is the partial $L$-function of the dual $M^\vee$ of the motive $M$ of \cite[\S1]{MR1474159}.
For instance, if $\bf G$ is split over $F$ then $\Delta_G^S(s)=\prod_{i=1}^r\zeta_F^S(s+d_i-1)$ where $d_1,\dots,d_r$ are the exponents of $\bf G$
and $\zeta_F^S(s)=\prod_{v\notin S}\zeta_{F_v}(s)$, $\Re s>1$ is the partial Dedekind zeta function.
In general, $\Delta_G^S(s)$ has a pole of order $l$ at $s=1$.
By the unramified computation \eqref{eq: unramcomp} $\mainconst{\pi}$ does not depend on the choice of $S$.
It also does not depend on the choice of Haar measure on $G(\A)$. However, it depends on the automorphic realization of $\pi$,
not just on $\pi$ as an abstract representation, unless of course $\pi$ has multiplicity one in the cuspidal spectrum.

Note that
\begin{equation} \label{eq: twistconst}
\mainconst{\pi\otimes\omega}=\mainconst{\pi}
\end{equation}
for any character $\omega$ of $G(F)\bs G(\A)$.

\begin{remark}
In principle, we could have considered the discrete, rather than cuspidal spectrum.
However, if we admit Arthur's conjectures (see below) then $\whit^{\psi_N}$
vanishes on (the smooth part) of the residual spectrum of $G$ (namely, on the orthogonal
complement of the cuspidal spectrum in the discrete spectrum of $L^2(G(F)\bs G(\A)^1)$).
For the group $G=\GL_m$, the vanishing of $\whit^{\psi_N}$ on the residual spectrum
follows (unconditionally) from the description of the latter by M\oe glin--Waldspurger \cite{MR1026752}.
\end{remark}

A similar relation holds for $\Mp_n$.
As usual, $\Mp_n(\A)$ is the two-fold cover of $\Sp_n(\A)$ which splits over $\Sp_n(F)$.
Let $\bf N$ be the standard maximal unipotent subgroup of $\Sp_n$ and $\psi_N$ a non-degenerate character of $N(\A)$
(viewed as a subgroup of $\Mp_n(\A)$), trivial on $N(F)$.
Let $\psi$ be the corresponding character of $F\bs\A$ as in the local case (see \S\ref{sec: metalocal}).
The pairing on $\Sp_n(F)\bs\Mp_n(\A)$ of two genuine functions $\varphi_1$, $\varphi_2$
is defined by $\int_{\Sp_n(F)\bs\Sp_n(\A)}\varphi_1(g)\varphi_2(g)\ dg$.
Let $\tilde\pi$ be an irreducible genuine cuspidal automorphic representation of $\Mp_n$.
For simplicity assume that the $\psi$-theta lift of $\tilde\pi$ to $\SO(2n-1)$ vanishes.
In this case it is a consequence of the descent method of Ginzburg--Rallis--Soudry \cite{MR2848523} that $L^S(1,\tilde\pi,\Ad)$ is defined.
(See \S\ref{sec: classical groups} below.) By \eqref{eq: metunram}
\begin{equation} \label{eq: metacpi}
\whit^{\psi_N}(\varphi)\whit^{\psi_N^{-1}}(\varphi^\vee)=
(\mainconst{\tilde\pi}\vol(\Sp_n(F)\bs\Mp_n(\A)))^{-1} L_{\psi}^S(\frac12,\tilde\pi)
\frac{\Delta_{\Sp_n}^S(s)}{L^S(1,\tilde\pi,\Ad)}(\varphi,\varphi^\vee)_{\tilde\pi_S}^{\psi_N}
\end{equation}
where $\mainconst{\tilde\pi}$ is independent of $S$ or the Haar measure on $\Sp_n(\A)$.
Here we take the Haar measure on $\Mp_n(\A)$ so that $\vol(\Sp_n(F)\bs\Mp_n(\A))=2\vol(\Sp_n(F)\bs\Sp_n(\A))$.
Note that $\Delta_{\Sp_n}^S(s)=\prod_{i=1}^n\zeta_F^S(2i)$.

The main question that we shall study in this paper is what is the value of $\mainconst{\pi}$
both in the algebraic and the metaplectic case.

\subsection{}
At this point we will assume Arthur's conjectures \cite{MR1021499}.
Actually, here we only care about the discrete spectrum but we will need a slightly stronger form of the conjectures
which is not strictly speaking made explicit in [ibid.].
(See \cite[Conjecture 2A]{MR2331344}, \cite{1203.0039}, \cite{MR2784745} and \cite{MR2320317} for follow-up conjectures.
Our formulation takes these supplements into account, but it is phrased somewhat differently.)
First, we admit the existence of the Langlands group $\Langlands_F$, a locally compact group
whose irreducible $n$-dimensional representations classify cuspidal representations
of $\GL_n(\A)$ \cite{MR546619}. Let $W_F$ be the Weil group of $F$. The group $\Langlands_F$ comes equipped with a surjective homomorphism
\begin{equation} \label{eq: langweil}
\Langlands_F\rightarrow W_F
\end{equation}
whose kernel is a perfect group, as well as with embeddings of the local Weil groups $W_{F_v}\hookrightarrow\Langlands_F$ for any place $v$.
Let $^LG=\widehat G\rtimes W_F$ be the $L$-group of $G$, where $\widehat G$ is the complex dual group of $\bf G$ and $W_F$ acting through
the action of $\Gamma:=\Gal(E/F)$ where $E/F$ is a finite Galois extension over which $\bf G$ splits.

Recall that
\begin{equation} \label{eq: zhatg}
Z(\widehat G)=\Ker[\widehat G\rightarrow \widehat{G_{\SC}}]
\end{equation}
where $\bf G_{\SC}$ is the simply connected cover of the derived group of $\bf G$
(with a natural map $\bf G_{\SC}\rightarrow\bf G$).
We denote by $Z(\widehat G)_u$ the maximal compact subgroup of $Z(\widehat G)$.

By the restriction-inflation sequence the map \eqref{eq: langweil} gives rise to isomorphisms
\[
H^1(\Langlands_F,Z(\widehat G))=H^1(W_F,Z(\widehat G)).
\]
(All cocycles are understood to be continuous; $H^1$ is defined with respect to continuous cocycles.)
Define
\begin{gather*}
\ker^1(\Langlands_F,Z(\widehat G))=\Ker[H^1(\Langlands_F,Z(\widehat G))\rightarrow\prod_v H^1(W_{F_v},Z(\widehat G))],\\
H^1_{\loc}(\Langlands_F,Z(\widehat G))=H^1(\Langlands_F,Z(\widehat G))/\ker^1(\Langlands_F,Z(\widehat G)).
\end{gather*}
Once again, we have
\begin{gather*}
\ker^1(\Langlands_F,Z(\widehat G))=\ker^1(W_F,Z(\widehat G)):=\Ker[H^1(W_F,Z(\widehat G))\rightarrow\prod_v H^1(W_{F_v},Z(\widehat G))],\\
H^1_{\loc}(\Langlands_F,Z(\widehat G))=H^1_{\loc}(W_F,Z(\widehat G)):=H^1(W_F,Z(\widehat G))/\ker^1(W_F,Z(\widehat G)).
\end{gather*}
In particular, $\ker^1(\Langlands_F,Z(\widehat G))$ is finite.
We also write $H^1_{\loc}(\Langlands_F,Z(\widehat G)_u)$ for the image of $H^1(\Langlands_F,Z(\widehat G)_u)$ in
$H^1_{\loc}(\Langlands_F,Z(\widehat G))$. Once again, this coincides with $H^1_{\loc}(W_F,Z(\widehat G)_u)$, defined analogously.
By Lemma \ref{lem: characters}, the group $H^1_{\loc}(W_F,Z(\widehat G))$ (resp., $H^1_{\loc}(W_F,Z(\widehat G)_u)$) is isomorphic to the group of characters
(resp., unitary characters) of $G(F)\bs G(\A)$.

We consider the set $\params(G)$ of elliptic Arthur's parameters.
The elements of $\params(G)$ are equivalence classes of homomorphisms $\phi:\Langlands_F\times\SL_2(\C)\rightarrow\,^LG$
satisfying the following properties:
\begin{enumerate}
\item The composition of $\phi\rest_{\Langlands_F}$ with the canonical map $^LG\rightarrow W_F$ is the map \eqref{eq: langweil}.
\item The projection of the image of $\phi\rest_{\Langlands_F}$ to $\widehat G$ is bounded.
\item The restriction of $\phi$ to $\SL_2(\C)$ (the so-called $\SL_2$-type of $\phi$) is an algebraic homomorphism to $\widehat G$.
\item \label{part: elliptic} The centralizer $C_{\widehat G}(\phi)$ of the image of $\phi$ in $\widehat G$ is finite modulo
$Z(\widehat G)^\Gamma=Z(\widehat G)\cap C_{\widehat G}(\phi)$.
\end{enumerate}
Two such homomorphisms $\phi_1$, $\phi_2$ are equivalent if there exist $s\in\widehat G$ and a $1$-cocycle $z$ of
$\Langlands_F$ in $Z(\widehat G)$ whose class in $H^1(\Langlands_F,Z(\widehat G))$ lies in $\ker^1(\Langlands_F,Z(\widehat G))$
such that $s\phi_1(x,y)s^{-1}=z(x)\phi_2(x,y)$ for all $x\in\Langlands_F$, $y\in\SL_2(\C)$.

Given $\phi\in\params(G)$
let $D_\phi$ be the subgroup of $\widehat G$ consisting of elements $s\in\widehat G$ such that\footnote{Here $[x,y]$ is the commutator
$xyx^{-1}y^{-1}$} $[s,\Img(\phi)]\subset Z(\widehat G)$.
(In particular, $s$ commutes with the image of $\SL_2(\C)$.)
Then $D_\phi$ contains $Z(\widehat G)$ and $C_{\widehat G}(\phi)$ and the map attaching to $s\in D_\phi$ the $1$-cocycle $x\mapsto[s,\phi(x)]$ of $\Langlands_F$ in $Z(\widehat G)$
induces a homomorphism $D_\phi/Z(\widehat G)\rightarrow H^1(\Langlands_F,Z(\widehat G))$ whose kernel can be identified with
$C_{\widehat G}(\phi)/Z(\widehat G)^\Gamma$.
We denote by $S_\phi$ (resp., $\cent_\phi$) the inverse image of $\ker^1(\Langlands_F,Z(\widehat G))$ in $D_\phi$ (resp., $D_\phi/Z(\widehat G)$).
Note that the finiteness of $C_{\widehat G}(\phi)/Z(\widehat G)^\Gamma$ is equivalent to the finiteness of $\cent_\phi$.
If $\Gamma$ is cyclic then in fact $\ker^1(\Langlands_F,Z(\widehat G))=1$ and $\cent_\phi=C_{\widehat G}(\phi)/Z(\widehat G)^\Gamma$.

We will mostly deal with $\phi$'s which are of Ramanujan type, i.e., with trivial $\SL_2$-type.
We denote by $\tparams(G)\subset\params(G)$ the set of parameters of Ramanujan type.
We can think of them as homomorphisms $\phi:\Langlands_F\rightarrow\,^LG$
(Langlands's parameters) up the equivalence relation above (which is simply conjugation if $G$ splits over a cyclic extension)
-- cf.~\cite[\S10]{MR757954}.

The main assertion is the existence of a canonical orthogonal decomposition\footnote{Here $\overline{\canArthur_\phi}$ denotes the $L^2$-closure}
\begin{equation} \label{eq: Arthur decomposition}
L_{\disc}^2(G(F)\bs G(\A)^1)=\mathop{\widehat\oplus}\limits_{\phi\in\params(G)}\overline{\canArthur_\phi}
\end{equation}
into subspaces that are invariant under the adjoint action of $G_{\Gad}(F)$, where the unramified components of the irreducible constituents of $\canArthur_\phi$
are determined by $\phi$ (or more precisely, by the Langlands parameter $w\mapsto\phi(w,\sm{\abs{w}^{\frac12}}00{\abs{w}^{-\frac12}})$
associated to $\phi$).
Of course, this condition by itself \emph{does not} determine $\canArthur_\phi$ uniquely.

Multiplying a homomorphism $\phi:\Langlands_F\times\SL_2(\C)\rightarrow\,^LG$
by a $1$-cocycle of $\Langlands_F$ in $Z(\widehat G)_u$ gives rise to an action, denoted $\alpha\cdot\phi$,
of $H^1_{\loc}(\Langlands_F,Z(\widehat G)_u)$ on $\params(G)$ and $\tparams(G)$.
We will give ourselves that if $\omega$ is the unitary character of $G(F)\bs G(\A)$
corresponding to $\alpha\in H^1_{\loc}(\Langlands_F,Z(\widehat G)_u)$ then
\begin{equation} \label{eq: twistcompatible}
\canArthur_{\alpha\cdot\phi}=\canArthur_\phi\otimes\omega:=\{\varphi\omega:\varphi\in\canArthur_\phi\}.
\end{equation}

If $\phi$ is not of Ramanujan type then $\whit^{\psi_N}$ vanishes on $\canArthur_\phi$
(for local reasons -- see \cite{MR2784745}).
Suppose that $\phi$ is of Ramanujan type. Then every constituent of $\canArthur_\phi$ is tempered
almost everywhere (in fact, conjecturally everywhere \cite{MR2784745}), hence cuspidal \cite{MR733320}.
We will \emph{assume} that the orthogonal complement $\canArthur_\phi^{\psi_N}$ of the space
\[
\{\varphi\in\canArthur_\phi:\whit^{\psi_N}(\cdot,\varphi)\equiv0\}
\]
in $\canArthur_\phi$ is irreducible (and in particular non-zero) and
we will denote by $\pi^{\psi_N}(\phi)$ the irreducible automorphic cuspidal
representation of $G(\A)$ on $\canArthur_\phi^{\psi_N}$. \label{sec: pipsi}


We remark that in the function field case V. Lafforgue has recently obtained (assuming for simplicity that $\ker^1(W_F,Z(\widehat G))=1$)
a canonical decomposition analogous to \eqref{eq: Arthur decomposition} for the cuspidal spectrum,
parameterized by Langlands's parameters (which in this case amount to suitable homomorphisms $\Gal(\bar F/F)\rightarrow\,^LG$
up to conjugation by $\widehat G$) \cite{1209.5352}.
Every parameter which contributes should arise (conjecturally) from an Arthur parameter (which is uniquely determined).
However, it is not clear at this stage whether one can attach a canonical irreducible automorphic cuspidal representation which plays the role of $\pi^{\psi_N}(\phi)$.

\begin{remark}
It will be interesting to give an alternative description, independent of Arthur's conjectures,
of the space $\oplus_{\phi\in\tparams(G)}\canArthur_\phi^{\psi_N}$.
Of course, for $\GL_m$ this space coincides with the cuspidal spectrum.
\end{remark}

Assuming the above setup we can now formulate the conjecture.
\begin{conjecture} \label{conj: main}
For any $\phi\in\tparams(G)$ we have $\mainconst{\pi^{\psi_N}(\phi)}=\abs{\cent_\phi}$.
\end{conjecture}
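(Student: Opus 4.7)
The plan is to follow the template set by the $\GL_n$ case (\S\ref{sec: GLn}) and the analogy with Ichino--Ikeda. First, using the compatibility of both sides with restriction to subgroups containing the derived group and with projection by a central induced torus (as developed in \S\ref{sec: conjWhit}), one reduces to the case where $G$ is semisimple and simply connected. I would then focus initially on quasi-split classical groups and $\Mp_n$, where the conjecture takes the concrete form of Conjectures \ref{conj: globalclassical} and \ref{conj: metplectic global}; the exceptional groups must be handled separately.

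The core idea is to apply the Ginzburg--Rallis--Soudry descent construction, which realizes $\pi^{\psi_N}(\phi)$ as the $\psi_N$-descent of a suitable isobaric sum $\pi_1\boxplus\dots\boxplus\pi_k$ of self-dual cuspidal representations of general linear groups. The group $\cent_\phi$ is then an elementary $2$-group of rank $k-1$ (with the expected halving in the $\SO(2n)$ fixed case), which directly yields the combinatorial factor on the right-hand side of the conjecture. I would then unfold $\whit^{\psi_N}(\varphi)\whit^{\psi_N^{-1}}(\varphi^\vee)$ applied to descent sections, producing a global integral amenable to Rankin--Selberg theory for the $\pi_i$, where $\mainconst{\pi_i}=1$ is already known (\S\ref{sec: GLn}). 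The predicted powers of $2$ should emerge from simple poles at $s=1$ of $L^S(s,\pi_i,\sym^2)$ or $L^S(s,\pi_i,\wedge^2)$ -- precisely the factors that govern the reducibility of the local induced representations entering the descent and encode the self-duality type of each $\pi_i$. Comparing the result with the factorization of $L^S(s,\pi,\Ad)$ implicit in \eqref{def: cpi} should then give the identity.

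The principal obstacle is that, upon localizing this unfolded global identity, one is forced into a nontrivial \emph{local} conjectural identity between the regularized matrix coefficient integral \eqref{eq: Fourier MC} on $G(F_v)$ and the local pairing of Whittaker functions produced by the descent. At almost all places this collapses to the Casselman--Shalika computation of Proposition \ref{prop: unram}, but at ramified non-archimedean places, and especially at archimedean places, its verification is substantial -- this is precisely what the announced sequel addresses, settling it for $p$-adic metaplectic groups. A second, structurally unrelated difficulty is the case of exceptional groups, where no analogue of descent is currently available; one would presumably need an extension of descent via converse theorems, or a relative trace formula of Jacquet--Mao type in the style of \cite{MR983610, MR2322488, MR2656089}. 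At that level of generality Conjecture \ref{conj: main} should at present be regarded as genuinely open.
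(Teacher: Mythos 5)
The statement you are asked about is a \emph{conjecture}: the paper does not prove it, and neither do you -- your text is a research program, which you yourself correctly flag as open at the end. As a roadmap it coincides with the one the paper itself lays out: the reduction to semisimple simply connected groups via \S\ref{sec: restder}--\ref{sec: projG}, the $\GL_m$ case via Rankin--Selberg (\S\ref{sec: GLn}), the reformulation for classical and metaplectic groups through the Ginzburg--Rallis--Soudry descent (Conjecture \ref{conj: classical}), and the deferral of the remaining work to a local conjectural identity treated in the announced sequel (proved there only for $p$-adic metaplectic groups); the low-rank cases of \S\ref{sec: examples} are the only instances actually verified. One point to be careful about: in the $\SO(2n)$ case you cannot simply say that the descent realizes $\pi^{\psi_N}(\phi)$ -- when all $n_i$ are even and the descent $\sigma$ is irreducible, it is $\theta$-invariant and is \emph{not} one of the $\pi^{\psi_N}(\phi_i)$, and the paper instead relates the constants by $\mainconst{\sigma}=\frac14(\mainconst{\pi^{\psi_N}(\phi_1)}+\mainconst{\pi^{\psi_N}(\phi_2)})$; this is exactly why Conjecture \ref{conj: classical} is stated for constituents of $\sigma^{\psi_N}(\{\pi_1,\dots,\pi_k\})$ with the stabilizer factor $s$, rather than directly for $\pi^{\psi_N}(\phi)$. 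With that caveat, your sketch is faithful to the paper's strategy, but it should not be presented as a proof of Conjecture \ref{conj: main}.
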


\begin{remark} \label{rem: indepsi}
Let $\bf T$ be a maximal torus of $\bf G$ normalizing $\bf N$ and let ${\bf T}_{\Gad}={\bf T}/{\bf Z}({\bf G})$.
Then for any $\phi\in\tparams(G)$ and $t\in T_{\Gad}(F)$
we have $\pi^{\psi_N\circ\Ad(t)}(\phi)=\{\varphi\circ\Ad(t^{-1}):\varphi\in\pi^{\psi_N}(\phi)\}$.
Therefore, Conjecture \ref{conj: main} is independent of the choice of $\psi_N$, since $T_{\Gad}(F)$ acts transitively
on the set of non-degenerate characters.

Also, if $\alpha\in H^1_{\loc}(\Langlands_F,Z(\widehat G)_u)$ and $\omega$ is the corresponding unitary character of $G(F)\bs G(\A)$
then by \eqref{eq: twistcompatible}
\begin{equation} \label{eq: gentwist}
\pi^{\psi_N}(\alpha\cdot\phi)=\pi^{\psi_N}(\phi)\otimes\omega.
\end{equation}
This is of course consistent with Conjecture \ref{conj: main} (using \eqref{eq: twistconst})
since $\cent_{\alpha\cdot\phi}=\cent_\phi$.
\end{remark}


Regardless of Arthur's conjectures one can consider, following Piatetski-Shapiro,
the orthogonal complement $L^2_{\cusp,\psi_N}(G(F)\bs G(\A)^1)$ in $L^2_{\cusp}(G(F)\bs G(\A)^1)$ of the subspace
\[
\{\varphi\in L^2_{\cusp}(G(F)\bs G(\A)^1):\whit^{\psi_N}(\cdot,\varphi)\equiv0\text{ almost everywhere}\}.
\]
By local uniqueness of Whittaker model, the space $L^2_{\cusp,\psi_N}(G(F)\bs G(\A)^1)$ is multiplicity free (cf.~\cite{MR546599}).
Hence, we could have tried to formulate our conjectures for the irreducible constituents of $L^2_{\cusp,\psi_N}(G(F)\bs G(\A)^1)$
instead of the hypothetical spaces $\pi^{\psi_N}(\phi)$.
(Note that if $\pi$ is such a constituent then $\dual\pi$ is realized in $L^2_{\cusp,\psi_N^{-1}}(G(F)\bs G(\A)^1)$.)
It is not clear whether in general one can expect a nice formula.
However, in certain cases we can hope to get a handle on the space $L^2_{\cusp,\psi_N}(G(F)\bs G(\A)^1)$ and the constants
$\mainconst{\pi}$ for its constituents. (See the discussion in \S\ref{sec: classical groups} below about classical groups.)

\subsection{} \label{sec: restder}
Consider the case of a connected reductive group $\bf\widetilde G$ defined and quasi-split over $F$ and a connected algebraic subgroup
$\bf G$ of $\bf\widetilde G$ defined over $F$ containing the derived group $\bf\widetilde G^{\der}$ of $\bf\widetilde G$.
This case was considered by Hiraga--Saito in \cite{MR2918491} following Labesse--Langlands \cite{MR540902}.
Let $\tilde\pi$ be an irreducible cuspidal automorphic representation of $\widetilde G(\A)$.
Let $X(\tilde\pi)$ be the group of characters $\omega$ of $\widetilde G(\A)$ which are trivial on $\widetilde G(F)G(\A)$ such that
$\tilde\pi\otimes\omega=\tilde\pi$ (as physical spaces). It is a finite group (\cite[Lemma 4.11]{MR2918491}) which may a priori depend
on the automorphic realization of $\tilde\pi$.
Let $\bf T$ be the torus $\bf\widetilde G/\bf G$.
Note that $\Delta_{\widetilde G}^S(s)=\Delta_T^S(s)\Delta_G^S(s)$.

The following result is essentially proved in \cite{MR2918491}. For convenience we include some details.

\begin{lemma} \label{lem: restG}
Suppose that $\tilde\pi$ is an irreducible cuspidal $\psi_N$-generic representation of $\widetilde G(\A)$ realized on $V_{\tilde\pi}$.
Assume that the space of $\tilde\pi\otimes\omega$ is orthogonal to that of $\tilde\pi$ for any $\omega\notin X(\tilde\pi)$.
(This condition is of course automatically satisfied if the cuspidal multiplicity of $\tilde\pi$ is one.)
Let $V'_{\tilde\pi}=\{\varphi\rest_{G(\A)}:\varphi\in V_{\tilde\pi}\}$. Then
\begin{enumerate}
\item $V'_{\tilde\pi}$ is the direct sum of distinct irreducible cuspidal representations of $G(\A)$.
\item There is a unique $\psi_N$-generic irreducible constituent of $V'_{\tilde\pi}$.
\item If $\pi$ is the $\psi_N$-generic irreducible constituent of $V'_{\tilde\pi}$ then
$\mainconst{\pi}=\abs{X(\tilde\pi)}\mainconst{\tilde\pi}$.
\end{enumerate}
\end{lemma}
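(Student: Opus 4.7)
The plan is to address parts (1) and (2) as an instance of the restriction theory of Hiraga--Saito \cite{MR2918491}. The finite group $A$ of automorphic characters of $\widetilde G(\A)/\widetilde G(F)G(\A)$ acts by twisting on the cuspidal spectrum; since every such $\omega$ restricts trivially to $G(\A)$, the restriction $V'_{\tilde\pi}$ is stable under this action. Under the orthogonality hypothesis, the family $\{V_{\tilde\pi \otimes \omega} : \omega \in A\}$ splits into $|A/X(\tilde\pi)|$ pairwise orthogonal cuspidal subspaces, and a Clifford--Mackey argument yields the multiplicity-free decomposition in (1). For (2), the key point is local: since $\dim\JF_{\psi_N}(\tilde\pi_v) = 1$ and this Jacquet module, viewed as a $G(F_v)$-module via $N\subset G$, coincides with $\bigoplus_i \JF_{\psi_N}(\pi_v^{(i)})$ where $\tilde\pi_v|_{G(F_v)} = \bigoplus_i \pi_v^{(i)}$, exactly one local constituent is $\psi_N$-generic. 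The unique global $\psi_N$-generic $\pi$ is then the restricted tensor product of these.

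For (3), I would choose $\tilde\varphi \in V_{\tilde\pi}$ whose restriction $\varphi := \tilde\varphi|_{G(\A)}$ lies in $V_\pi$, and likewise for the contragredient side. Because $N(\A) \subset G(\A)$, we have $\whit^{\psi_N}(\tilde\varphi) = \whit^{\psi_N}(\varphi)$ and $\whit^{\psi_N^{-1}}(\tilde\varphi^\vee) = \whit^{\psi_N^{-1}}(\varphi^\vee)$. Applying \eqref{def: cpi} to both $\tilde\pi$ and $\pi$ and forming the ratio, the quantity $\mainconst{\pi}/\mainconst{\tilde\pi}$ becomes a product of three factors: the quotient of L-ratios at $s=1$, the volume ratio $\vol(\widetilde G(F)\bs\widetilde G(\A)^1)/\vol(G(F)\bs G(\A)^1)$, and the ratio of local Whittaker pairings on $S$. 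Using $\Delta_{\widetilde G}^S(s) = \Delta_T^S(s)\Delta_G^S(s)$ (stated in the setup) together with the parallel factorization $L^S(s,\tilde\pi,\Ad) = \Delta_T^S(s) L^S(s,\pi,\Ad)$, which reflects the decomposition $\mathrm{Lie}(\widehat{\widetilde G}) = \mathrm{Lie}(\widehat T) \oplus \mathrm{Lie}(\widehat G)$ of $^L\widetilde G$-modules (with trivial $\widehat T$-action on the first summand), the L-ratios cancel. The same cancellation inside \eqref{eq: unramcomp} shows that under compatibly normalized local pairings, the local Whittaker pairings also agree at $v\notin S$, so the Whittaker-pairing contribution reduces to the local comparison at $S$ -- which globally coincides with the ratio of global inner products $(\tilde\varphi,\tilde\varphi^\vee)_{\tilde\pi}/(\varphi,\varphi^\vee)_\pi$.

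The hard part will be to identify this remaining factor as $|X(\tilde\pi)|$. The key input is an inner-product identity
\[
(\tilde\varphi, \tilde\varphi^\vee)_{\tilde\pi} = |X(\tilde\pi)|^{-1} \cdot \vol(\widetilde G(F)G(\A)\bs\widetilde G(\A)^1) \cdot (\varphi, \varphi^\vee)_\pi,
\]
a Mackey/Clifford-type computation in the style of \cite{MR2918491}: the factor $|X(\tilde\pi)|^{-1}$ reflects the size of the $X(\tilde\pi)$-isotypic component containing the realization of $V_\pi$ inside $V_{\tilde\pi}$ (equivalently, the fact that, by Schur, multiplication by $\omega\in X(\tilde\pi)$ distinguishes $|X(\tilde\pi)|$ eigenspaces on each $G(\A)$-isotypic component of $V_{\tilde\pi}$, of which only the trivial one descends to $V'_{\tilde\pi}$). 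Combining this with the fibration identity $\vol(\widetilde G(F)\bs\widetilde G(\A)^1) = \vol(\widetilde G(F)G(\A)\bs\widetilde G(\A)^1) \cdot \vol(G(F)\bs G(\A)^1)$ yields $\mainconst{\pi} = |X(\tilde\pi)|\,\mainconst{\tilde\pi}$. The principal obstacle is the careful bookkeeping of Tamagawa normalizations and the precise orbit-theoretic verification of the $|X(\tilde\pi)|^{-1}$ factor in the inner-product identity, which requires unfolding the action of $A$ on the cuspidal realizations of $\tilde\pi$ and tracking how the $X(\tilde\pi)$-eigenspace decomposition interacts with the $L^2$ pairing.
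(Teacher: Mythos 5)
Your overall reduction in part (3) is the same as the paper's: lift $\varphi$ to $\tilde\varphi$, use $\whit^{\psi_N}(\varphi)=\whit^{\psi_N}(\tilde\varphi)$, cancel the $L$-ratios via $\Delta_{\widetilde G}^S=\Delta_T^S\Delta_G^S$ and $L^S(s,\tilde\pi,\Ad)=\Delta_T^S(s)L^S(s,\pi,\Ad)$, and note that the pairings entering \eqref{def: cpi} are proportional with ratio the ratio of global inner products; the inner-product identity you isolate is indeed equivalent to what the paper establishes. The gap is that this identity is the whole content of part (3), and you neither prove it nor state it correctly. For an arbitrary lift $\tilde\varphi$ of $\varphi$ it is false: lifts are determined only up to the kernel of the restriction map (functions in $V_{\tilde\pi}$ vanishing on $\widetilde G(F)G(\A)$), and modifying $\tilde\varphi$ by such a function changes the left-hand side $(\tilde\varphi,\tilde\varphi^\vee)_{\tilde\pi}$ but not the right-hand side. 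Your Schur/eigenspace justification is also off: each $G(\A)$-isotypic component of $V_{\tilde\pi}$ is irreducible, so multiplication by $\omega\in X(\tilde\pi)$ acts on it by a scalar and there are no ``$\abs{X(\tilde\pi)}$ eigenspaces on each isotypic component''; the factor $\abs{X(\tilde\pi)}$ does not arise from counting eigenspaces at all.

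The mechanism your sketch is missing has two ingredients. First, replace the lift by $\sum_{\omega\in X(\tilde\pi)}\tilde\varphi\,\omega$, i.e. project to the trivial $X(\tilde\pi)$-eigenspace, so that $\tilde\varphi$ is supported on $\cap_{\omega\in X(\tilde\pi)}\Ker\omega$ (this is the correct form of your ``only the trivial eigenspace descends'' remark, applied to the lift). Second, expand $(\varphi,\varphi^\vee)_{G(F)\bs G(\A)^1}$ by Fourier inversion (Poisson summation) along the compact abelian group $\widetilde G(F)G(\A)^1\bs\widetilde G(\A)^1$: up to the volume normalizations you already wrote down, it equals the sum over \emph{all} characters $\omega$ of that group of $(\tilde\varphi\omega,\tilde\varphi^\vee)_{\widetilde G(F)\bs\widetilde G(\A)^1}$. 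The orthogonality hypothesis of the lemma --- which you invoke only for part (1) and never in part (3) --- is exactly what kills the terms with $\omega\notin X(\tilde\pi)$, while the choice of lift makes the surviving $\abs{X(\tilde\pi)}$ terms all equal to $(\tilde\varphi,\tilde\varphi^\vee)_{\tilde\pi}$; this gives your identity and hence $\mainconst{\pi}=\abs{X(\tilde\pi)}\mainconst{\tilde\pi}$. Parts (1) and (2) are fine in spirit (the paper likewise quotes Hiraga--Saito for multiplicity-free local restriction and uses uniqueness of the local Whittaker functional), but note that the multiplicity-freeness in (1) comes from that local input and genericity, not from the orthogonality hypothesis plus a Clifford--Mackey argument.
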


\begin{proof}
The first property follows from the fact that for any place $v$, the restriction of $\tilde\pi_v$ to $G(F_v)$ is a direct sum of distinct irreducible
representations (since $\pi_v$ is generic -- see [ibid., Ch. 3]).

It is clear that $\whit^{\psi_N}$ does not vanish on some irreducible constituent of $V'_{\tilde\pi}$.
On the other hand, $\tilde\pi_v$ admits a unique irreducible constituent which is $\psi_{N(F_v)}$-generic.
The second property follows.

Let $\varphi_i$, $i=1,2$ be in the space of $\pi$ and let $\tilde\varphi_i$ be in the space of $\tilde\pi$ such that
$\varphi_i=\tilde\varphi_i\rest_{G(\A)}$.
Replacing $\tilde\varphi_i$ by $\sum_{\omega\in X(\tilde\pi)}\tilde\varphi_i\omega$, we may assume that $\tilde\varphi_i$
is supported in $\cap_{\omega\in X(\tilde\pi)}\Ker\omega$. By the Poisson summation formula we have
\[
\vol(G(F)\bs G(\A)^1)^{-1}(\varphi_1,\varphi_2)_{G(F)\bs G(\A)^1}=\vol(\widetilde G(F)\bs\widetilde G(\A)^1)^{-1}
\sum_\omega(\tilde\varphi_1\omega,\tilde\varphi_2)_{\widetilde G(F)\bs\widetilde G(\A)^1}
\]
where $\omega$ ranges over the characters of the compact abelian group $\widetilde G(F)G(\A)^1\bs\widetilde G(\A)^1$.
By the condition on $\tilde\pi$, only $\omega\in X(\tilde\pi)$ give a (possibly) non-zero contribution. By the conditions
on $\tilde\varphi_i$ we therefore get
\[
\vol(G(F)\bs G(\A)^1)^{-1}(\varphi_1,\varphi_2)_{G(F)\bs G(\A)^1}=\vol(\widetilde G(F)\bs\widetilde G(\A)^1)^{-1}
\abs{X(\tilde\pi)}(\tilde\varphi_1,\tilde\varphi_2)_{\widetilde G(F)\bs\widetilde G(\A)^1}.
\]
The relation between $\mainconst{\pi}$ and $\mainconst{\tilde\pi}$ follows.
\end{proof}

Let us derive an analogous result at the level of parameters.
The embedding $\bf G\subset\bf\widetilde G$ gives rise to a homomorphism $^L\widetilde G\rightarrow\,^LG$ and hence to a map
\begin{equation} \label{eq: prmstG}
\params(\widetilde G)\rightarrow\params(G).
\end{equation}
Note that if $\tilde\phi\mapsto\phi$ under this map then the $\SL_2$ type of $\tilde\phi$ is determined by that of $\phi$.

Let $\tilde\phi\in\params(\widetilde G)$ and define
\[
X(\tilde\phi)=\{\alpha\in\Ker [H^1_{\loc}(\Langlands_F,Z(\widehat{\widetilde G})_u)\rightarrow H^1_{\loc}(\Langlands_F,Z(\widehat G)_u)]:
\alpha\cdot\tilde\phi\sim\tilde\phi\}.
\]
Let $\phi\in\params(G)$ be the image of $\tilde\phi$ under the map \eqref{eq: prmstG}.

\begin{lemma} \label{lem: paramside}
We have a natural short exact sequence
\[
1\rightarrow\cent_{\tilde\phi}\xrightarrow{\iota}\cent_\phi\xrightarrow{\kappa} X(\tilde\phi)\rightarrow1.
\]
Hence, $\abs{\cent_\phi}=\abs{\cent_{\tilde\phi}}\abs{X(\tilde\phi)}$.
\end{lemma}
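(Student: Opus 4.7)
The plan is to exploit the central exact sequence
\[
1 \to \widehat T \to Z(\widehat{\widetilde G}) \to Z(\widehat G) \to 1,
\]
where $\widehat T := \Ker(\widehat{\widetilde G} \to \widehat G)$ is the dual torus of $T = \widetilde G / G$. This holds because $\widetilde G^{\der} \subset G$ forces $G_{\SC} = \widetilde G_{\SC}$, so by \eqref{eq: zhatg} both $Z(\widehat{\widetilde G})$ and $Z(\widehat G)$ are the kernels to the common $\widehat{G_{\SC}}$; centrality of $\widehat T$ in $\widehat{\widetilde G}$ and surjectivity of $Z(\widehat{\widetilde G}) \to Z(\widehat G)$ follow at once. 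The key consequence, used throughout, is that the preimage in $\widehat{\widetilde G}$ of $Z(\widehat G)$ is exactly $Z(\widehat{\widetilde G})$.

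To define $\iota$, I would push $\tilde s \in S_{\tilde\phi}$ forward to its image $s \in \widehat G$; then $x \mapsto [s,\phi(x)]$ is the projection of $x \mapsto [\tilde s, \tilde\phi(x)]$, so lies in $Z(\widehat G)$ and represents a class in $\ker^1$, placing $s$ in $S_\phi$. Injectivity of the induced map $\cent_{\tilde\phi} \to \cent_\phi$ is immediate from the exact sequence above. To define $\kappa$, I would lift: given $s \in S_\phi$, pick any $\tilde s \in \widehat{\widetilde G}$ over $s$. Since $[\tilde s, \tilde\phi(x)]$ projects into $Z(\widehat G)$, it lies in $Z(\widehat{\widetilde G})$ by the consequence above; hence $\tilde s \in D_{\tilde\phi}$, and the cocycle $\tilde z(x) := [\tilde s, \tilde\phi(x)]$ takes values in $Z(\widehat{\widetilde G})_u$ by boundedness of $\tilde\phi|_{\Langlands_F}$. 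The identity $\tilde s^{-1}\bigl([\tilde z]\cdot\tilde\phi\bigr)(x,y)\tilde s = \tilde\phi(x,y)$ (using centrality of $\tilde z$) witnesses $[\tilde z]\cdot\tilde\phi \sim \tilde\phi$, while the image of $[\tilde z]$ in $H^1_{\loc}(\Langlands_F, Z(\widehat G)_u)$ vanishes since $s \in \cent_\phi$; thus $\kappa(s) := [\tilde z]$ lies in $X(\tilde\phi)$. Different lifts of $s$ differ by elements of $\widehat T$, which are central, so produce the same $\tilde z$; and if $s \in Z(\widehat G)$ we may take $\tilde s \in Z(\widehat{\widetilde G})$ by surjectivity, yielding $\tilde z \equiv 1$, so $\kappa$ descends to $\cent_\phi$.

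Exactness at $\cent_\phi$ and surjectivity of $\kappa$ then follow by diagram chasing. If $s = \iota(\tilde u)$ with representative $\tilde s_0 \in S_{\tilde\phi}$, taking $\tilde s_0$ as lift gives $[\tilde z] \in \ker^1$, hence $\kappa(s) = 0$; conversely, $\kappa(s) = 0$ forces $[\tilde z] \in \ker^1$ for any lift $\tilde s$, so $\tilde s \in S_{\tilde\phi}$ and $s \in \Img \iota$. For surjectivity, given $\alpha \in X(\tilde\phi)$ represented by a cocycle $\tilde z$, the relation $\alpha \cdot \tilde\phi \sim \tilde\phi$ produces $\tilde s \in \widehat{\widetilde G}$ and a cocycle $\tilde w$ with $[\tilde w] \in \ker^1$ such that $[\tilde s, \tilde\phi(x,y)] = \tilde z(x)^{-1}\tilde w(x)$; the image $s$ of $\tilde s$ in $\widehat G$ then satisfies $s \in \cent_\phi$ and $\kappa(s) = -\alpha$. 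The hard part is bookkeeping the $\ker^1$ ambiguity throughout — the Arthur parameters, the action $\alpha\cdot\phi$, and $\kappa(s)$ itself are all defined only modulo $\ker^1$ — but once the central exact sequence above is in place, all remaining checks are formal.
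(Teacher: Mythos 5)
Your proof is correct and follows essentially the same route as the paper: the same central extension $1\to\widehat T\to Z(\widehat{\widetilde G})\to Z(\widehat G)\to 1$ coming from $G_{\SC}=\widetilde G_{\SC}$ (so that $Z(\widehat{\widetilde G})$ is the full preimage of $Z(\widehat G)$), the same definitions of $\iota$ and of $\kappa$ via an arbitrary lift $\tilde s$ of $s$ with the same well-definedness checks, and the same exactness-at-$\cent_\phi$ and surjectivity arguments unwinding the equivalence relation on parameters modulo $\ker^1$. The only cosmetic difference is that your surjectivity computation lands on $\kappa(\bar s)=\alpha^{-1}$ rather than $\alpha$, which of course suffices since $X(\tilde\phi)$ is a group.
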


\begin{proof}
We have a short exact sequence
\[
1\rightarrow\widehat T\rightarrow\widehat{\widetilde G}\xrightarrow{p}\widehat G\rightarrow1.
\]
Since $\bf\widetilde G^{\der}$ is semisimple, $\bf G^{\der}=\bf\widetilde G^{\der}$ and therefore $\bf G_{\SC}=\bf\widetilde G_{\SC}$.
By \eqref{eq: zhatg}, $\widehat T\subset Z(\widehat{\widetilde G})$ and we get a short exact sequence
\[
1\rightarrow\widehat T\rightarrow Z(\widehat{\widetilde G})\rightarrow Z(\widehat G)\rightarrow1.
\]
In other words,
\begin{equation} \label{eq: invimgZ}
\text{$Z(\widehat{\widetilde G})$ is the inverse image under $p$ of $Z(\widehat G)$.}
\end{equation}

The projection $p$ induces a map $\cent_{\tilde\phi}\xrightarrow{\iota}\cent_\phi$.
We claim that $\iota$ is injective. Indeed, suppose that $\tilde s\in S_{\tilde\phi}$
has trivial image in $\cent_\phi$ under $\iota$.
Then $p(s)\in Z(\widehat G)$ and by \eqref{eq: invimgZ} this implies that $s\in Z(G)$.

Next, we define the map $\cent_\phi\xrightarrow{\kappa} X(\tilde\phi)$.
Let $s\in S_\phi$, i.e. $s\in\widehat G$ is such that $x\mapsto [s,\phi(x)]\in Z(\widehat G)$
defines a locally trivial cocycle of $\Langlands_F$ in $Z(\widehat G)$. Let $\tilde s\in\widehat{\widetilde G}$ be any lift of $s$.
Then again by \eqref{eq: invimgZ}, $x\mapsto [\tilde s,\tilde\phi(x)]\in Z(\widehat{\widetilde G})$ so that it defines
an element $\kappa'(s)$ in $H^1(\Langlands_F,Z(\widehat{\widetilde G})_u)$ whose image in $H^1(\Langlands_F,Z(\widehat G)_u)$
is locally trivial. Clearly $\kappa'(s)$ does not depend on the choice of $\tilde s$ and it depends only on the image
$\bar s$ of $s$ in $\cent_\phi$. We define $\kappa(\bar s)$ to be the image of $\kappa'(s)$ in $H^1_{\loc}(\Langlands_F,Z(\widehat{\widetilde G})_u)$.
Since $[\tilde s,\tilde\phi(x)]\tilde\phi(x)=\tilde s\tilde\phi(x)\tilde s^{-1}$ we have $\kappa(\bar s)\in X(\tilde\phi)$.

It is also clear that $\kappa'(s)$ is locally trivial in $H^1(\Langlands_F,Z(\widehat{\widetilde G}))$ if and only if $\tilde s\in S_{\tilde\phi}$.
Finally we show that $\kappa$ is onto. Suppose that $\beta$ is a $1$-cocycle in $H^1(\Langlands_F,Z(\widehat{\widetilde G})_u)$
such that $\beta\cdot\tilde\phi\sim\tilde\phi$ and the image of $\beta$ in $H^1(\Langlands_F,Z(\widehat G)_u)$ is locally trivial.
Then there exists $\tilde s\in\widehat{\widetilde G}$ and a $1$-cocycle $\gamma$ of $\Langlands_F$ in $Z(\widehat{\widetilde G})$
whose image in $H^1(\Langlands_F,Z(\widehat{\widetilde G})_u)$ is locally trivial
such that $\tilde s\tilde\phi(x)\tilde s^{-1}=\beta(x)\gamma(x)\tilde\phi(x)$ for all $x\in\Langlands_F$.
If $s=p(\tilde s)$ we infer that $s\phi(x)s^{-1}=p(\beta(x)\gamma(x))\phi(x)$, so that $s\in S_\phi$
and $\kappa(\bar s)=\beta$.
\end{proof}

Let $\tilde\phi\in\params(\widetilde G)$ and $\phi\in\params(G)$ be as before.
It is natural to assume that $\canArthur_\phi$ is the image of the space $\canArthur_{\tilde\phi}$ under restriction of functions
to $G(\A)$. In view of \cite[Ch.~4]{MR2918491} it is also natural to assume that the map
$H^1(\Langlands_F,\widehat{\widetilde G})\rightarrow H^1(\Langlands_F,\widehat G)$
is onto, so that the map $\params(\widetilde G)\rightarrow\params(G)$ is onto.
For an analogous result for Weil groups see \cite{MR795713}.

\begin{corollary} \label{cor: subgroup}
Assume the above. Then Conjecture \ref{conj: main} holds for $\bf\widetilde G$ if and only if it holds for $\bf G$.
\end{corollary}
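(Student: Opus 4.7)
The plan is to combine Lemmas \ref{lem: restG} and \ref{lem: paramside} with the compatibility assumptions built into the statement. Fix $\phi\in\tparams(G)$; by the surjectivity assumption, choose a lift $\tilde\phi\in\params(\widetilde G)$. Since $\phi$ has trivial $\SL_2$-type and this type is preserved under \eqref{eq: prmstG}, the lift $\tilde\phi$ lies in $\tparams(\widetilde G)$. Set $\tilde\pi=\pi^{\psi_N}(\tilde\phi)$ and $\pi=\pi^{\psi_N}(\phi)$. By the hypothesis that $\canArthur_\phi$ is the image of $\canArthur_{\tilde\phi}$ under restriction to $G(\A)$, together with local uniqueness of the $\psi_N$-generic component, $\pi$ is realized as the unique $\psi_N$-generic irreducible constituent of $V'_{\tilde\pi}=\{\varphi\rest_{G(\A)}:\varphi\in V_{\tilde\pi}\}$, which matches the setup of Lemma \ref{lem: restG}.

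Next I would verify the orthogonality hypothesis of Lemma \ref{lem: restG}, i.e.\ that $\tilde\pi\otimes\omega$ is orthogonal to $\tilde\pi$ for $\omega\notin X(\tilde\pi)$. Under the Arthur decomposition, distinct Arthur packets are orthogonal, and by \eqref{eq: twistcompatible} we have $\canArthur_{\alpha\cdot\tilde\phi}=\canArthur_{\tilde\phi}\otimes\omega$ for $\alpha\leftrightarrow\omega$. Thus if $\tilde\pi\otimes\omega$ is not orthogonal to $\tilde\pi$, then $\alpha\cdot\tilde\phi\sim\tilde\phi$, so by \eqref{eq: gentwist} we get $\tilde\pi\otimes\omega\cong\tilde\pi$; the condition that $\omega$ be trivial on $\widetilde G(F)G(\A)$ translates under the local Langlands dictionary (Lemma \ref{lem: characters}) to $\alpha$ lying in $\Ker[H^1_{\loc}(\Langlands_F,Z(\widehat{\widetilde G})_u)\to H^1_{\loc}(\Langlands_F,Z(\widehat G)_u)]$, so $\alpha\in X(\tilde\phi)$ and hence $\omega\in X(\tilde\pi)$.

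With the hypotheses of Lemma \ref{lem: restG} in hand, that lemma yields
\[
\mainconst{\pi}=\abs{X(\tilde\pi)}\,\mainconst{\tilde\pi}.
\]
Lemma \ref{lem: paramside} gives $\abs{\cent_\phi}=\abs{\cent_{\tilde\phi}}\abs{X(\tilde\phi)}$. The main remaining point, which I expect to be the main obstacle, is to identify $X(\tilde\pi)$ with $X(\tilde\phi)$ as subgroups of the character group of $\widetilde G(F)\bs\widetilde G(\A)$. The containment $X(\tilde\phi)\subset X(\tilde\pi)$ follows from \eqref{eq: gentwist}: if $\alpha\cdot\tilde\phi\sim\tilde\phi$ then $\tilde\pi\otimes\omega=\tilde\pi$. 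The reverse inclusion uses the injectivity of $\tilde\pi\mapsto\tilde\phi$ implicit in the Arthur decomposition: if $\tilde\pi\otimes\omega=\tilde\pi$ with $\omega$ trivial on $\widetilde G(F)G(\A)$, then the corresponding $\alpha$ lies in the kernel of $H^1_{\loc}(\Langlands_F,Z(\widehat{\widetilde G})_u)\to H^1_{\loc}(\Langlands_F,Z(\widehat G)_u)$ and $\canArthur_{\alpha\cdot\tilde\phi}=\canArthur_{\tilde\phi}$, which by the canonicity of the decomposition forces $\alpha\cdot\tilde\phi\sim\tilde\phi$.

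Finally, combining these identities,
\[
\mainconst{\pi}=\abs{X(\tilde\pi)}\,\mainconst{\tilde\pi}=\abs{X(\tilde\phi)}\,\mainconst{\tilde\pi},\qquad \abs{\cent_\phi}=\abs{X(\tilde\phi)}\,\abs{\cent_{\tilde\phi}},
\]
so the equality $\mainconst{\tilde\pi}=\abs{\cent_{\tilde\phi}}$ is equivalent to $\mainconst{\pi}=\abs{\cent_\phi}$. Since every $\phi\in\tparams(G)$ arises from some $\tilde\phi\in\tparams(\widetilde G)$ by surjectivity, and conversely every $\tilde\phi\in\tparams(\widetilde G)$ projects to some $\phi\in\tparams(G)$, this equivalence establishes Conjecture \ref{conj: main} for $\widetilde G$ if and only if for $G$.
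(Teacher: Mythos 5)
Your proof is correct and follows essentially the same route as the paper: combining Lemma \ref{lem: restG} with Lemma \ref{lem: paramside}, verifying the orthogonality hypothesis via \eqref{eq: Arthur decomposition} and \eqref{eq: twistcompatible}, and identifying $X(\tilde\pi)$ with $X(\tilde\phi)$ via \eqref{eq: gentwist} and Lemma \ref{lem: characters}. The only ingredient you leave implicit (but which is packaged into Lemma \ref{lem: restG} anyway) is the factorization $L^S(s,\tilde\pi,\Ad)=\Delta_T^S(s)L^S(s,\pi,\Ad)$, which the paper records explicitly.
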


Indeed, it follows from our assumptions that if $\tilde\pi=\pi^{\psi_N}(\tilde\phi)$ then the $\psi_N$-generic constituent $\pi$ of the restriction
of $\tilde\pi$ to $G(\A)$ is $\pi^{\psi_N}(\phi)$. Moreover, it follows from Lemma \ref{lem: characters} and \eqref{eq: gentwist} that $X(\tilde\pi)=X(\tilde\phi)$.
By \eqref{eq: Arthur decomposition} and \eqref{eq: twistcompatible} the space of $\tilde\pi\otimes\omega$ is orthogonal to that of
$\tilde\pi$ unless $\omega\in X(\tilde\pi)$. Finally $L^S(s,\tilde\pi,\Ad)=\Delta_T^S(s)L^S(s,\pi,\Ad)$. The corollary follows.

\subsection{} \label{sec: projG}
Consider now the case where $\bf G=\bf\widetilde G/\bf T$ where $T$ is a central torus in $G$ which is \emph{induced}
i.e., it is the product of restriction of scalars over extensions of $F$ of split tori.
Thus we have a short exact sequence
\[
1\rightarrow\widehat G\rightarrow\widehat{\widetilde G}\rightarrow\widehat T\rightarrow1.
\]
In particular, $\widehat{\widetilde G}^{\der}\subset\widehat G$ and therefore
$\widehat{\widetilde G}=Z(\widehat{\widetilde G})\widehat G$. This also implies that the short exact sequence
\[
1\rightarrow Z(\widehat G)\rightarrow Z(\widehat{\widetilde G})\rightarrow\widehat T\rightarrow1
\]
is exact.

We also have $\Delta_{\widetilde G}^S(s)=\Delta_G^S(s)\Delta_T^S(s)$.
\begin{lemma} \label{lem: injindtorus}
The map
\begin{equation} \label{eq: kerinj}
H^1_{\loc}(\Langlands_F,Z(\widehat G))\rightarrow H^1_{\loc}(\Langlands_F,Z(\widehat{\widetilde G}))
\end{equation}
is injective.
\end{lemma}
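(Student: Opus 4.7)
The plan is to identify both sides with groups of continuous characters via Lemma \ref{lem: characters} and reduce to the surjectivity of $\widetilde G(\A)\to G(\A)$.

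First, Lemma \ref{lem: characters} identifies $H^1_{\loc}(\Langlands_F,Z(\widehat G))$ and $H^1_{\loc}(\Langlands_F,Z(\widehat{\widetilde G}))$ with the groups of continuous characters of $G(F)\bs G(\A)$ and $\widetilde G(F)\bs\widetilde G(\A)$ respectively. Under these identifications, the map \eqref{eq: kerinj} corresponds to pullback of characters along the adelic projection $\widetilde G(\A)\to G(\A)$ induced by the quotient $\widetilde G\to G=\widetilde G/T$. Since pullback of characters along a surjective continuous homomorphism of topological groups is automatically injective, it suffices to show that $\widetilde G(\A)\to G(\A)$ is surjective.

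To establish the adelic surjectivity I would use the long exact Galois cohomology sequence attached to $1\to T\to\widetilde G\to G\to 1$. At every place $v$ it yields
\[
\widetilde G(F_v)\to G(F_v)\to H^1(F_v,T).
\]
Writing $T=\prod_i\Res_{E_i/F}\mathbb{G}_m$ with the $E_i/F$ finite separable, Shapiro's lemma reduces $H^1(F_v,T)$ to a product of factors $H^1(E_{i,w},\mathbb{G}_m)$, which vanish by Hilbert 90. Hence $\widetilde G(F_v)\to G(F_v)$ is surjective at every $v$. At all $v$ outside a fixed finite set the integral version also holds: passing to smooth reductive models over $\OO_v$ and invoking the corresponding vanishing of $H^1(\OO_v,T)$ (Hilbert 90 for the local ring, or equivalently Lang's theorem at the residue field, applied to each induced factor $\Res_{\OO_{E_{i,w}}/\OO_v}\mathbb{G}_m$) one obtains $\widetilde G(\OO_v)\twoheadrightarrow G(\OO_v)$. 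Combining a local lift at each of the finitely many bad places with integral lifts at the remaining ones gives the required adelic surjectivity.

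The main technical point is the last assembly step for the integral structure at unramified places; the rest is formal. As an alternative route I could work directly with the long exact sequence for $1\to Z(\widehat G)\to Z(\widehat{\widetilde G})\to\widehat T\to 1$, and show that the image of the connecting map $\delta:\widehat T^{W_F}\to H^1(W_F,Z(\widehat G))$ is contained in $\ker^1(W_F,Z(\widehat G))$; via the corresponding local sequence this again reduces to the local surjectivity $(Z(\widehat{\widetilde G}))^{W_{F_v}}\twoheadrightarrow\widehat T^{W_{F_v}}$, which by local Langlands duality for tori is the same assertion about $\widetilde G(F_v)\to G(F_v)$. I expect the character-theoretic formulation above to be the most transparent.
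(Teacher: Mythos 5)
Your argument is correct, but it takes a considerably heavier route than the paper's. You pass through Lemma \ref{lem: characters} to translate both $H^1_{\loc}$ groups into character groups and then reduce to the adelic surjectivity of $\widetilde G(\A)\to G(\A)$, which you verify via Hilbert 90 for the induced torus $T$ locally and integrally. The paper instead stays entirely on the dual side and never leaves the category of Galois modules: since $T$ is induced, the norm map $\widehat T\to\widehat T^{\Gamma'}$, $t\mapsto\prod_{\sigma\in\Gamma'}t^\sigma$, is onto for every subgroup $\Gamma'\le\Gamma$; therefore any surjection of $\Gamma'$-modules onto $\widehat T$ stays surjective on $\Gamma'$-invariants, so $Z(\widehat{\widetilde G})^{W_{F_v}}\to\widehat T^{W_{F_v}}$ is onto and the local connecting maps vanish; hence $H^1(W_{F_v},Z(\widehat G))\to H^1(W_{F_v},Z(\widehat{\widetilde G}))$ is injective for every $v$, which forces injectivity on $H^1_{\loc}$ since $H^1_{\loc}$ injects into the product of the local $H^1$'s by construction. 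This is essentially your alternative route, and it is the more economical one: Lemma \ref{lem: characters} is a strictly stronger statement whose own proof already exercises exactly this norm-surjectivity for induced tori, so invoking it here is a detour, and your local Hilbert 90 computation is just the Langlands-dual translation of the same local norm-surjectivity. One small imprecision in the way you phrase the alternative route: showing $\Img(\delta)\subset\ker^1(W_F,Z(\widehat G))$ only controls the \emph{kernel} of $H^1(W_F,Z(\widehat G))\to H^1(W_F,Z(\widehat{\widetilde G}))$, not the preimage of $\ker^1(W_F,Z(\widehat{\widetilde G}))$, which is what injectivity on $H^1_{\loc}$ requires; what one actually needs — and what the local surjectivity delivers — is the vanishing of every local $\delta_v$, which shows directly that a global class whose image is locally trivial upstairs was already locally trivial downstairs.
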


\begin{proof}
By the assumption on $T$, for any subgroup $\Gamma'$ of $\Gamma$ the map $\widehat T\rightarrow\widehat T^{\Gamma'}$
given by $t\mapsto\prod_{\sigma\in\Gamma'}t^\sigma$ is surjective. It follows that for any surjective homomorphism
of $\Gamma'$-modules $S\rightarrow\widehat T$, the map $S^{\Gamma'}\rightarrow\widehat T^{\Gamma'}$ is also surjective.
In particular, $Z(\widehat{\widetilde G})^{\Gamma'}\rightarrow\widehat T^{\Gamma'}$ is onto and we conclude
that the map
\[
H^1(W_{F_v},Z(\widehat G))\rightarrow H^1(W_{F_v},Z(\widehat{\widetilde G}))
\]
is injective for all $v$. This immediately implies the injectivity of \eqref{eq: kerinj}.
\end{proof}

\begin{corollary}
Let $\phi\in\params(G)$ and let $\tilde\phi$ be the corresponding element in $\params(\widetilde G)$.
Then $\cent_{\tilde\phi}=\cent_{\phi}$.
\end{corollary}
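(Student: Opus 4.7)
The plan is to show that $D_{\tilde\phi}/Z(\widehat{\widetilde G})$ and $D_\phi/Z(\widehat G)$ are canonically isomorphic, and that this isomorphism identifies the ``locally trivial cocycle'' conditions defining $\cent_{\tilde\phi}$ and $\cent_\phi$, by invoking the injectivity built into the proof of Lemma \ref{lem: injindtorus}.

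First I would exploit the identity $\widehat{\widetilde G} = Z(\widehat{\widetilde G})\,\widehat G$, which follows from $\widehat{\widetilde G}^{\der}\subset\widehat G$ recorded in the setup of \S\ref{sec: projG}. Writing any $\tilde s\in\widehat{\widetilde G}$ as $zs$ with $z\in Z(\widehat{\widetilde G})$ and $s\in\widehat G$, a short commutator calculation in $\widehat{\widetilde G}\rtimes W_F$ gives
\[
[\tilde s,\tilde\phi(x)] \;=\; z\cdot\bigl(w(x)\cdot z^{-1}\bigr)\cdot [s,\phi(x)],
\]
where $w(x)\in W_F$ is the image of $\phi(x)$ and the first two factors lie in $Z(\widehat{\widetilde G})$. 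Since $[s,\phi(x)]\in\widehat G$, the condition $[\tilde s,\tilde\phi(x)]\in Z(\widehat{\widetilde G})$ is equivalent to $[s,\phi(x)]\in Z(\widehat{\widetilde G})\cap\widehat G = Z(\widehat G)$. Hence $D_{\tilde\phi} = Z(\widehat{\widetilde G})\,D_\phi$, and since $D_\phi\cap Z(\widehat{\widetilde G}) = Z(\widehat G)$, we obtain a canonical isomorphism $D_{\tilde\phi}/Z(\widehat{\widetilde G})\cong D_\phi/Z(\widehat G)$.

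Next I would track the coboundary-defining map. Under this isomorphism, $\bar s\in D_\phi/Z(\widehat G)$ maps to the class of $x\mapsto [s,\phi(x)]$ in $H^1(\Langlands_F,Z(\widehat G))$, while $\overline{zs}$ maps to the class of $x\mapsto z\bigl(w(x)\cdot z^{-1}\bigr)[s,\phi(x)]$ in $H^1(\Langlands_F,Z(\widehat{\widetilde G}))$. The first two factors form the coboundary of $z$, so these two classes agree after applying the natural map $H^1(\Langlands_F,Z(\widehat G))\to H^1(\Langlands_F,Z(\widehat{\widetilde G}))$. Finally, the proof of Lemma \ref{lem: injindtorus} establishes the local injectivity $H^1(W_{F_v},Z(\widehat G))\hookrightarrow H^1(W_{F_v},Z(\widehat{\widetilde G}))$ at every place $v$, so a class in $H^1(\Langlands_F,Z(\widehat G))$ is locally trivial if and only if its image in $H^1(\Langlands_F,Z(\widehat{\widetilde G}))$ is locally trivial. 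Therefore the preimages of $\ker^1(\Langlands_F,Z(\widehat G))$ and $\ker^1(\Langlands_F,Z(\widehat{\widetilde G}))$ coincide under the isomorphism above, yielding $\cent_{\tilde\phi}=\cent_\phi$.

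The only subtlety I anticipate lies in the first step: carefully verifying the commutator identity in the semidirect product $\widehat{\widetilde G}\rtimes W_F$ requires tracking how the Galois action on the center $Z(\widehat{\widetilde G})$ interacts with the decomposition $\tilde s = zs$. This is formal but must be done cleanly; once established, the remaining steps are immediate applications of facts already in hand.
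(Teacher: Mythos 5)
Your proof is correct and follows essentially the same route as the paper: both arguments rest on the decomposition $\widehat{\widetilde G}=Z(\widehat{\widetilde G})\widehat G$, the identity $Z(\widehat G)=\widehat G\cap Z(\widehat{\widetilde G})$, and the local injectivity established in the proof of Lemma \ref{lem: injindtorus}. The paper is terser --- rather than verifying the commutator identity and matching cocycle classes up to coboundary, it simply uses that $S_{\tilde\phi}$ contains $Z(\widehat{\widetilde G})$ by definition to normalize a representative $s\in\widehat G$, then invokes the injectivity of \eqref{eq: kerinj} directly --- but your more explicit treatment of the semidirect-product commutator and the coboundary $x\mapsto z\,({}^{w(x)}z^{-1})$ makes the same mechanism transparent without changing the underlying argument.
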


\begin{proof}
Since $Z(\widehat G)=\widehat G\cap Z(\widehat{\widetilde G})$ we need to show that $S_{\tilde\phi}=Z(\widehat{\widetilde G})S_\phi$.
Suppose that $s\in S_{\tilde\phi}$. By changing $s$ by an element of $Z(\widehat{\widetilde G})$ we may assume that $s\in\widehat G$.
By assumption the $1$-cocycle $x\mapsto[s,\phi(x)]$ is locally trivial as an element of
$H^1(\Langlands_F,Z(\widehat{\widetilde G}))$. On the other hand $x\mapsto[s,\phi(x)]$ takes values
in $Z(\widehat{\widetilde G})\cap\widehat G=Z(\widehat G)$. It follows from the injectivity of \eqref{eq: kerinj}
that $s\in S_\phi$ as required.
\end{proof}

We will assume of course that if $\phi$ and $\tilde\phi$ are as above then $\canArthur_{\tilde\phi}$
consists of the pullback of functions in $\canArthur_\phi$ via $\widetilde G(\A)\rightarrow G(\A)$.
In particular, $\tilde\pi:=\pi^{\psi_N}(\tilde\phi)$ is the pullback via $\widetilde G(\A)\rightarrow G(\A)$ of $\tilde\pi:=\pi^{\psi_N}(\phi)$.
since $\widetilde G(\A)\rightarrow G(\A)$ is surjective by our assumption on $T$, the pull-back to $\widetilde G(\A)$ preserves the inner product.
Note that $L^S(s,\tilde\pi,\Ad)=\Delta_T^S(s)L^S(s,\pi,\Ad)$. Therefore $\mainconst{\tilde\pi}=\mainconst{\pi}$.

\begin{corollary} \label{cor: quotient}
Suppose that Conjecture \ref{conj: main} holds for $\bf\widetilde G$. Then it holds for $\bf G$.
\end{corollary}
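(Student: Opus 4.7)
The plan is to read off the corollary by combining the equalities already compiled in the preceding discussion. Three ingredients are needed: (i) $\cent_{\tilde\phi} = \cent_\phi$; (ii) $\tilde\pi := \pi^{\psi_N}(\tilde\phi)$ is identified with the pullback of $\pi := \pi^{\psi_N}(\phi)$ along $\widetilde G(\A)\twoheadrightarrow G(\A)$; (iii) $\mainconst{\tilde\pi} = \mainconst{\pi}$. Granting these, assuming Conjecture \ref{conj: main} for $\widetilde{\bf G}$ gives $\mainconst{\tilde\pi}=\abs{\cent_{\tilde\phi}}$, and (i), (iii) translate this to $\mainconst{\pi}=\abs{\cent_\phi}$, which is the conjecture for $\bf G$.

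Ingredient (i) is the previous corollary. Ingredient (ii) is built into the setup: we are assuming $\canArthur_{\tilde\phi}$ consists of pullbacks from $\canArthur_\phi$, and surjectivity of $\widetilde G(\A)\to G(\A)$ follows from the induced-torus hypothesis since $H^1(F_v,T)=1$ at every place (Shapiro plus Hilbert~90). The pullback is an isometric embedding once we fix Haar measures compatibly with the exact sequence $1\to T(\A)\to\widetilde G(\A)\to G(\A)\to 1$ and note that $N$ sits identically inside both groups, so $\whit^{\psi_N}$ and the Petersson pairing (divided by total volume) agree on $\tilde\pi$ and $\pi$.

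For ingredient (iii) we compare the two instances of the defining identity \eqref{def: cpi}. The local integrals $(\varphi,\varphi^\vee)_{\pi_S}^{\psi_N}$ coincide with their counterparts on $\tilde\pi_S$, since the $N(F_v)$-integrals are insensitive to the central quotient. On the $L$-function side we have $\Delta_{\widetilde G}^S(s)=\Delta_G^S(s)\Delta_T^S(s)$ and, because the adjoint representation of $^L\widetilde G$ on $\Lie\widehat{\widetilde G}$ decomposes as $\Lie\widehat G$ plus the Galois module $X^*(\mathbf T)\otimes\C$, we have $L^S(s,\tilde\pi,\Ad)=\Delta_T^S(s)L^S(s,\pi,\Ad)$. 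The common factor $\Delta_T^S(s)$ cancels in the ratio $\Delta^S(s)/L^S(s,\cdot,\Ad)\big|_{s=1}$, yielding $\mainconst{\tilde\pi}=\mainconst{\pi}$. The main bookkeeping obstacle will be ingredient (ii): matching the normalizations so that the volumes $\vol(\widetilde G(F)\bs\widetilde G(\A)^1)$ and $\vol(G(F)\bs G(\A)^1)$ factor correctly through $\vol(T(F)\bs T(\A)^1)$, which in turn is what is needed for the $\Delta_T^S(1)$-cancellation above to translate into equality of the constants rather than equality up to a spurious volume factor.
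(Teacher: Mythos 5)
Your proposal is correct and follows essentially the same route as the paper: \S\ref{sec: projG} establishes precisely your three ingredients (the corollary $\cent_{\tilde\phi}=\cent_\phi$ via Lemma \ref{lem: injindtorus}, the assumption that $\canArthur_{\tilde\phi}$ is the pullback of $\canArthur_\phi$ together with surjectivity of $\widetilde G(\A)\rightarrow G(\A)$, and $\mainconst{\tilde\pi}=\mainconst{\pi}$ from $\Delta_{\widetilde G}^S=\Delta_G^S\Delta_T^S$ and $L^S(s,\tilde\pi,\Ad)=\Delta_T^S(s)L^S(s,\pi,\Ad)$), and the corollary is exactly their combination. Your extra care with the volume/measure bookkeeping and the decomposition of the adjoint representation only fills in details the paper leaves implicit.
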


By taking a $z$-extension (cf.~\cite{MR540901, MR683003}) we infer from Corollaries \ref{cor: subgroup} and \ref{cor: quotient}:

\begin{corollary}
Suppose that Conjecture \ref{conj: main} holds for quasi-split semisimple simply connected groups.
Then it holds for all quasi-split reductive groups.
\end{corollary}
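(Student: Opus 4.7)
The plan is simply to combine the two preceding corollaries via a standard $z$-extension. Given a quasi-split reductive group $\bf G$ over $F$, recall (following Kottwitz \cite{MR683003}, cf.~\cite{MR540901}) that there exists a short exact sequence
\[
1\rightarrow {\bf Z}\rightarrow{\bf\widetilde G}\rightarrow{\bf G}\rightarrow 1
\]
of connected reductive groups over $F$ with $\bf Z$ a central induced torus of $\bf\widetilde G$ and $\bf\widetilde G^{\der}$ semisimple and simply connected. Moreover, $\bf\widetilde G$ inherits quasi-splitness from $\bf G$ (any Borel in $\bf G$ lifts to one in $\bf\widetilde G$), and of course $\bf\widetilde G^{\der}$ is then quasi-split as well.

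First, I would apply Corollary \ref{cor: subgroup} to the pair $\bf\widetilde G^{\der}\subset\bf\widetilde G$: here the ambient group in the role of $\bf\widetilde G$ of \S\ref{sec: restder} is our $\bf\widetilde G$, and the subgroup in the role of $\bf G$ of \S\ref{sec: restder} is $\bf\widetilde G^{\der}$, which trivially contains its own derived group. Thus Conjecture \ref{conj: main} holds for $\bf\widetilde G$ if and only if it holds for $\bf\widetilde G^{\der}$. Since $\bf\widetilde G^{\der}$ is quasi-split semisimple simply connected, the hypothesis of the corollary gives the conjecture for $\bf\widetilde G^{\der}$, and hence for $\bf\widetilde G$.

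Next, I would apply Corollary \ref{cor: quotient} to the projection $\bf\widetilde G\rightarrow\bf G=\bf\widetilde G/\bf Z$ with central induced torus $\bf Z$. Conjecture \ref{conj: main} for $\bf\widetilde G$ then implies Conjecture \ref{conj: main} for $\bf G$, completing the proof.

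There is no real obstacle beyond verifying that the $z$-extension respects all the compatibility assumptions tacitly built into \S\ref{sec: restder} and \S\ref{sec: projG} (i.e., the compatibility between the spaces $\canArthur_\phi$ under restriction of functions and under pullback, and the surjectivity of the maps $\params(\widetilde G)\rightarrow\params(G)$ in each case); these hold because $\bf\widetilde G^{\der}$ is simply connected and $\bf Z$ is induced, so that the relevant Galois cohomology groups behave as needed (cf.~the proofs of Lemmas \ref{lem: restG} and \ref{lem: injindtorus}).
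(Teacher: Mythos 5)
Your proposal is correct and is exactly the argument the paper intends: it fleshes out the one-line remark "by taking a $z$-extension we infer from Corollaries \ref{cor: subgroup} and \ref{cor: quotient}" by applying Corollary \ref{cor: subgroup} to $\bf\widetilde G^{\der}\subset\bf\widetilde G$ and then Corollary \ref{cor: quotient} to $\bf G=\bf\widetilde G/\bf Z$. The additional remarks on quasi-splitness of $\bf\widetilde G$ and on the tacit compatibility assumptions are appropriate and consistent with the paper.
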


Note that the reasoning is analogous to the argument reducing the computation of the Tamagawa number of a reductive group
to the semisimple simply connected case, i.e. to Weil's conjecture (cf.~\cite{MR631309}, \cite[\S5]{MR757954}).


\section{The $\GL_m$ case} \label{sec: GLn}
Let ${\bf G}={\bf G}_m$ be the group $\GL_m$ over a number field $F$.
Let $ N=N_m$ be the subgroup of upper unitriangular matrices in $G$.
Fix a non-degenerate character $\psi_N$ of $N(\A)$, trivial on $N(F)$.

\begin{theorem} \label{thm: GLn}
We have $\mainconst{\pi}=1$ for any cuspidal irreducible automorphic representation $\pi$ of $\GL_m$.
\end{theorem}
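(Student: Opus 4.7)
The plan is to invoke the Rankin--Selberg theory of Jacquet, Piatetski-Shapiro and Shalika for $\GL_m\times\GL_m$, as pointed to in the introduction. Consider the global zeta integral
\[
Z(s,\varphi,\varphi^\vee,\Phi)=\int_{Z(\A)\GL_m(F)\bs\GL_m(\A)}\varphi(g)\varphi^\vee(g)E(g,\Phi,s)\,dg,
\]
where $E(g,\Phi,s)$ is the standard Eisenstein series attached to a Schwartz--Bruhat function $\Phi$ on $\A^m$. The residue at $s=1$ is a universal multiple of $\Phi(0)\cdot\int_{\GL_m(F)\bs\GL_m(\A)^1}\varphi\,\varphi^\vee\,dg$, with the constant coming from $\res_{s=1}\zeta_F^S(s)$ together with Tamagawa normalizations. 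Unfolding yields
\[
Z(s,\varphi,\varphi^\vee,\Phi)=\int_{N(\A)\bs\GL_m(\A)}\whit^{\psi_N}(g,\varphi)\whit^{\psi_N^{-1}}(g,\varphi^\vee)\Phi(e_mg)|\det g|^s\,dg,
\]
which for factorizable data factors as a product of local integrals $\Psi_v(s,W_v,W_v^\vee,\Phi_v)$; the unramified computation gives $\Psi_v=L(s,\pi_v\times\pi_v^\vee)$. Since $L(s,\pi,\Ad)=L(s,\pi\times\pi^\vee)$ for $\GL_m$ in the paper's convention, taking residues at $s=1$ produces an identity of the shape $\res_{s=1}L^S(s,\pi,\Ad)\cdot\prod_{v\in S}\Psi_v(1,W_v,W_v^\vee,\Phi_v)=c_0\,\Phi(0)\,(\varphi,\varphi^\vee)_{\GL_m(F)\bs\GL_m(\A)^1}$ for an explicit global constant $c_0$.

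Next I would match the local Rankin--Selberg factor $\Psi_v(1,W_v,W_v^\vee,\Phi_v)$ against the local pairing $I_v(\varphi_v,\varphi_v^\vee)=(\varphi_v,\varphi_v^\vee)_{\pi_v}^{\psi_N}$ of Section~\ref{sec: FCMC}. By local uniqueness of the $(N,\psi_N)\times(N,\psi_N^{-1})$-equivariant pairing, both expressions compute the same abstract bilinear form on the Whittaker models, up to constants involving $L(1,\pi_v\times\pi_v^\vee)$ and a factor depending on $\Phi_v$; the exact proportionality is pinned down by the unramified calculation in Proposition~\ref{prop: unram}. Since $\Phi_v$ at $v\in S$ is free and enters only through $\Phi(0)$ on one side and $\Psi_v(1,\cdots)$ on the other, one can convert the global residue identity into a formula for $\whit^{\psi_N}(\varphi)\whit^{\psi_N^{-1}}(\varphi^\vee)$ involving $\res_{s=1}L^S(s,\pi,\Ad)$ and $\prod_{v\in S}I_v(\varphi_v,\varphi_v^\vee)$. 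As $\Delta_{\GL_m}^S(s)=\zeta_F^S(s)\prod_{i=2}^m\zeta_F^S(s+i-1)$ has its simple pole at $s=1$ supplied by $\zeta_F^S(s)$, matching the simple pole of $L^S(s,\pi\times\pi^\vee)$, the ratio $\Delta_G^S(s)/L^S(s,\pi,\Ad)$ is finite at $s=1$ and comparing with \eqref{def: cpi} yields $\mainconst{\pi}=1$, provided all constants balance.

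The main obstacle is precisely this last balancing: tracking the normalizations so the constants cancel exactly. These include the Tamagawa Haar measure on $N(\A)$ (normalized so that $\vol(N(F)\bs N(\A))=1$), the Haar measure on $\GL_m(\A)^1$, the local gauges implicit in Proposition~\ref{prop: unram}, the precise leading coefficient of $E(g,\Phi,s)$ at $s=1$, and the interaction of the local factors with the global $L$-function. A secondary but genuine technicality is the archimedean case, where $I_v$ is defined via analytic continuation of a Jacquet integral (Section~\ref{sec: arch}) rather than by absolute convergence; the local matching with $\Psi_v(1,W_v,W_v^\vee,\Phi_v)$ at such places requires invoking the continuity and uniqueness results of Wallach~\cite{MR1170566} to see that the two normalizations agree.
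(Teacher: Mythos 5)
Your global step is essentially the paper's starting point: the Rankin--Selberg inner product formula expressing $(\varphi,\varphi^\vee)_{G(F)\bs G(\A)^1}$ as $\lim_{s\to1}\frac{L^S(s,\pi\times\pi^\vee)}{\Delta_G^S(s)}$ times a product over $v\in S$ of local pairings $[W_v,W_v^\vee]=\int_{N\bs\mira}W_vW_v^\vee$, together with $\vol(G(F)\bs G(\A)^1)=1$ for Tamagawa measure. But the step where you ``match'' the local Rankin--Selberg pairing against $I_v(\varphi_v,\varphi_v^\vee)=(\varphi_v,\varphi_v^\vee)_{\pi_v}^{\psi_N}$ ``by local uniqueness \dots up to constants'', with the proportionality ``pinned down by the unramified calculation in Proposition~\ref{prop: unram}'', is a genuine gap, not a bookkeeping issue. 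Uniqueness of Whittaker functionals (or of invariant pairings) only gives that the two bilinear forms are proportional with a constant that may depend on $\pi_v$; that is precisely how $\mainconst{\pi}$ was defined in \eqref{def: cpi} in the first place, so invoking it again is circular. And Proposition~\ref{prop: unram} determines the constant only at unramified places with unramified data; it says nothing at the ramified and archimedean places $v\in S$, where the test vectors are arbitrary and where the entire content of the theorem sits. Your closing paragraph defers exactly this to ``balancing normalizations,'' but no amount of measure-tracking will produce the value of these local constants.

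What is missing is the local identity that carries all the weight in the paper's proof (Lemma~\ref{lem: whitrltnGLn}): for the invariant pairing $[W,W^\vee]=\int_{N\bs\mira}W(p)W^\vee(p)\,dp$ on the local Whittaker models, the regularized Fourier coefficient satisfies $[W,W^\vee]^{\psi_N}=W(e)W^\vee(e)$ exactly, at every place. This is proved by a genuinely local argument: a local analogue of the Rankin--Selberg unfolding along the chain of mirabolic subgroups, using Fourier inversion on $F^j$ and the transitivity of the $\GL_j$-action on $F^j\setminus\{0\}$, after first reducing (via \eqref{eq: redtosqrint}, i.e.\ Proposition~\ref{prop: jacquetdescent}, and the compatibility of the two pairings with parabolic induction from \cite[Appendix A]{MR2930996}) to the square-integrable case, which is also how the archimedean places are handled -- your appeal to continuity and uniqueness from \cite{MR1170566} alone does not yield the constant there either. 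Once this local lemma is available, the comparison with \eqref{def: cpi} immediately gives $\mainconst{\pi}=1$; without it, your argument only re-establishes that some constant $\mainconst{\pi}$ exists and is independent of $S$.
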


We will prove the theorem below. (See \cite[\S18]{1203.0039} for a similar result.)
The theorem essentially says that Conjecture \ref{conj: main} holds for $\GL_m$ (assuming the existence of Langlands's group)
since in this case $\cent_\phi=1$ for any parameter $\phi$.
Of course we will prove the theorem without assuming Arthur's conjectures.
From the discussion of \S \ref{sec: restder} and \ref{sec: projG} we get the following analogous results for the groups $\PGL_m$ and $\SL_m$.

\begin{corollary} \label{cor: PGLm}
We have $\mainconst{\pi}=1$ for any cuspidal irreducible representation $\pi$ of $\PGL_m$.
\end{corollary}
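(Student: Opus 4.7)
The plan is to reduce directly to Theorem \ref{thm: GLn} by pulling back along the central projection $\GL_m\twoheadrightarrow\PGL_m$. This realizes $\PGL_m$ as $\widetilde G/T$ in the setup of \S\ref{sec: projG}, with $\widetilde{\bf G}=\GL_m$ and $\bf T=\GL_1$ the center, which is a split torus and in particular induced. Given an irreducible cuspidal automorphic representation $\pi$ of $\PGL_m(\A)$, composing with $\GL_m(\A)\twoheadrightarrow\PGL_m(\A)$ yields an irreducible cuspidal automorphic representation $\tilde\pi$ of $\GL_m(\A)$ with trivial central character, to which Theorem \ref{thm: GLn} applies.

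What must be checked is that pullback preserves $\mainconst{\cdot}$. This is precisely what is done at the end of \S\ref{sec: projG} and amounts to three compatibilities. First, the maximal unipotent $\bf N$ and the character $\psi_N$ are unchanged, so both the global Whittaker coefficients $\whit^{\psi_N}$ and the local pairings $(\cdot,\cdot)_{\pi_S}^{\psi_N}$ are literally identical for $\pi$ and $\tilde\pi$ under the identification of smooth vectors. Second, since $T$ is split and $\PGL_m$ is semisimple, the map $\widetilde G(\A)^1\twoheadrightarrow G(\A)^1=G(\A)$ is surjective, and the pullback rescales the inner product by the ratio of volumes $\vol(\widetilde G(F)\bs\widetilde G(\A)^1)/\vol(G(F)\bs G(\A)^1)$. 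Third, the $L$-function identity $L^S(s,\tilde\pi,\Ad)=\zeta_F^S(s)L^S(s,\pi,\Ad)$ matches the decomposition $\Delta_{\widetilde G}^S(s)=\Delta_T^S(s)\Delta_G^S(s)=\zeta_F^S(s)\Delta_G^S(s)$, so the ratio $\Delta_\bullet^S(s)/L^S(s,\cdot,\Ad)\big|_{s=1}$ appearing in the defining relation \eqref{def: cpi} is unchanged. Putting these together gives $\mainconst{\pi}=\mainconst{\tilde\pi}$, and Theorem \ref{thm: GLn} then yields $\mainconst{\pi}=1$.

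No step presents a serious obstacle: the argument is entirely a bookkeeping exercise for the pullback, and in particular is unconditional on Arthur's conjectures, since we work with the concrete cuspidal representation $\pi$ rather than routing through a hypothetical parameter $\phi$. The only care needed is to ensure that the Tamagawa normalizations on the two sides are chosen compatibly with the measure on $T(\A)=\A^\times$, as made explicit in \S\ref{sec: projG}; with that in hand the three compatibilities above fit together without any remaining scalar ambiguity.
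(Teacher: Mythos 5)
Your argument is correct and follows the paper's own route exactly: the corollary is derived by combining Theorem \ref{thm: GLn} with the pullback analysis of \S\ref{sec: projG}, applied to $\PGL_m=\GL_m/\GL_1$ with $\GL_1$ the (split, hence induced) center. Your remark that this particular instance of the \S\ref{sec: projG} comparison is unconditional -- since one works with the concrete cuspidal pullback $\tilde\pi$ rather than with a hypothetical Arthur packet -- is precisely the point that makes the corollary unconditional even though Corollary \ref{cor: quotient} itself is framed under Arthur's conjectures.
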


\begin{corollary} \label{cor: SLm}
Suppose that $\tilde\pi$ is a cuspidal irreducible automorphic representation of $\GL_m$ realized on $V_{\tilde\pi}$.
Let $\pi$ be the unique irreducible constituent of $\SL_m(\A)$ on
\[
V'_{\tilde\pi}=\{\varphi\rest_{\SL_m(\A)}:\varphi\in V_{\tilde\pi}\}
\]
on which $\whit^{\psi_N}$ is non-zero.
Then we have $\mainconst{\pi}=\abs{X(\tilde\pi)}$
where $X(\tilde\pi)$ is the group of Hecke characters $\omega$ such that $\tilde\pi\otimes\omega=\tilde\pi$.
\end{corollary}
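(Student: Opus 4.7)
The plan is to deduce Corollary \ref{cor: SLm} directly from Lemma \ref{lem: restG} applied to the pair $\bf\widetilde G=\GL_m$, $\bf G=\SL_m$, combined with the $\GL_m$ case of Theorem \ref{thm: GLn}. The quotient torus is $\bf T=\GL_m/\SL_m\cong\GL_1$ via the determinant, so characters of $\widetilde G(\A)$ trivial on $\widetilde G(F)G(\A)=\GL_m(F)\SL_m(\A)$ factor through $\det$ and are in bijection with Hecke characters of $F^\ast\bs\A^\ast$; under this bijection the group $X(\tilde\pi)$ of Lemma \ref{lem: restG} is identified with the group of Hecke characters $\omega$ satisfying $\tilde\pi\otimes\omega=\tilde\pi$, which is exactly the group appearing in the statement.

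First I would verify the hypotheses of Lemma \ref{lem: restG}. The key assumption there is that the space of $\tilde\pi\otimes\omega$ is orthogonal to that of $\tilde\pi$ for $\omega\notin X(\tilde\pi)$; as noted in the lemma this holds automatically when the cuspidal multiplicity of $\tilde\pi$ is one. For $\GL_m$ this is the classical strong multiplicity one theorem of Piatetski-Shapiro and Shalika, so the hypothesis is free. The genericity of $\tilde\pi$ is also automatic for cuspidal representations of $\GL_m$.

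Next, Lemma \ref{lem: restG} produces a unique $\psi_N$-generic irreducible constituent $\pi$ of the restriction $V'_{\tilde\pi}=\{\varphi\rest_{\SL_m(\A)}:\varphi\in V_{\tilde\pi}\}$, which is exactly the $\pi$ in the statement of the corollary; moreover the lemma gives the identity
\[
\mainconst{\pi}=\abs{X(\tilde\pi)}\,\mainconst{\tilde\pi}.
\]
Applying Theorem \ref{thm: GLn} to $\tilde\pi$ yields $\mainconst{\tilde\pi}=1$, and the corollary follows at once.

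There is essentially no obstacle here beyond bookkeeping: the substantive content (the orthogonality coming from multiplicity one on $\GL_m$, the computation $\mainconst{\tilde\pi}=1$ for $\GL_m$, and the identity $\mainconst{\pi}=\abs{X(\tilde\pi)}\mainconst{\tilde\pi}$) is already in place in the earlier sections. The only small verification worth flagging is matching the two possible meanings of $X(\tilde\pi)$: the one used in Lemma \ref{lem: restG} (characters of $\widetilde G(\A)$ trivial on $\widetilde G(F)G(\A)$ fixing $\tilde\pi$) and the one in the statement (Hecke characters $\omega$ with $\tilde\pi\otimes\omega=\tilde\pi$). This identification is an immediate consequence of the fact that the quotient $\widetilde G(\A)/\widetilde G(F)G(\A)$ is $F^\ast\bs\A^\ast$ via $\det$.
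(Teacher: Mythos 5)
Your argument is exactly the paper's intended proof: Corollary \ref{cor: SLm} is deduced by applying Lemma \ref{lem: restG} with $\widetilde G=\GL_m$, $G=\SL_m$ (the orthogonality hypothesis being free from multiplicity one for $\GL_m$, and the characters trivial on $\GL_m(F)\SL_m(\A)$ being identified with Hecke characters via $\det$), together with $\mainconst{\tilde\pi}=1$ from Theorem \ref{thm: GLn}. Everything checks out, and your flagged identification of the two descriptions of $X(\tilde\pi)$ is the only point needing the brief verification you gave.
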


We remark that every irreducible constituent of $V'_{\tilde\pi}$ is generic with respect to \emph{some}
non-degenerate character of $N(\A)$.
We also remark that in the case $G'=\SL_2$ we have multiplicity one for $G'$ (\cite{MR1792292}),
but this is not true for $m>2$ \cite{MR1303497} (hence, it is necessary to specify the automorphic realization
of $\pi$).



Let us now prove Theorem \ref{thm: GLn}.
Since we are free to choose $\psi_N$, we
fix a non-trivial continuous character $\psi=\otimes_v\psi_v$ of $\A$ which is trivial on $F$
and take $\psi_N$ to be the character $n\mapsto\psi(n_{1,2}+\dots+n_{m-1,m})$ of $N(\A)$ (trivial on $N(F)$).

We are also free to choose the Haar measure on $G_m(\A)$. It will be convenient to take the Tamagawa measure (for any $m$). \label{sec: Tamagawa}
First, we take the self-dual Haar measure on $\A$ with respect to $\psi$.
This measure does not depend on the choice of $\psi$: it is the Tamagawa measure and satisfies $\vol(F\bs\A)=1$.
On $F_v$ we take the self-dual measure with respect to $\psi_v$.
Thus, the measure on $\A$ is the product measure of the $F_v$'s.
We also take the product measure on any $F_v^k$.
Consider the top degree invariant differential form (gauge form) $\wedge_{i,j=1,\dots,m}dg_{i,j}/\det g^m$ on $G_m$.\footnote{We can ignore the ambiguity of the sign.}
By a standard construction (cf.~\cite{MR0217077}) this gauge form,
together with our choice of measure on $F_v$, give rise to a Haar measure $dg_v$ on $G(F_v)$ for all $v$.
If $v$ is $p$-adic and $\psi_v$ is unramified then the measure of $K_v=G_m(\OO_v)$ is $\Delta_{G_m,v}(1)^{-1}=\prod_{j=1}^m\zeta_{F_v}(j)^{-1}$.
The measure on $G_m(\A)$ is then defined to be
\[
(\res_{s=1}\Delta_{G_m}^S(s))^{-1}\cdot \prod_{v\in S}dg_v\prod_{v\notin S}\Delta_{G_m,v}(1)\,dg_v
\]
where $S$ is any finite set of places containing the archimedean ones.
The definition is independent of $S$.

Recall that $G(\A)^1=\{g\in G(\A):\abs{\det g}_{\A^*}=1\}$.
Let $A_G$ denote the central subgroup of $G(\A)$ consisting of the scalar matrices $\lambda I_m$ with $\lambda\in\R_{>0}$
where we embed $\R$ in $\A$ (and therefore $\R_{>0}$ in $\A^*$) via
$\R\hookrightarrow\A_{\Q}\hookrightarrow\A=\A_{\Q}\otimes_{\Q}F$ (where the second embedding is $x\mapsto x\otimes 1$).
We have $G(\A)=G(\A)^1\times A_G$.
We endow $A_G$ with the pull-back of the standard Haar measure
$\frac{dx}x$ on $\R_{>0}$ (where $dx$ is the standard Lebesgue measure) under
the isomorphism $\abs{\det}_{\A^*}:A_G\rightarrow\R_{>0}$.
Together with the Haar measure on $G(\A)$, this defines a Haar measure on $G(\A)^1$.
It is well known that $\vol(G(F)\bs G(\A)^1)=1$.



Let $\mira=\mira_m$ be the mirabolic subgroup of $G$ consisting of
matrices whose last row is $\xi_m=(0,\dots,0,1)\in F^m$.
We have $\mira\simeq\GL_{m-1}\ltimes F^{m-1}$.
We use this relation to endow $\mira(F_v)$ (and more generally $\mira_j(F_v)$) with local Tamagawa measures.
We have the following integration formula
\begin{equation} \label{eq: int form mira}
\int_{\mira_j\bs\GL_j}\phi(\xi_jg)\abs{\det g}\ dg=\int_{F^j}\phi(\eta)\ d\eta=\hat\phi(0)
\end{equation}
for any $\phi\in C_c(F^j)$.

Let $\pi\in\Cusp G$. By the theory of Rankin--Selberg integrals, for any $\varphi$ in the space of $\pi$
and $\varphi^\vee$ in the space of $\pi^\vee$ we have
\[
(\varphi,\varphi^\vee)_{G(F)\bs G(\A)^1}=\lim_{s\rightarrow 1}\frac{L^S(s,\pi\otimes\pi^\vee)}
{\Delta_G^S(s)}[\whit^{\psi_N}(\cdot,\varphi),\whit^{\psi_N^{-1}}(\cdot,\varphi^\vee)]_S
\]
where $S$ is a sufficiently large finite set of places and
\[
[W,W^\vee]_S=\int_{N_m(F_S)\bs\mira_m(F_S)}W(p)W^\vee(p)\ dp
\]
(cf.~\cite{MR2309989}).
The factor $\Delta_G^S(s)$ shows up because of our choice of measures on $G(\A)$ and $G(F_S)$.
We recall that $[\cdot,\cdot]_S$ defines an invariant pairing.
We also recall that $L^S(s,\pi\otimes\pi^\vee)=L^S(s,\pi,\Ad)$.
Theorem \ref{thm: GLn} therefore reduces to the following local identity.

\begin{lemma} \label{lem: whitrltnGLn}
Let $F$ be a local field of characteristic $0$ and let $\pi$ be an irreducible unitary generic representation
of $G=G(F)$, realized on its Whittaker model $\Whit^{\psi_N}(\pi)$.
Let $[\cdot,\cdot]$ be the pairing
\begin{equation} \label{eq: innerGL_m}
[W,W^\vee]=\int_{N\bs\mira}W(p)W^\vee(p)\ dp,\ \ W\in\Whit^{\psi_N}(\pi),\ W^\vee\in\Whit^{\psi_N^{-1}}(\pi^\vee).
\end{equation}
Then for any $W\in\Whit^{\psi_N}(\pi)$ and $W^\vee\in\Whit^{\psi_N^{-1}}(\pi^\vee)$ we have
\begin{equation} \label{eq: locidentwhit}
[W,W^\vee]^{\psi_N}=W(e)W^\vee(e).
\end{equation}
(For the meaning of the left-hand side cf.~Remark \ref{rem: anydual}.)
\end{lemma}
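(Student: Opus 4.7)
The plan is to reduce the claimed identity to a single scalar equation $c(\pi)=1$ using Proposition \ref{prop: stablemc}, and then verify it via a combination of the unramified Casselman--Shalika computation, the Rankin--Selberg unfolding, and reduction to essentially square-integrable inducing data.

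First I would apply Proposition \ref{prop: stablemc} (together with its archimedean analog of \S\ref{sec: arch}), taking the pairing $[\cdot,\cdot]$ from the statement to play the role of the canonical pairing between $\pi$ and $\pi^\vee$ as permitted by Remark \ref{rem: anydual}. The associated stable integral $[\cdot,\cdot]^{\psi_N}$ is $(N,\psi_N)$-equivariant on the left and $(N,\psi_N^{-1})$-equivariant on the right, and therefore descends to a nondegenerate bilinear form on the one-dimensional twisted Jacquet modules $\JF_{\psi_N}(\pi)\times\JF_{\psi_N^{-1}}(\pi^\vee)$. Under the Whittaker realization these are canonically spanned by the evaluation-at-identity functionals, so
\[
[W,W^\vee]^{\psi_N}=c(\pi)\,W(e)\,W^\vee(e)
\]
for a scalar $c(\pi)$ depending only on the isomorphism class of $\pi$. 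The task reduces to showing $c(\pi)=1$.

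For unramified $\pi$ with normalized Whittaker functions $W_0,W_0^\vee$ satisfying $W_0(e)=W_0^\vee(e)=1$, Proposition \ref{prop: unram} yields
\[
[W_0,W_0^\vee]^{\psi_N}=\vol(N\cap K)\,[W_0,W_0^\vee]\,\frac{\Delta_G(1)}{L(1,\pi,\Ad)},
\]
while the Rankin--Selberg unfolding for $\GL_m\times\GL_m$ (cf.~\cite{MR1816039, MR2309989}) evaluates $[W_0,W_0^\vee]$ in closed form; in the measure normalizations at hand, the result is $\vol(N\cap K)^{-1}L(1,\pi,\Ad)/\Delta_G(1)$, using the identity $L(s,\pi\otimes\pi^\vee)=L(s,\pi,\Ad)$ for $\GL_m$. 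Multiplying yields $[W_0,W_0^\vee]^{\psi_N}=1=W_0(e)W_0^\vee(e)$, hence $c(\pi)=1$ in the unramified case.

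To extend to a general irreducible unitary generic $\pi$, realize $\pi$ as the (irreducible) parabolic induction from $\sigma=\sigma_1\otimes\cdots\otimes\sigma_k$ with each $\sigma_i$ essentially square-integrable on $\GL_{m_i}$. Proposition \ref{prop: jacquetdescent}, combined with the compatibility of $[\cdot,\cdot]$ with parabolic induction (which follows from the fact that $[\cdot,\cdot]$ coincides with the canonical $G$-invariant inner product on the Whittaker model, with the Jacquet integral intertwining such inner products on the inducing data), gives $c(\pi)=\prod_i c(\sigma_i)$; the problem thus reduces to $c(\sigma)=1$ for essentially square-integrable $\sigma$ on a single $\GL_d$. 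For such $\sigma$ the defining integral converges absolutely, and $c(\sigma)=1$ follows by direct evaluation of the absolutely convergent Rankin--Selberg integral at $s=1$ (an approach analogous to that of Sakellaridis--Venkatesh in \cite[\S18]{1203.0039}).

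The main obstacle is the careful matching of measure normalizations: reconciling the Tamagawa conventions on $N$, $\mira$, and $K$ with the Rankin--Selberg side and with Bernstein's invariant inner product on the Whittaker model, so that the cancellations produce exactly $c(\pi)=1$ rather than an extraneous constant. A related subtlety is the direct evaluation of $c(\sigma)$ for essentially square-integrable $\sigma$, which requires an argument independent of the unramified case in order to complete the last step of the reduction.
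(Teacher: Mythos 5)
Your reduction scheme is correct and matches the paper's: Proposition~\ref{prop: stablemc} (and its archimedean counterpart) reduces \eqref{eq: locidentwhit} to a scalar identity $c(\pi)=1$, and Proposition~\ref{prop: jacquetdescent} together with the compatibility of $[\cdot,\cdot]$ with parabolic induction (the paper invokes \cite[Appendix A]{MR2930996} for exactly this) reduces the question to essentially square-integrable inducing data. Your unramified consistency check via Proposition~\ref{prop: unram} and Macdonald's formula is also sound.

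However, there is a genuine gap at the crucial step. After the reductions, what remains is to prove $c(\sigma)=1$ for $\sigma\in\Irr_{\sqr}$, and here you offer only ``direct evaluation of the absolutely convergent Rankin--Selberg integral at $s=1$,'' which is not an argument. The defining integral $\int_N[\sigma(n)W,W^\vee]\psi_N(n)^{-1}\,dn$ does converge absolutely for square-integrable $\sigma$, but evaluating it requires an actual computation, and the unramified case does not help because the $\sigma_i$ produced by the Langlands/Jacquet-subquotient reduction are not unramified. The paper's proof supplies precisely this missing computation: after choosing $W$, $W^\vee$ whose restrictions to $\mira$ are compactly supported modulo $N$, it introduces a chain of auxiliary integrals $I_j$ ($j=1,\dots,m$) and shows $I_j=I_{j+1}$ by a Fourier-inversion argument over $F^j$ (via the formula \eqref{eq: int form mira} and the transitivity of the $\GL_j$-action on $F^j\setminus\{0\}$). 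This is the local analogue of the global Rankin--Selberg unfolding, where the role played globally by cuspidality (vanishing of degenerate terms) is played locally by the fact that those terms have measure zero. Without something of this nature, the scalar $c(\sigma)$ is not pinned down, and your proposal does not complete the proof.
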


\begin{proof}
The local identity is proved in a way similar to the unfolding of the global Rankin--Selberg integral.
The main difference is that while in the unfolding process for the global case, certain terms vanish
by cuspidality, in the local case, which is continuous in nature, the analogous terms
do not contribute because they are of measure zero.

Note that the relation \eqref{eq: locidentwhit} is independent of the choice of Haar measure on $N$.
It will be convenient to identify $N$ (as a variety) with $F^{m\choose2}$ (in the obvious way)
and take the corresponding Haar measure.

Suppose first that $\pi=\Ind\sigma$.
Using the relation \eqref{eq: redtosqrint} and the result of \cite[Appendix A]{MR2930996},
the relations \eqref{eq: innerGL_m} for $\pi$ and $\sigma$ become equivalent.
Thus we reduce to the case where $\pi\in\Irr_{\sqr}G$, in which
\[
[W,W^\vee]^{\psi_N}=\int_N[\pi(n)W,W^\vee]\psi_N(n)^{-1}\ dn.
\]
(This is mostly useful in the archimedean case.)

Assume therefore that $\pi\in\Irr_{\sqr}G$.
Since we already know that \eqref{eq: locidentwhit} holds up to a constant,
it suffices to assume that the restrictions of $W$ and $W^\vee$ to $\mira$
are compactly supported modulo $N$ (cf.~\cite{MR0404534, MR2733072} -- in fact, since
we only consider the square-integrable case the archimedean case is elementary).

For any $j=1,\dots,m$ let $\mira_j$ (resp.~$N_j$) be the mirabolic subgroup of $\GL_j$
(resp.~the group of upper unitriangular matrices of $\GL_j$).
We embed $\GL_j$ (and its subgroups) in $\GL_m$ via $g\mapsto\sm g{}{}{I_{m-j}}$.
Let
\begin{align*}
I_j&=\int_{N_j}\big(\int_{N_{j-1}\bs\GL_{j-1}}W(gn)W^\vee(g)\abs{\det g}^{j-m}\ dg\big)\ \psi_{N_j}(n)^{-1}\ dn\\&=
\int_{N_j}\big(\int_{N_j\bs\mira_j}W(gn)W^\vee(g)\abs{\det g}^{j-m}\ dg\big)\ \psi_{N_j}(n)^{-1}\ dn.
\end{align*}
The sought-after identity \eqref{eq: locidentwhit} becomes $I_m=I_1$.
We claim that for any $j=1,\dots,m-1$, if $I_{j+1}$ converges as an iterated integral then so does $I_j$ and $I_j=I_{j+1}$.
Note that $N_{j+1}=N_j\ltimes C_j$ where $C_j\simeq F^j$ is the subgroup
of unipotent matrices in $N_{j+1}$ whose upper left corner of size $j\times j$ is the identity matrix.
Since $\modulus_{\mira_j}=\abs{\det\cdot}$ we can rewrite $I_{j+1}$ as
\begin{multline*}
\int_{N_j}\int_{C_j}\big(\int_{\mira_j\bs\GL_j}\int_{N_j\bs\mira_j}W(pgvu)W^\vee(pg)\abs{\det p}^{-1}\abs{\det pg}^{j+1-m}\ dp\ dg\big)\\
\ \psi_{C_j}(v)^{-1}\ dv\ \psi_{N_j}(u)^{-1}\ du.
\end{multline*}
In order to show the equality $I_{j+1}=I_j$ we will show that
\begin{multline} \label{eq: n2p1}
\int_{C_j}\big(\int_{\mira_j\bs\GL_j}\int_{N_j\bs\mira_j}W(pgvu)W^\vee(pg)\abs{\det p}^{-1}\abs{\det pg}^{j+1-m}\ dp\ dg\big)\
\psi_{C_j}(v)^{-1}\ dv\\=\int_{N_j\bs\mira_j}W(pu)W^\vee(p)\abs{\det p}^{j-m}\ dp.
\end{multline}
To that end, note that $\mira_j$ is the stabilizer of $\psi_{C_j}$ in $\GL_j$, so that we can write the left-hand side of \eqref{eq: n2p1} as
\begin{multline} \label{eq: mira}
\int_{C_j}\big(\int_{\mira_j\bs\GL_j}\int_{N_j\bs\mira_j}W(pgu)W^\vee(pg)\abs{\det p}^{-1}\abs{\det pg}^{j+1-m}\ dp
\ \psi_{C_j}(gvg^{-1})\ dg\big)\\ \ \psi_{C_j}(v)^{-1}\ dv.
\end{multline}
Let $f_u$ be the function on $F^j-\{0\}$ (row vectors) defined by
\[
f_u(\xi_jg)=\int_{N_j\bs\mira_j}W(pgu)W^\vee(pg)\abs{\det pg}^{j-m}\ dp, \ \ g\in\GL_j
\]
where $\xi_j=(0,\dots,0,1)\in F^j$.
This is well defined since $\GL_j$ acts transitively on $F^j-\{0\}$ and
the stabilizer of $\xi_j$ is $\mira_j$.
Moreover, we have $f_u\in C_c^\infty(F^j-\{0\})$ because of the condition on $W$, $W^\vee$.
Extending by $0$, we view $f_u$ as a function in $C_c^\infty(F^j)$.
We view elements of $C_j$ as column vectors of size $j$, and hence, the group $C_j$ itself
as the dual group of $F^j$. We write $\sprod{\cdot}{\cdot}$ for the corresponding pairing on $F^j\times C_j$.
Then \eqref{eq: mira} is
\[
\int_{C_j}\big(\int_{\mira_j\bs\GL_j}f_u(\xi_jg)\psi(\sprod{\xi_jg}v)\abs{\det g}\ dg\big)
\ \psi_{C_j}(v)^{-1}\ dv,
\]
which by \eqref{eq: int form mira} and Fourier inversion is equal to
\[
\int_{C_j}\big(\int_{F^j}f_u(\eta)\psi(\sprod\eta v)\ d\eta\big)\ \psi_{C_j}(v)^{-1}\ dv=
\int_{C_j}\hat f_u(v)\ \psi(\sprod{\xi_j}v)^{-1}\ dv=f_u(\xi_j).
\]
This proves \eqref{eq: n2p1} and completes the proof of the Lemma.
\end{proof}

\begin{remark} \label{rem: cmpcsupp}
In the $p$-adic case the proof above applies directly to any unitarizable $\pi\in\Irr_{\gen}G$ and there is no need to reduce
to the square integrable case.
Indeed, if the restrictions of $W$ and $W^\vee$ to $\mira$ are compactly supported modulo $N$
then $[\pi(\cdot)W,W^\vee]$ is compactly supported on $\mira$ (and in particular, on $N$).
One way to see this is to choose an arbitrary supercuspidal representation $\tau$ of $\GL_m$
and to realize the restrictions $W\rest_\mira$ (resp. $W^\vee\rest_\mira$)
as the restrictions of Whittaker functions $W_1\in\Whit^{\psi_N}(\tau)$ (resp. $W_1^\vee\in\Whit^{\psi_N^{-1}}(\tau^\vee)$).
Then $[\pi(\cdot)W,W^\vee]=[\tau(\cdot)W_1,W_1^\vee]$ on $\mira$.
As in Remark \ref{rem: archJacquet} it will be interesting to know whether an analogous fact holds in the archimedean case, namely
if the restrictions of $W$ and $W^\vee$ to $\mira$ are Schwartz functions modulo $N$,
is the matrix coefficient $[\pi(\cdot)W,W^\vee]$ necessarily a Schwartz function on $\mira$?
\end{remark}

\begin{remark}
For a general $\pi\in\Irr_{\gen}\GL_m$ (not necessarily unitarizable) one
can still define the pairing $[\cdot,\cdot]$ by
\[
[W,W^\vee]=\int_{N\bs\mira}W(p)W^\vee(p)\abs{\det p}^s\ dp\big|_{s=0}
\]
in the sense of analytic continuation (cf.~\cite[Appendix A]{MR2930996}).
Lemma \ref{lem: whitrltnGLn} and its proof remain valid.
\end{remark}

More generally for $i=1,\dots,m$ and any $W\in\Whit^{\psi_N}(\pi)$ and $W^\vee\in\Whit^{\psi_N^{-1}}(\pi^\vee)$, let
\begin{equation}\label{eq: defAi}
A_i(W,W^\vee)=\int_{N_{m-i}\bs\GL_{m-i}}W(g)W^\vee(g)\abs{\det g}^{1-i}\ dg.
\end{equation}
In particular, $A_1(W,W^\vee)=[W,W^\vee]$.
The proof above shows the following.

\begin{lemma}\label{L: Afe}
Let $U_i$ be the unipotent radical of the parabolic subgroup of $\GL_m$ of type $(m-i,1,\dots,1)$
endowed with the Haar measure induced by the identification (of varieties) $U_i\cong F^{{m\choose 2}-{m-i\choose2}}$.
Let $\pi\in\Irr_{\gen}$. Then
\[
A_i(W,W^\vee)=\int_{U_{i-1}}[\pi(u)W,W^\vee]\psi_{U_{i-1}}(u)^{-1}\ du
\]
for any $W\in\Whit_{\psi_N}(\pi)$ and $W^\vee\in\Whit^{\psi_N^{-1}}(\pi^\vee)$ such that
$W\rest_{\mira}\in C_c^\infty(N_m\bs\mira,\psi_N)$ and
$W^\vee\rest_{\mira}\in C_c^\infty(N_m\bs\mira,\psi_N^{-1})$ and the right-hand side converges.
(At least in the $p$-adic case, the last condition is satisfied automatically in view of Remark \ref{rem: cmpcsupp}.)
In particular, if $\pi\in\Irr_{\cusp}G$ (and $F$ is $p$-adic) then this holds for all $W\in\Whit^{\psi_N}(\pi)$ and $W^\vee\in\Whit^{\psi_N^{-1}}(\pi^\vee)$.
\end{lemma}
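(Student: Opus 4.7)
The plan is to deduce the identity by iterating the Fourier inversion step that underlies the proof of Lemma~\ref{lem: whitrltnGLn}, namely the identity \eqref{eq: n2p1}. The extreme cases are easy: for $i=1$ the subgroup $U_0$ is trivial and $A_1=[W,W^\vee]$ by definition, while for $i=m$ we have $U_{m-1}=N$ and the statement reduces to Lemma~\ref{lem: whitrltnGLn}.

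To interpolate, for $k=0,1,\dots,i-1$ let $U^{(k)}$ denote the unipotent radical of the parabolic of $\GL_{m-k}$ of type $(m-i+1,1,\dots,1)$ (with $i-1-k$ ones), viewed inside $\GL_m$ via the embedding of $\GL_{m-k}$ as the top-left block. Set
\[
B^{(k)}(W,W^\vee):=\int_{U^{(k)}}\int_{N_{m-k-1}\bs\GL_{m-k-1}}W(gu)W^\vee(g)\abs{\det g}^{-k}\,dg\,\psi_{U^{(k)}}(u)^{-1}\,du.
\]
Expanding $[\pi(u)W,W^\vee]$ via the definition of $[\cdot,\cdot]$ as $\int_{N_{m-1}\bs\GL_{m-1}}W(gu)W^\vee(g)\,dg$, one sees that $B^{(0)}$ is the right-hand side of the identity to be proved, while $U^{(i-1)}$ is trivial, so that $B^{(i-1)}=A_i(W,W^\vee)$.

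The main step is then $B^{(k)}=B^{(k+1)}$ for each $k=0,\dots,i-2$. For this, write $U^{(k)}=U^{(k+1)}\ltimes C_{m-k-1}$, where $C_{m-k-1}$ is the last-column subgroup of $\GL_{m-k}$; one checks directly that $C_{m-k-1}$ is normal in $U^{(k)}$ and that $U^{(k)}\cap\mira_{m-k}$ is precisely $U^{(k+1)}$ sitting in the top-left $\GL_{m-k-1}$. Further decompose the inner integration as $N_{m-k-1}\bs\GL_{m-k-1}=(\mira_{m-k-1}\bs\GL_{m-k-1})\cdot(N_{m-k-1}\bs\mira_{m-k-1})$. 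The integration over $C_{m-k-1}\times(\mira_{m-k-1}\bs\GL_{m-k-1})$ against the restriction of $\psi_{U^{(k)}}$ to $C_{m-k-1}$ is then exactly what is collapsed by applying \eqref{eq: n2p1} with $j=m-k-1$, the parameter $u$ of that identity being replaced by our $U^{(k+1)}$-variable (which lies in $N_{m-k-1}$, so the argument applies verbatim). The resulting factor $\abs{\det}^{-1}$ increments the exponent from $-k$ to $-k-1$, and the residual inner integration $N_{m-k-1}\bs\mira_{m-k-1}\cong N_{m-k-2}\bs\GL_{m-k-2}$ is precisely what appears in $B^{(k+1)}$. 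Iterating $i-1$ times yields the lemma.

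The hard part is the verification of absolute convergence so that Fubini applies at each step and \eqref{eq: n2p1} can be invoked. In the $p$-adic case this follows from Remark~\ref{rem: cmpcsupp} by the same reasoning as in the proof of Lemma~\ref{lem: whitrltnGLn}; the support conditions on $W\rest_\mira$ and $W^\vee\rest_\mira$ propagate through the iteration to the corresponding compact-support conditions on each intermediate $\GL_{m-k}$. In the archimedean case the only additional input beyond the proof of Lemma~\ref{lem: whitrltnGLn} is the convergence of the right-hand side, which is explicitly part of the hypotheses.
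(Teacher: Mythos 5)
Your proposal is correct and follows essentially the same route as the paper: after the reindexing $j=m-k$ your $B^{(k)}$ is precisely the paper's intermediate integral $I_j$ (whose unipotent domain $N_j\cap U_{i-1}$ is exactly your $U^{(k)}$), and your step $B^{(k)}=B^{(k+1)}$ is the same collapse via \eqref{eq: n2p1} that the paper invokes with the phrase ``as in the proof of Lemma \ref{lem: whitrltnGLn}''. One cosmetic slip: $U^{(k)}\cap\mira_{m-k}$ is all of $U^{(k)}$ (unipotent upper triangular matrices lie in the mirabolic); what you actually use, and what is true, is $U^{(k)}\cap\GL_{m-k-1}=U^{(k+1)}$ together with $U^{(k)}=U^{(k+1)}\ltimes C_{m-k-1}$.
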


\begin{proof}
For $j=m-i+1,\dots,m$ let
\begin{align*}
I_j&=\int_{N_j\cap U_{i-1}}\big(\int_{N_{j-1}\bs\GL_{j-1}}W(gn)W^\vee(g)\abs{\det g}^{j-m}\ dg\big)\ \psi_{N_j}(n)^{-1}\ dn\\&=
\int_{N_j\cap U_{i-1}}\big(\int_{N_j\bs\mira_j}W(gn)W^\vee(g)\abs{\det g}^{j-m}\ dg\big)\ \psi_{N_j}(n)^{-1}\ dn.
\end{align*}
As in the proof of Lemma \ref{lem: whitrltnGLn} ones shows that
$I_j$ converges as an iterated integral and $I_j=I_{j+1}$, $j=m-i+1,\dots,m-1$.
The relation $I_{m-i+1}=I_m$ is the required identity.
\end{proof}

\begin{remark}
We refer to \cite{LMao4} for a generalization of Lemma \ref{L: Afe}
and to \cite[Lemme 3.7]{Wald2} for a closely related statement.
\end{remark}

\section{Classical and metaplectic groups} \label{sec: classical groups}
In the case of classical groups we can formulate a variant of Conjecture \ref{conj: main} without appealing to Arthur's conjectures.
Instead, we use the results of Cogdell--Piatetski-Shapiro--Shahidi (\cite{MR2767514}, building on earlier works
of Cogdell--Kim--Piatetski-Shapiro--Shahidi and Kim--Krishnamurthy \cite{MR1863734, MR2075885, MR2127169, MR2149370})
and Ginzburg--Rallis--Soudry (\cite{MR2848523} and previous works cited therein) describing the generic representations in terms of representations of $\GL_n$.
This variant is also applicable for the (non-algebraic) metaplectic group.

More precisely, let $\bf G$ be either $\SO(n)$ (the special orthogonal group of a split or a quasi-split quadratic space),
$\Sp_n$ (the symplectic group of a symplectic space of dimension $2n$),
$\Mp_n$ (the metaplectic double cover of $\Sp_n$) or $\U(n)$ (the quasi-split unitary group
of a hermitian space of dimension $n$).

In the even orthogonal case let $D\in F^*/(F^*)^2$ be the discriminant of the quadratic space
(with the sign convention so that $\bf G$ is split if and only if $D\in(F^*)^2$).
Let $\chi_D$ be the Hecke character $(D,\cdot)$ where $(\cdot,\cdot)$ denotes the quadratic Hilbert symbol.

In the unitary case we write $E$ for the quadratic extension of $F$ over which $\bf G$ splits
(i.e., over which the hermitian space is defined).
In all other cases we set $E=F$.

As usual, fix a maximal unipotent subgroup $\bf N$ of $\bf G$ and a non-degenerate character $\psi_N$ of $N(\A)$ trivial on $N(F)$.
Since we are free to choose $\psi_N$, in the even orthogonal case we will take $\psi_N$ of a special form
as in \cite{MR2848523} so that $(N,\psi_N)$ is preserved under conjugation by an element of order two in $\Orth(2n)$.
We will denote this outer involution by $\theta$. In all other cases except $\SO(2n)$ set $\theta=\id$
(and there is no restriction on $\psi_N$).

Consider the set $\imgset{G}$ whose elements are sets $\{\pi_1,\dots,\pi_k\}$ (mutually inequivalent representations)
where $\pi_i$ is a cuspidal irreducible representation of $\GL_{n_i}(\A_E)$, $i=1,\dots,k$ such that
\begin{enumerate}
\item $n_1+\dots+n_k=m=\begin{cases}2n&{\bf G}=\SO(2n+1), \Mp_n\text{ or }\SO(2n),\\2n+1&{\bf G}=\Sp_n,\\n&{\bf G}=\U(n).\end{cases}$
\item $L(1,\pi_i,r)=\infty$ for all $i$ where $r=\begin{cases}\wedge^2&{\bf G}=\SO(2n+1)\text{ or }\Mp_n,
\\\sym^2&{\bf G}=\Sp_n\text{ or }\SO(2n),\\\Asai^-&{\bf G}=\U(2n),\\\Asai^+&{\bf G}=\U(2n+1).\end{cases}$
\\(We could have considered instead the partial $L$-function since the local $L$-factors are holomorphic $s=1$.)
Here $\Asai^{\pm}$ are the so-called Asai representations (see e.g., \cite{MR2127169, MR2149370} for the precise definition).
In the case of $\Mp_n$ we also require that $L(\frac12,\pi_i)\ne0$ for all $i$.
\item $\prod_{i=1}^k\omega_{\pi_i}=1$ if ${\bf G}=\Sp_n$; $\prod_{i=1}^k\omega_{\pi_i}=\chi_D$ if ${\bf G}=\SO(2n)$.
(In all other cases we automatically have $\omega_{\pi_i}\rest_{\A^*}\equiv1$ for all $i$.)
\end{enumerate}
In all algebraic cases except $\SO(2n)$ the set $\imgset{G}$ corresponds exactly to $\tparams(G)$
and if $\phi$ is the corresponding parameter then $\cent_\phi\simeq(\Z/2\Z)^{k-1}$ (see \cite{Artendclass} and \cite{1206.0882}).
In the case of $\SO(2n)$ the situation is the following.
The set $\imgset{G}$ (resp., $\tparams(G)$) corresponds to the equivalence classes, under conjugation by $\Orth(2n,\C)$ (resp., $\SO(2n,\C)$),
of homomorphism  $\phi:\Langlands_F\rightarrow\Orth(2n,\C)$ with bounded image whose composition with the determinant corresponds
to $\chi_D$ under class field theory.
Thus, there is a surjective map $\tparams(G)\rightarrow\imgset{G}$ whose fiber over $\{\pi_1,\dots,\pi_k\}$ is a singleton unless $n_i$ is even
for all $i$ in which case the fiber consists of two elements.
We have $\cent_\phi\simeq(\Z/2\Z)^l$ where $l=k-1$ if $n_i$ is even for all $i$ and $l=k-2$ otherwise.

We denote by $\Cusp_{\psi_N}G$ the set of irreducible constituents of $L^2_{\cusp,\psi_N}(G(F)\bs G(\A))$;
in the case of $\Mp_n$ we also require that the $\psi$-theta lift to $\SO(2n-1)$ vanishes
where $\psi$ is related to $\psi_N$ as in \cite[\S11]{MR2848523}.
(In the case $n=1$ this excludes the so-called exceptional representations.)

Let $\stdG{G}:\,^LG\rightarrow\,^L(\Res_{E/F}\GL_m)$ be the $L$-homomorphism such that
\[
\stdG{G}\rest_{\widehat G}=\begin{cases}\Sp_n(\C)\hookrightarrow\GL_{2n}(\C)&{\bf G}=\SO(2n+1),\Mp_n\\
\SO(2n+1,\C)\hookrightarrow\GL_{2n+1}(\C)&{\bf G}=\Sp_n,\\
\SO(2n,\C)\hookrightarrow\GL_{2n}(\C)&{\bf G}=\SO(2n),\\
\GL(n,\C)\xrightarrow{g\mapsto(g,J_n^{-1}\,^tg^{-1}J_n)}\GL(n,\C)\times\GL(n,\C)&{\bf G}=\U(n),\end{cases}
\]
for a suitable hermitian form $J_n$ (cf. \cite[\S1]{MR2767514} for more details).
By \cite{MR2767514} and \cite{MR2848523}
for any $\psi_N$-generic (non-exceptional in the metaplectic case)
irreducible cuspidal representation $\pi$ of $G(\A)$
the functorial transfer of $\pi$ to $\GL_N(\A_E)$ under $\stdG{G}$
exists and is of the form $\pi_1\boxplus\dots\boxplus\pi_k$ (isobaric sum) where $\{\pi_1,\dots,\pi_k\}\in\imgset{G}$.
(In the metaplectic case, the functorial transfer is defined with respect to a character $\psi$ which is compatible with $\psi_N$:
it is also compatible with the theta correspondence.)
Note that $L^S(s,\pi,\Ad)=L^S(s,\pi_1\boxplus\dots\boxplus\pi_k,\tilde r)$ where 
$\tilde r=\begin{cases}\sym^2&{\bf G}=\SO(2n+1)\text{ or }\Mp_n,
\\\wedge^2&{\bf G}=\Sp_n\text{ or }\SO(2n),\\\Asai^+&{\bf G}=\U(2n),\\\Asai^-&{\bf G}=\U(2n+1).\end{cases}$
\\Hence, the property \eqref{eq: Ad propr} holds for all $\psi_N$-generic $\pi$ (non-exceptional in the metaplectic case),
so that $\mainconst{\pi}$ is well defined.

In addition, the descent method provides for any $\{\pi_1,\dots,\pi_k\}\in\imgset{G}$, a subrepresentation
$\sigma=\sigma(\{\pi_1,\dots,\pi_k\})$ of $L^2_{\cusp,\psi_N}(G(F)\bs G(\A))$ such that
\begin{enumerate}
\item For any irreducible constituent $\sigma'$ of $\sigma$ we have $\stdG{G}(\sigma')=\pi_1\boxplus\dots\boxplus\pi_k$.
\item No $\psi_N$-generic cuspidal representation whose functorial lift is $\pi_1\boxplus\dots\boxplus\pi_k$ is
orthogonal to $\sigma$ in $L^2(G(F)\bs G(\A))$.
\end{enumerate}
(See \cite[Theorem 11.2]{MR2075885}.)
In the case of $\SO(2n+1)$ it was proved that $\sigma$ is irreducible (see \cite{MR1846354}, which is based on \cite{MR1983781}).
It follows from the results of \cite{MR2330445} that $\sigma$ is irreducible in the case ${\bf G}=\Mp_n$ as well.
It is expected that also in the remaining cases either $\sigma$ is irreducible and $\theta$-invariant or that (in the $\SO(2n)$ case)
$\sigma=\tau\oplus\theta(\tau)$ where $\tau$ is irreducible and $\theta(\tau)\not\simeq\tau$.


In the algebraic cases this construction is closely related to the (hypothetical) representations $\pi^{\psi_N}(\phi)$ of \S\ref{sec: pipsi}.
More precisely, in all cases other than $\SO(2n)$, one expects multiplicity one for $G$ and therefore if
$\{\pi_1,\dots,\pi_k\}\in\imgset{G}$, $\phi\in\tparams(G)$ is the corresponding parameter,
and $\sigma=\sigma^{\psi_N}(\{\pi_1,\dots,\pi_k\})$ then $\sigma=\pi^{\psi_N}(\phi)$.
In the case of $\SO(2n)$ the situation is the following.
If not all $n_i$'s are even then there is a unique $\phi\in\tparams(G)$
above $\{\pi_1,\dots,\pi_k\}$ and as before $\sigma=\pi^{\psi_N}(\phi)$
(which is expected to be irreducible and $\theta$-invariant, cf.~\cite{MR1248702}).
Suppose now that all $n_i$'s are even. Let $\{\phi_1,\phi_2\}$ be the two parameters in $\tparams(G)$ above $\{\pi_1,\dots,\pi_k\}$.
Recall that we expect that either $\sigma$ is irreducible or $\sigma=\tau\oplus\theta(\tau)$ with $\tau$ irreducible and $\theta(\tau)\not\simeq\tau$.
In the latter case $\{\pi^{\psi_N}(\phi_1),\pi^{\psi_N}(\phi_2)\}=\{\tau,\theta(\tau)\}$.
On the other hand, if $\sigma$ is irreducible then $\sigma$ is not one of the $\pi^{\psi_N}(\phi_i)$'s
since they define equivalent representations on which $\whit^{\psi_N}$ is non-zero.
(Another way to say it: $\sigma$ is $\theta$-invariant while $\theta(\pi^{\psi_N}(\phi_1))=\pi^{\psi_N}(\phi_2)$.)
Instead, $\pi^{\psi_N}(\phi_1)\oplus\pi^{\psi_N}(\phi_2)$ is the isotypic component of $\sigma$ in $L^2_{\cusp}(G(F)\bs G(\A))$
and the space of $\sigma$ is
\[
\{\varphi_1+\varphi_2:\varphi_i\in\pi^{\psi_N}(\phi_i), i=1,2, \whit^{\psi_N}(\cdot,\varphi_1)\equiv\whit^{\psi_N}(\cdot,\varphi_2)\}.
\]
It follows that $\mainconst{\sigma}=\frac14(\mainconst{\pi^{\psi_N}(\phi_1)}+\mainconst{\pi^{\psi_N}(\phi_2)})$.

Altogether we can formulate the following variant of Conjecture \ref{conj: main} for classical groups
(which also covers the metaplectic group).
\begin{conjecture} \label{conj: classical}
Suppose that $\pi$ is an irreducible constituent of $\sigma^{\psi_N}(\{\pi_1,\dots,\pi_k\})$
and let $s$ be the size of the stabilizer of $\pi$ under $\{1,\theta\}$. (In particular, $s=1$ unless $G=\SO(2n)$.)
Then $\mainconst{\pi}=2^{k-1}/s$.
\end{conjecture}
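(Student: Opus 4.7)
The plan is to follow the strategy pioneered by Waldspurger in the metaplectic $n=1$ case and Jacquet for higher rank: reduce the global identity to a local one via the descent method of Ginzburg--Rallis--Soudry, and then establish the local identity. First, I would realize $\pi$ concretely through descent. Given $\{\pi_1,\dots,\pi_k\}\in\imgset{G}$, form the isobaric sum $\Pi=\pi_1\boxplus\dots\boxplus\pi_k$ on $\GL_m(\A_E)$, induce to a degenerate Eisenstein series on a larger classical group $\widetilde G$, take its residue at the point where the assumption $L(1,\pi_i,r)=\infty$ produces a pole, and apply the Fourier--Jacobi descent to obtain a vector $\varphi\in\sigma$. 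Both $\whit^{\psi_N}(\varphi)$ and $(\varphi,\varphi^\vee)_\pi$ then admit unfolded expressions in terms of data on $\GL_m(\A_E)$.

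Next I would compute $\whit^{\psi_N}(\varphi)$ by repeated unfolding: combining the Whittaker integral on $G$ with the Fourier--Jacobi coefficient on $\widetilde G$, one is reduced to the Whittaker coefficient of the residual Eisenstein series, which by a Shahidi-type computation factors through a product of local Jacquet integrals applied to Whittaker functions of the $\pi_i$. Dually, via a Rallis-type inner product formula, $(\varphi,\varphi^\vee)_\pi$ should be expressible in terms of $\mathrm{Res}_{s=1}L^S(s,\Pi,\tilde r)$ together with local Whittaker pairings on each $\GL_{n_i}(F_S)$. The two computations can then be combined to extract $\mainconst{\pi}$.

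Having produced these two global identities, I would divide and invoke Theorem \ref{thm: GLn}, which asserts $\mainconst{\pi_i}=1$ on each $\GL_{n_i}$-factor and supplies the Rankin--Selberg expression $L^S(s,\pi,\Ad)=L^S(s,\Pi,\tilde r)$. Everything other than a product of local Whittaker pairings on $G(F_S)$ should cancel, yielding exactly a formula of the shape \eqref{def: cpi}. The combinatorial factor $2^{k-1}$ would emerge from the $(\Z/2\Z)^{k-1}$ worth of poles of the Eisenstein series being averaged in the descent (reflecting the structure of $\cent_\phi$ in the classical-group setting), while the division by $s$ in the $\SO(2n)$ case would come from $\theta$-averaging a $\theta$-invariant residual datum, by the computation of $\mainconst{\sigma}$ sketched just before the statement of the conjecture.

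The principal obstacle is precisely the local identity needed to justify the global unfolding: at every place $v$, one must identify $(\cdot,\cdot)_{\pi_v}^{\psi_N}$ on $G(F_v)$ with the explicit pairing produced on the descent side from Whittaker models of the $\pi_{i,v}$. In the $p$-adic case this appears tractable via Proposition \ref{prop: jacquetdescent} together with a careful analysis of normalized intertwining operators at the relevant pole, and indeed is the content of the announced sequel for the metaplectic group. In the archimedean case, however, the very definition of the Whittaker pairing in \S\ref{sec: arch} is \emph{ad hoc}, and any rigorous proof of the local identity would first require settling the analytic questions flagged in Remarks \ref{rem: archJacquet} and \ref{rem: cmpcsupp} so that one can manipulate the relevant integrals as in the $p$-adic argument of Lemma \ref{lem: whitrltnGLn}. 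This is where I would expect the bulk of the technical work to lie.
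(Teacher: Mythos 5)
You should first note the status of the statement: Conjecture \ref{conj: classical} is exactly that, a conjecture, and the paper offers no proof of it. What the paper does prove are (i) the $\GL_m$ input (Theorem \ref{thm: GLn}) and the formal reductions of \S\ref{sec: restder}--\ref{sec: projG}, and (ii) certain low-rank cases in \S\ref{sec: examples}, where an exceptional isomorphism identifies $G$ with a subgroup of a product of general linear groups, so that Lemma \ref{lem: restG} gives $\mainconst{\pi}=\abs{X(\tilde\pi)}$ and the conjecture becomes the unconditional combinatorial identity $\abs{X(\tilde\pi)}=\abs{\cent_{\tilde\pi}}$, checked case by case via the structure of the isobaric decomposition of the relevant transfer. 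Your proposal is instead a sketch of the authors' announced program (reduce the global statement to a local identity via the descent of Ginzburg--Rallis--Soudry, in the spirit of Waldspurger and Jacquet), which the paper explicitly defers to a sequel and claims only for the metaplectic group over $p$-adic fields. So it cannot be counted as a proof of the statement, nor does it reproduce the arguments the paper actually contains.

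Beyond the local identity you yourself flag as open (and which is genuinely the heart of the matter, not a technicality), two global steps in your sketch would not go through as described. First, the ``Rallis-type inner product formula'' for $(\varphi,\varphi^\vee)_\pi$ of descent vectors is not available: the Petersson norm of the descent cannot be computed by a convergent unfolding, and expressing it in terms of $\lim_{s\to1}L^S(s,\pi_1\boxplus\dots\boxplus\pi_k,\tilde r)$ together with local pairings is essentially equivalent to the conjecture itself (the ratio of $\abs{\whit^{\psi_N}(\varphi)}^2$ to $(\varphi,\varphi^\vee)_\pi$ \emph{is} the quantity $\mainconst{\pi}$), so the argument as outlined is circular at that point. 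Second, the factor $2^{k-1}$ does not ``emerge from $(\Z/2\Z)^{k-1}$ worth of poles being averaged'': the residual Eisenstein series used in the descent is attached to the single datum $\pi_1\otimes\dots\otimes\pi_k$ and has one relevant (multi-)residue; the power of $2$ in the conjecture reflects $\abs{\cent_\phi}$, i.e.\ the size of the fiber of the near-equivalence class over its $\GL_m$-transfer, and in the paper it enters only through the normalization in \eqref{def: cpi} and the discussion preceding the conjecture (including the $\theta$-averaging computation $\mainconst{\sigma}=\frac14(\mainconst{\pi^{\psi_N}(\phi_1)}+\mainconst{\pi^{\psi_N}(\phi_2)})$ in the $\SO(2n)$ case), not from any mechanism visible in an unfolding of the descent integral. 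Until the local identity is formulated and proved at every place (including the archimedean ones, where the pairing of \S\ref{sec: arch} is defined via Proposition \ref{prop: jacquetdescent} rather than by a stable integral) and the global inner-product problem for the descent is handled without assuming the conclusion, the proposal remains a plausible research plan rather than a proof.
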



Conjecture \ref{conj: classical} is the combination of Conjectures \ref{conj: globalclassical} and \ref{conj: metplectic global}
stated in the introduction of the paper.
In our work in progress we will reduce these conjectures
to local conjectures and prove the latter in the metaplectic case for $p$-adic fields.
(The determination of the global constant in Conjecture \ref{conj: metplectic global} is based on this work.)





\begin{remark}
A similar picture holds for the groups $G=\Gspin(n)$ \cite{MR2219256, 1101.3467, MR2505178, 1110.6788}.
If we fix the central character $\chi$ then the set $\Pi^G(\GL_m)$ (with $m=n$ if $n$ is even and $m=n-1$ if $n$ is odd)
is defined analogously except that
the $L$-function condition is changed to $L(1,\sym^2\pi_i\otimes\chi^{-1})=\infty$
or $L(1,\wedge^2\pi_i\otimes\chi^{-1})=\infty$ depending on the parity of $n$.
Otherwise, Conjecture \ref{conj: classical} is unchanged.
\end{remark}

\section{Some examples} \label{sec: examples}
We end the paper by proving some low rank cases of Conjecture \ref{conj: classical}.
These cases are closely related to the case of the general linear group.
Roughly speaking, we may recast Conjecture \ref{conj: classical} in each case as a relation between the size of a certain group of self-twists of
a cuspidal representations $\pi$ of $\GL_n$ and
the structure of the isobaric decomposition of a certain functorial transfer of $\pi$.
While these relations are only valid for small $n$ and are purely fortuitous, they match up nicely
with known results in the literature about the cuspidality of certain functorial transfers.
(We mention \cite{MR2899809, MR2522032, MR2369494, MR2094113, MR2052020, MR1874921, MR1792292, MR1923967, MR1890650, MR1937203, MR2767509}
for a partial list of results in this direction.)

In the following we will have the situation of \S\ref{sec: restder}.
The group $\bf G$ will be either a classical group $\bf G'$ or a cover thereof by an induced torus
(so that by the analysis of \S\ref{sec: projG} we can replace $\bf G'$ by $\bf G$ without any loss of information).
For any irreducible cuspidal representation $\tilde\pi$ of $\widetilde G(\A)$ we denote by
$X(\tilde\pi)$ the group of characters of $\widetilde G(\A)$, trivial on $G(\A)\widetilde G(F)$,
such that $\tilde\pi\otimes\omega=\tilde\pi$.
In the cases at hand, because of an exceptional isomorphism, the group $\widetilde G$ will be the product of restriction of scalars
of general linear groups. In particular,
\begin{enumerate}
\item $\widetilde G$ has multiplicity one.
\item $\mainconst{\tilde\pi}=1$ for any $\tilde\pi$.
\item \label{num: propconst} $\mainconst{\pi}=\abs{X(\tilde\pi)}$ for the $\psi_N$-generic constituent $\pi$ of $V_{\tilde\pi}'$
(Lemma \ref{lem: restG}).
\end{enumerate}
We will also have a homomorphism $\stdG{\widetilde G}:\,^L\widetilde G\rightarrow\,^L(\Res_{E/F}\GL_m)$,
which factors through an embedding $\stdG{G}$ of $^LG$, where either $E=F$ or $E$ is quadratic extension of $F$.
The restriction of $\stdG{G}$ to $^LG'$ coincides with the embedding $\stdG{G'}$ defined in \S\ref{sec: classical groups}.
If $\stdG{\widetilde G}(\tilde\pi)=\pi_1\boxplus\dots\boxplus\pi_k$ with $\pi_i$ cuspidal representations of $\GL_{n_i}(\A_E)$,
$n_1+\dots+n_k=m$ then we write $\cent_{\tilde\pi}=(\Z/2\Z)^l$ where $l=k-1$ except for the case where $G'=\SO(2n)$
and not all $n_i$'s are even in which $l=k-2$.
Thus, by property \eqref{num: propconst} above, Conjecture \ref{conj: classical} boils down to the equality $\abs{\cent_{\tilde\pi}}=\abs{X(\tilde\pi)}$.
Note that if we have Langlands's group $\Langlands_F$ in our disposal and let $\tilde\phi:\Langlands_F\rightarrow\,^L\tilde G$ be the
Langlands parameter of $\tilde\pi$ then $\pi=\pi^{\psi_N}(\phi)$ where $\phi\in\tparams(G)$ is the composition of $\tilde\phi$
with the projection $^L\widetilde G\rightarrow\,^LG$. That's why the case of $\SO(2n)$ is consistent with Conjecture \ref{conj: classical},
where we considered representations which are not necessary of the form $\pi^{\psi_N}(\phi)$ -- see the discussion before Conjecture \ref{conj: classical}.

\begin{remark} \label{rem: multone}
Introduce two equivalence relations on the set of irreducible cuspidal representations of $\widetilde G(\A)$:
$\pi_1\sim\pi_2$ (resp., $\pi_1\sim_w\pi_2$) if there exists a character $\omega$ of $\tilde G(F)G(\A)\bs \tilde G(\A)$ (resp.,
$G(\A)\bs\tilde G(\A)$) such that $\pi_2\simeq\pi_1\otimes\omega$.
Then by \cite[Theorem 4.13 and Lemma 5.3]{MR2918491}, $G$ has multiplicity one if and only if the two equivalence relations coincide.
\end{remark}

We will use the following notation.
If $E$ is a quadratic extension of $F$ let $\omega_{E/F}$ be the corresponding quadratic character of $\A_F^*$
and (unlike in \S\ref{sec: classical groups}) $\theta$ will denote the non-trivial Galois involution.
If $\mu$ is a Hecke character of $\A_E^*$ then we will write
$\AI_{E/F}(\mu)$ for the corresponding dihedral representation of $\GL_2$.
It is cuspidal unless $\theta(\mu)=\mu$, or equivalently, $\mu$ factors through the norm map.
If $\chi$ is a Hecke character of $\A_F^*$ then we write $\chi_E$ for the Hecke character of $\A_E^*$
given by composing $\chi$ with the norm map.
We also write $\BC_{E/F}(\pi)$ for the base change from $\GL_m(F)$ to $\GL_m(E)$.

\subsection{$\U(1)$, $\SO(2)$, $\SO(3)$, $\Sp_1$}

The simplest case is $G'=\U(1)$, for which we can take $\widetilde G=G=\Res_{E/F}\GL_1$ and $\stdG{\widetilde G}=\id$.
Of course in this case $X(\tilde\pi)$ is always trivial and $\stdG{\widetilde G}(\tilde\pi)$ is cuspidal.

Similarly for $G'=\SO(2)$ we take $\widetilde G=G=\Res_{E/F}\GL_1$ where $E$ is the quadratic \'etale
algebra defined by the discriminant and $\stdG{\widetilde G}$ the two-dimensional representation defining automorphic induction.
In this case $X(\tilde\pi)$ trivial; $\AI_{E/F}\tilde\pi$ is cuspidal if and only if $\tilde\pi$ is not
$\theta$-invariant but in any case $\cent_{\tilde\pi}$ is trivial.

For $G'=\SO(3)$ (split) we take $\widetilde G=G=\GL_2$ and $\stdG{\widetilde G}=\id$.
Then $X(\tilde\pi)$ is always trivial and $\stdG{\widetilde G}(\tilde\pi)$ is always cuspidal.

Consider the case $G'=\Sp_1=\SL_2$. We take $G=G'$, $\widetilde G=\GL_2$ and $\stdG{\widetilde G}$ to be the adjoint representation.
Let $\tilde\pi\in\Cusp\widetilde G$. Here $X(\tilde\pi)$ is as in Corollary \ref{cor: SLm}.
There are three possibilities (cf.~\cite{MR540902}):
\begin{enumerate}
\item $X(\tilde\pi)=1$,
\item $\abs{X(\tilde\pi)}=2$,
\item $\abs{X(\tilde\pi)}=4$.
\end{enumerate}
Let $\Pi$ be the (adjoint) lifting of $\tilde\pi$ to $\GL_3$ \cite{MR533066}.
In the first case $\tilde\pi$ is not dihedral and $\Pi$ is cuspidal.
In the second case, let $E$ be the quadratic extension of $F$ defined by the non-trivial
element $\omega$ of $X(\tilde\pi)$ and let $\theta$ be the Galois involution of $E/F$.
Then $\tilde\pi=\AI_{E/F}(\mu)$ for some Hecke character $\mu$ of $\A_E^*$, $\theta(\mu)/\mu$ is not quadratic
(i.e., not $\theta$-invariant) and $\Pi=\omega\boxplus\AI_{E/F}(\theta(\mu)/\mu)$.
Finally, in the third case the group $X(\tilde\pi)=\{1,\omega_1,\omega_2,\omega_3\}$ defines a biquadratic extension $K$ of $F$ and
$\Pi=\omega_1\boxplus\omega_2\boxplus\omega_3$.
In all cases we have $\abs{X(\tilde\pi)}=\abs{\cent_{\tilde\pi}}$.

Recall that $\SL_2$ has multiplicity one \cite{MR1792292}.
In other words if $\tilde\pi_1,\tilde\pi_2\in\Cusp\GL_2$ and there exists a character $\omega$ of $\A^*$ such that
$\tilde\pi_2=\tilde\pi_1\otimes\omega$ then we can choose such $\omega$ to be trivial on $F^*$ (i.e., a Hecke character).

\subsection{$\U(2)$}
Consider now $G'=\U(2)$. The group $\GU(2)$ of unitary similitudes is the quotient of $\widetilde G=\GL_2\times\Res_{E/F}\GL_1$
by $\GL_1$ embedded diagonally.
In this identification the similitude factor is $(g,x)\mapsto\det g\Nm_{E/F}(x)^{-1}$.
Therefore we take $G$ to be the kernel of $\det g\Nm_{E/F}(x)^{-1}$
and $\stdG{\widetilde G}:\,^L\widetilde G\rightarrow\,^L\Res_{E/F}\GL_2$ so that the functorial transfer of $(\pi,\mu)$
is $\BC_{E/F}(\pi)\otimes\mu$.

Note that $X((\pi,\mu))$ is the group of Hecke characters $\chi$ of $\A^*$ such that $\pi=\pi\otimes\chi$
and $\mu=\mu\chi_E^{-1}$. Hence, $X((\pi,\mu))=1$ unless $\pi\otimes\omega_{E/F}=\pi$, i.e., unless $\pi$ is dihedral with respect to $E/F$,
in which case $X((\pi,\mu))=\{1,\omega_{E/F}\}$. The condition for the non-cuspidality of $\BC_{E/F}(\pi)$ is also
that $\pi$ is dihedral with respect to $E/F$. Hence indeed $\abs{\cent_{(\pi,\mu)}}=\abs{X((\pi,\mu))}$.

We also note that group $G$ (and hence $G'$) has multiplicity one.
This is easy to see from Remark \ref{rem: multone}.
(This is of course not a new result, but we will use the argument later.)
Indeed, suppose that $\pi_1, \pi_2\in\Cusp\GL_2$,
$\mu_1$, $\mu_2$ are two Hecke characters of $\A_E^*$ and there exists a character $\chi$ of $\A_F^*$ such that
$(\pi_1\otimes\chi,\mu_1\chi_E^{-1})\simeq(\pi_2,\mu_2)$.
We need to show that we can take such $\chi$ which is trivial on $F^*$.
Note that our assumption implies that $\mu_1/\mu_2$ is Galois invariant and hence can be written
as $\chi'_E$ for some Hecke character $\chi'$ of $\A_F^*$. Therefore, $\chi_v\circ\Nm_{E_v/F_v}=\chi_v'\circ\Nm_{E_v/F_v}$ for all $v$.
Thus, for all $v$, either $(\pi_1)_v\otimes\chi_v'=(\pi_2)_v$ or (in the case where $v$ is inert) $(\pi_1)_v\otimes\chi_v'=(\pi_2)_v\otimes\omega_{E_v/F_v}$.
It follows that $\BC_{E/F}(\pi_1\otimes\chi')=\BC_{E/F}(\pi_2)$ and therefore either $\pi_2=\pi_1\otimes\chi'$ or $\pi_2=\pi_1\otimes\chi'\omega_{E/F}$.
The conclusion follows.

\subsection{Split $\SO(4)$}
Consider now the case where $G'$ is the split $\SO(2,2)$.
Then $G'$ is the quotient of group
\[
G=\{(g_1,g_2)\in\GL_2\times\GL_2:\det g_1=\det g_2\}
\]
by the center of $\GL_2$ embedded diagonally.
We take $\widetilde G=\GL_2\times\GL_2$ and $\stdG{\widetilde G}$ the tensor product to $\GL_4$.
We denote the functorial transfer of $(\pi_1,\pi_2)$ by $\pi_1\boxtimes\pi_2$.
Its existence was first proved by Ramakrishnan in \cite{MR1792292}.

We have
\[
X((\pi_1,\pi_2))=X'(\pi_1)\cap X'(\pi_2)
\]
where $X'(\pi)$ is the group of Hecke characters $\omega$ such that $\pi\otimes\omega=\pi$.

Let us see that $\abs{\cent_{(\pi_1,\pi_2)}}=\abs{X((\pi_1,\pi_2))}$.
The transfer $\pi_1\boxtimes\pi_2$ is cuspidal unless one of the following conditions holds (\cite{MR1792292}, cf.~also \cite{MR1936579}
and \cite[Appendix]{MR2899809}):
\begin{enumerate}
\item $\pi_1$ is not dihedral (i.e., $X'(\pi_1)=1$) and $\pi_2=\pi_1\otimes\chi$ for some Hecke character $\chi$, or
\item There exists a quadratic extension $E/F$ and Hecke characters $\mu_1$, $\mu_2$ of $\A_E^*$ such that
$\pi_i=\AI_{E/F}(\mu_i)$, $i=1,2$.
\end{enumerate}

In the first case, $X((\pi_1,\pi_2))$ is trivial while $\pi_1\boxtimes\pi_2$ is of type $(3,1)$,
namely it has the form $\sym^2(\pi_1)\otimes\chi\boxplus\omega_\pi\chi$, and $\cent_{(\pi_1,\pi_2)}=1$.

The second case is analyzed in the following lemma.
\begin{lemma} \label{lem: splitSO4}
Suppose that $\pi_i=\AI_{E/F}(\mu_i)$, $i=1,2$.
Then there are two possibilities. If
\begin{equation} \label{eq: excpt}
\theta(\mu_1)/\mu_1=\theta(\mu_2)/\mu_2\text{ and they are quadratic},
\end{equation}
and $K$ is the quadratic extension of $E$ defined by $\theta(\mu_1)/\mu_1$
then $K$ is a biquadratic extension of $F$, $\abs{\cent_{(\pi_1,\pi_2)}}=4$
and $X((\pi_1,\pi_2))$ is the group of Hecke characters of $\A_F^*$
which are trivial on the norms from $K$.
If \eqref{eq: excpt} is not satisfied then $\abs{\cent_{(\pi_1,\pi_2)}}=2$ and $X((\pi_1,\pi_2))=\{1,\omega_{E/F}\}$.
\end{lemma}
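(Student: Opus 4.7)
The plan is to combine Mackey's decomposition of the tensor product of the two induced representations with a direct character-theoretic computation of the self-twist groups $X'(\pi_i)$.

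Since $\AI_{E/F}(\mu_i)$ corresponds to $\Ind_{W_E}^{W_F}\mu_i$, Mackey's formula yields
\[
\pi_1 \boxtimes \pi_2 = \AI_{E/F}(\mu_1\mu_2) \boxplus \AI_{E/F}(\mu_1\theta(\mu_2)),
\]
and each summand $\AI_{E/F}(\nu)$ is cuspidal unless $\nu$ is $\theta$-invariant, in which case $\nu = \eta_E$ for some Hecke character $\eta$ of $\A^*$ and the summand decomposes as $\eta \boxplus \eta\omega_{E/F}$. Now $\mu_1\mu_2$ is $\theta$-invariant iff $\theta(\mu_1)/\mu_1 = \mu_2/\theta(\mu_2)$, and $\mu_1\theta(\mu_2)$ is $\theta$-invariant iff $\theta(\mu_1)/\mu_1 = \theta(\mu_2)/\mu_2$. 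Both conditions hold simultaneously iff $\theta(\mu_2)/\mu_2$ equals its own inverse, i.e.\ is quadratic, and coincides with $\theta(\mu_1)/\mu_1$—precisely when \eqref{eq: excpt} holds. In that case $\pi_1\boxtimes\pi_2$ is an isobaric sum of four Hecke characters, so $k=4$ with all $n_i=1$ (not all even), giving $|\cent_{(\pi_1,\pi_2)}|=2^{k-2}=4$. If neither condition holds, $\pi_1\boxtimes\pi_2$ is a sum of two cuspidal $\GL_2$-representations ($k=2$, all $n_i=2$ even, $|\cent|=2^{k-1}=2$); if exactly one condition holds, $k=3$ with $n_i=(1,1,2)$ and $|\cent|=2^{k-2}=2$.

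To compute $X'(\pi)$ for $\pi=\AI_{E/F}(\mu)$, note that $\pi\otimes\chi = \AI_{E/F}(\mu\chi_E)$, so $\chi\in X'(\pi)$ iff $\chi_E\in\{1,\theta(\mu)/\mu\}$. The first alternative yields $\{1,\omega_{E/F}\}\subseteq X'(\pi)$; the second requires $\theta(\mu)/\mu$ to factor through $\Nm_{E/F}$, which by Hilbert 90 is equivalent to its being $\theta$-fixed, hence quadratic (since $\theta(\theta(\mu)/\mu) = (\theta(\mu)/\mu)^{-1}$). Moreover, any self-twist character of a cuspidal representation of $\GL_2$ is automatically quadratic by comparing central characters, so the corresponding $\chi$ itself has order dividing $2$. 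Intersecting, $X((\pi_1,\pi_2)) = X'(\pi_1)\cap X'(\pi_2)$ equals $\{1,\omega_{E/F}\}$ unless both $\theta(\mu_i)/\mu_i$ are quadratic and equal—i.e.\ exactly when \eqref{eq: excpt} holds—in which case $|X|=4$. This matches $|\cent_{(\pi_1,\pi_2)}|$ in every case.

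Finally, in the exceptional case write $\theta(\mu_1)/\mu_1 = \chi_E$ for a quadratic Hecke character $\chi$ of $\A_F^*$, necessarily distinct from $1$ and $\omega_{E/F}$. Let $F_\chi/F$ be the quadratic extension cut out by $\chi$. The quadratic extension $K/E$ defined by $\chi_E$ coincides with $EF_\chi/E$, so $K = EF_\chi$ is a biquadratic extension of $F$. By class field theory, the group of Hecke characters of $\A_F^*$ trivial on $F^*\Nm_{K/F}\A_K^*$ has order $[K:F]=4$ and visibly contains $\{1,\omega_{E/F},\chi,\chi\omega_{E/F}\}$, so both groups coincide with $X((\pi_1,\pi_2))$. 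The main obstacle is the class-field-theoretic bookkeeping of the exceptional case—in particular, ruling out the possibility that $K/F$ could be cyclic of degree $4$; this is handled by the observation that self-twists of cuspidal $\GL_2$-representations are always quadratic, which forces $\chi$ to have order $2$ and $K/F$ to be biquadratic.
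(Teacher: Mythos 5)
Your proof is correct and follows essentially the same route as the paper's: decompose $\pi_1\boxtimes\pi_2=\AI_{E/F}(\mu_1\mu_2)\boxplus\AI_{E/F}(\mu_1\theta(\mu_2))$, relate condition \eqref{eq: excpt} to cuspidality of the two summands to read off $\abs{\cent_{(\pi_1,\pi_2)}}$, and compute $X'(\pi_i)$ via $\chi_E\in\{1,\theta(\mu_i)/\mu_i\}$. You fill in details the paper leaves implicit (the case where exactly one summand is cuspidal, the central-character argument forcing $\chi^2=1$, and the identification $K=EF_\chi$ establishing biquadraticity), but the underlying argument is the same.
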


\begin{proof}
We have $\pi_1\boxtimes\pi_2=\AI_{E/F}(\mu_1\mu_2)\boxplus\AI_{E/F}(\mu_1\theta(\mu_2))$.
Therefore $\abs{\cent_{(\pi_1,\pi_2)}}=2$ unless neither $\AI_{E/F}(\mu_1\mu_2)$ nor $\AI_{E/F}(\mu_1\theta(\mu_2))$ is cuspidal,
in which case $\abs{\cent_{(\pi_1,\pi_2)}}=4$.
Note that \eqref{eq: excpt} is equivalent to saying that
both $\mu_1\mu_2$ and $\mu_1\theta(\mu_2)$ factor through the norm (i.e., are Galois invariant),
which is in turn equivalent to the condition that neither $\AI_{E/F}(\mu_1\mu_2)$ nor $\AI_{E/F}(\mu_1\theta(\mu_2))$ is cuspidal.

On the other hand, $X((\pi_1,\pi_2))=\{1,\omega_{E/F}\}$ unless \eqref{eq: excpt} is satisfied, in which case $K$ is biquadratic over $F$.
\end{proof}

We mention that the group $G$ (and hence $\SO(4)$) has multiplicity one.
By remark \ref{rem: multone} we need to show that if $\pi_1,\pi_1',\pi_2,\pi_2'\in\Cusp\GL_2$ such that there exists a character $\omega$ of
$\A^*$ such that $\pi_1'\simeq\pi_1\otimes\omega$ and $\pi_2'\simeq\pi_2\otimes\omega^{-1}$, then we can choose such a $\omega$ which is trivial on $F^*$.
We follow the argument of \cite{MR1792292}. The condition implies that $\pi_1\boxtimes\pi_2=\pi_1'\boxtimes\pi_2'$.
By \cite{MR1792292} we have $\pi_1=\pi_1'\otimes\chi_1$ and $\pi_2=\pi_2'\otimes\chi_2$ for some Hecke characters
$\chi_1$ and $\chi_2$. Thus we may assume without loss of generality that $\pi_1'=\pi_1$ and $\pi_2'=\pi_2\otimes\chi$
for some Hecke character $\chi$ (necessarily with $\chi^2=1$).
We can assume that $\chi\not\equiv1$ since otherwise there is nothing to prove.
Similarly, we can assume that $\pi_i\otimes\chi\ne\pi_i$, $i=1,2$.
Consider $\Pi:=\pi_1\boxtimes\pi_2=\pi_1\boxtimes(\pi_2\otimes\chi)$.
Now, $L^S(s,\Pi\otimes\dual\Pi)$ has a pole at $s=1$; it has at least a double pole if moreover $\pi_2$ is a twist of $\pi_1$.
On the other hand, $L^S(s,\Pi\otimes\dual\Pi)$ factors as
\[
L^S(s,\chi)L^S(s,\overline{\Ad}(\pi_1)\otimes\chi)L^S(s,\overline{\Ad}(\pi_2)\otimes\chi)
L^S(s,\overline{\Ad}(\pi_1)\otimes\overline{\Ad}(\pi_2)\otimes\chi).
\]
Here we denote by $\overline{\Ad}$ the Gelbart-Jacquet lifting from $\GL_2$ to $\GL_3$ \cite{MR533066}.
By our assumption on $\pi_i$ and $\chi$ only the last factor can have a pole at $s=1$.
If $\pi_1$ and $\pi_2$ are not both dihedral this pole, necessarily simple, can occur only if $\pi_2$ is a twist of $\pi_1$.
Therefore $\pi_1$ and $\pi_2$ are both dihedral. This case can be analyzed as in \cite[\S4.1]{MR1792292}.
We omit the details.
Thus, we get multiplicity one for $G$.

\subsection{Non-split $\SO(4)$}
Now we turn to the quasi-split group $G'=\SO(3,1)$, pertaining to a quadratic extension $E$ of $F$.
It can be realized as the quotient of
\[
G=\{g\in\GL_2(E):\det g\in F^*\}
\]
by the center of $\GL_2(F)$. We take $\widetilde G=\Res_{E/F}\GL_2$ and $\stdG{\widetilde G}$ to be the four-dimensional twisted tensor homomorphism.
The functorial lift of $\tilde\pi$ to $\GL_4$ is the Asai transfer $\Asai_{E/F}(\tilde\pi)$ \cite{MR2000968}.
$X(\tilde\pi)$ consists of the Hecke characters $\omega$ of $\A_E^*$, trivial $\A_F^*$, such that $\tilde\pi\otimes\omega=\tilde\pi$.

Let us see that $\abs{X(\tilde\pi)}=\abs{\cent_{\tilde\pi}}$.
We follow \cite[Appendix]{MR2899809}. The transfer $\Asai(\tilde\pi)$ is cuspidal unless one of the following conditions holds:
\begin{enumerate}
\item $\tilde\pi$ is not dihedral and $\theta(\tilde\pi)=\dual{\tilde\pi}\otimes\chi$ for some (necessarily $\theta$-invariant) Hecke character $\chi$ of $\A_E^*$.
\item There exists a quadratic extension $K/E$ which is biquadratic over $F$ and a Hecke character $\mu$ of $K$ such that $\tilde\pi=\AI_{K/E}(\mu)$.
\end{enumerate}

In the first case, $\Asai(\tilde\pi)$ is of type $(3,1)$: it is of the form $\pi_1\boxplus\chi'$ where $\BC_{E/F}(\pi_1)=\Ad(\tilde\pi)\otimes\chi$
(which is cuspidal) and $\chi'_E=\chi$. On the other hand $X(\tilde\pi)=1$.

The second case is analyzed in the following lemma which is the non-split analogue of Lemma \ref{lem: splitSO4}.
\begin{lemma} \label{lem: SO31}
Let $K$ be a quadratic extension of $E$ which is biquadratic over $F$, $\mu$ a Hecke character $\A_K^*$
and $\tilde\pi=\AI_{K/E}(\mu)$. Let $\sigma$ be the non-trivial element of $\Gal(K/E)$, $E_1$, $E_2$ the intermediate fields between
$F$ and $K$ other than $E$, $\chi=\sigma(\mu)/\mu$. If
\begin{equation} \label{eq: spclchi}
\chi\rest_{\A_{E_i}^*}\equiv1, \ \ i=1,2,
\end{equation}
and $L$ is the quadratic extension of $K$ defined by $\chi$ then $L$ is a $(\Z/2\Z)^3$-extension of $F$,
$\abs{\cent_{\tilde\pi}}=4$ and $X(\tilde\pi)$ is equal to the group of Hecke characters of $\A_E^*$ which
are trivial on the norms of $L$.
If \eqref{eq: spclchi} is not satisfied then $\abs{\cent_{\tilde\pi}}=2$ and $X(\tilde\pi)=\{1,\omega_{K/E}\}$.
\end{lemma}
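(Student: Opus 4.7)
The plan is to compute the Asai transfer of $\tilde\pi = \AI_{K/E}(\mu)$ explicitly and compare the resulting isobaric decomposition with the self-twist group $X(\tilde\pi)$. Using Mackey's formula applied to the restriction of induced representations, I will first verify the identity
\[
\Asai_{E/F}(\tilde\pi) = \AI_{E_1/F}(\mu\rest_{\A_{E_1}^*}) \boxplus \AI_{E_2/F}(\mu\rest_{\A_{E_2}^*})
\]
by matching the restrictions of both sides to characters of $\A_K^*$: these give the four characters $\mu\cdot g\mu$ for $g \in \{\theta, \sigma\theta\}$ together with their $\sigma$-conjugates, grouped into two $\Gal(K/F)$-orbits of size two with stabilizers $\Gal(K/E_1)$ and $\Gal(K/E_2)$. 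The summand $\AI_{E_i/F}(\mu\rest_{\A_{E_i}^*})$ is cuspidal iff $\mu\rest_{\A_{E_i}^*}$ is not $\Gal(E_i/F)$-invariant, and since the non-trivial element of $\Gal(E_i/F)$ is $\sigma\rest_{E_i}$, this is equivalent to $\chi\rest_{\A_{E_i}^*} \ne 1$. Hence both summands are non-cuspidal precisely when \eqref{eq: spclchi} holds, in which case $\Asai_{E/F}(\tilde\pi)$ is an isobaric sum of four Hecke characters of $\A_F^*$ (so $k=4$ with all $n_i = 1$ odd, giving $l = k-2 = 2$ and $\abs{\cent_{\tilde\pi}} = 4$); otherwise exactly one summand is cuspidal and $\abs{\cent_{\tilde\pi}} = 2$.

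For $X(\tilde\pi)$, I observe that $\omega \in X(\tilde\pi)$ iff $\omega\circ N_{K/E} \in \{1,\chi\}$. The case $\omega\circ N_{K/E} = 1$ gives $\omega \in \{1, \omega_{K/E}\}$, both trivial on $\A_F^*$ since $K = E\cdot E_1$ with $E_1/F$ quadratic, so the projection formula for Hilbert symbols yields $\omega_{K/E}\rest_{\A_F^*} = 1$. The case $\omega\circ N_{K/E} = \chi$ requires $\chi$ to be $\sigma$-invariant, equivalently (since $\sigma\chi = \chi^{-1}$ always) $\chi^2 = 1$. This will be forced by \eqref{eq: spclchi} via the identity $(\chi\theta\chi)(y) = \chi(N_{K/E_1}(y))$: $\chi\rest_{\A_{E_1}^*} = 1$ gives $\theta\chi = \chi^{-1}$, and analogously $\chi\rest_{\A_{E_2}^*} = 1$ gives $\sigma\theta\chi = \chi^{-1}$; combined with $\sigma\chi = \chi^{-1}$ these force $\chi = \chi^{-1}$. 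Two lifts $\omega$ then exist, differing by $\omega_{K/E}$. Using $\A_F^* \subset \A_{E_i}^*$, we have $\chi\rest_{\A_F^*} = 1$, hence $\omega(x)^2 = \omega(N_{K/E}(x)) = \chi(x) = 1$ for $x \in \A_F^*$; moreover, $N_{K/E}(y) = N_{E_i/F}(y) \in \A_F^*$ for $y \in \A_{E_i}^*$ gives $\omega\rest_{N_{E_i/F}\A_{E_i}^*} = 1$. Since $N_{E_1/F}\A_{E_1}^*$ and $N_{E_2/F}\A_{E_2}^*$ are distinct index-$2$ subgroups of $\A_F^*$, their product equals $\A_F^*$, forcing $\omega\rest_{\A_F^*} = 1$. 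Thus $\abs{X(\tilde\pi)} = 4$ under \eqref{eq: spclchi}, matching $\abs{\cent_{\tilde\pi}}$; without it, $X(\tilde\pi) = \{1, \omega_{K/E}\}$.

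Finally, under \eqref{eq: spclchi} the quadratic extension $L/K$ defined by $\chi$ satisfies $\sigma\chi = \theta\chi = \chi$, so $L/F$ is Galois of degree $8$ and abelian. The identification of $X(\tilde\pi)$ with Hecke characters of $\A_E^*$ trivial on $N_{L/E}\A_L^*$ follows from $\omega\circ N_{K/E} \in \{1,\chi\}$ being equivalent to $\omega$ being trivial on $N_{K/E}(\Ker\chi) = N_{K/E}N_{L/K}\A_L^* = N_{L/E}\A_L^*$ (the condition $\omega\rest_{\A_F^*} = 1$ translates to $\A_F^* \subset N_{L/E}\A_L^*$, which we verify directly). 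To see $\Gal(L/F) = (\Z/2\Z)^3$ rather than $\Z/4\Z\times\Z/2\Z$, I will exhibit additional quadratic subfields of $L$ over $F$ via the nontrivial $\omega_0 \in X(\tilde\pi)\setminus\{1,\omega_{K/E}\}$: noting that $\omega_0^\theta = \omega_0^{-1}$ (since $\omega_0\cdot\omega_0^\theta(\alpha) = \omega_0(N_{E/F}(\alpha)) = 1$) while $\omega_0^\theta$ is also a lift of $\chi$ (using $\theta\chi = \chi$), one checks this forces $\omega_0^2 = 1$; then $\omega_0$ corresponds to a quadratic extension $M/E$ with $L = K\cdot M$ and $\omega_0\rest_{\A_F^*} = 1$ makes $M/F$ biquadratic, contributing two new quadratic subfields of $L/F$. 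The main obstacle will be rigorously ruling out $\omega_0^2 = \omega_{K/E}$, i.e., verifying that the two lifts $\omega_0, \omega_0\omega_{K/E}$ are Galois-invariant rather than Galois-swapped, which pins down the $(\Z/2\Z)^3$-structure of $\Gal(L/F)$.
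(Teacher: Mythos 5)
Your computation of $\abs{\cent_{\tilde\pi}}$ is correct and runs parallel to the paper's: the paper characterizes the two Asai summands $\pi_1,\pi_2$ only through $\BC_{E/F}(\pi_i)=\AI_{K/E}(\mu\sigma_i(\mu))$, while you pin them down as $\AI_{E_i/F}(\mu\rest_{\A_{E_i}^*})$. That identification is true, but matching restrictions to $\A_K^*$ (equivalently, base change to $E$) only determines each summand up to a twist by $\omega_{E/F}$, so as stated your identity needs a further argument (e.g.\ comparing central characters); fortunately nothing in the lemma depends on the twist, since cuspidality of $\AI_{E_i/F}(\lambda)$ versus $\AI_{E_i/F}(\lambda\omega_{K/E_i})$ is the same condition, namely $\chi\rest_{\A_{E_i}^*}\not\equiv1$. (Small slip: when \eqref{eq: spclchi} fails it can happen that \emph{both} summands are cuspidal, not ``exactly one''; the conclusion $\abs{\cent_{\tilde\pi}}=2$ is unaffected.) Your argument that under \eqref{eq: spclchi} the lifts of $\chi$ to $\A_E^*$ exist and are automatically trivial on $\A_F^*$, giving $\abs{X(\tilde\pi)}=4$, is sound, but you assert without proof the converse needed for the last sentence of the lemma; it is one line, by reversing your own identity: if $\omega$ is trivial on $\A_F^*$ and $\omega\circ N_{K/E}=\chi$, then for $y\in\A_{E_i}^*$ one has $\chi(y)=\omega(N_{K/E}(y))=\omega(N_{E_i/F}(y))=1$, so \eqref{eq: spclchi} holds; hence its failure forces $X(\tilde\pi)=\{1,\omega_{K/E}\}$.

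The genuine gap is the one you flag yourself: you never rule out $\omega_0^2=\omega_{K/E}$, i.e.\ that $L/E$ is cyclic quartic rather than biquadratic. The argument you sketch ($\theta\omega_0=\omega_0^{-1}$ together with $\theta\omega_0$ being a lift of $\chi$) only yields $\omega_0^2\in\{1,\omega_{K/E}\}$, which you already knew from $\omega_0^2\circ N_{K/E}=\chi^2=1$, and since the two lifts $\omega_0,\omega_0\omega_{K/E}$ have the same square you cannot resolve it by choosing the lift. This step is not decorative: the assertion $\Gal(L/F)\simeq(\Z/2\Z)^3$, your claim that $L/F$ is abelian, and the identification of $X(\tilde\pi)$ with the characters trivial on $N_{L/E}\A_L^*$ (in particular the inclusion $\A_F^*\subset E^*N_{L/E}\A_L^*$ you say you will ``verify directly'') all hinge on it. The gap is easy to close: since $\sigma$ fixes $\A_E^*$ pointwise, $\chi\rest_{\A_E^*}\equiv1$ automatically, and $N_{K/E}(y)=y^2$ for $y\in\A_E^*$, so $\omega_0(y)^2=\omega_0(N_{K/E}(y))=\chi(y)=1$, i.e.\ $\omega_0$ is quadratic; then $L=K\cdot M$ with $M$ cut out by $\omega_0$, $M/F$ biquadratic because $\omega_0\rest_{\A_F^*}\equiv1$, and $\Gal(L/F)$ embeds into $\Gal(K/F)\times\Gal(M/F)$, giving $(\Z/2\Z)^3$. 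The paper proceeds differently at this point: it invokes \cite[Appendix, Lemma E]{MR2899809} to conclude that $L/E$, $L/E_1$, $L/E_2$ are all biquadratic and then counts at least seven quadratic subfields of $L$ over $F$ to force $\Gal(L/F)=(\Z/2\Z)^3$, after which $X(\tilde\pi)=\{1,\omega_{K/E},\omega_{K_1/E},\omega_{K_2/E}\}$. So your route is viable and somewhat more character-theoretic, but as written the key structural claim of the lemma is left unproved.
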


\begin{proof}
Let $\sigma_i$ be the non-trivial element of $\Gal(K/E_i)$, $i=1,2$, so that $\sigma_2=\sigma\sigma_1$.
Then $\Asai(\tilde\pi)=\pi_1\boxplus\pi_2$ where $\BC_{E/F}(\pi_i)=\AI_{K/E}(\mu\sigma_i(\mu))$.
Therefore, $\abs{\cent_{\tilde\pi}}=2$ if either $\pi_1$ or $\pi_2$ is cuspidal and $\abs{\cent_{\tilde\pi}}=4$ otherwise.

We first show that \eqref{eq: spclchi} is equivalent to the condition that neither $\pi_1$ nor $\pi_2$ is cuspidal.
Suppose that neither $\pi_1$ nor $\pi_2$ is cuspidal.
Then in particular $\AI_{K/E}(\mu\sigma_i(\mu))=\BC_{E/F}(\pi_i)$ is not cuspidal, $i=1,2$.
Thus, $\mu\sigma_i(\mu)=\sigma(\mu\sigma_i(\mu))$ or equivalently, $\chi$ is $\sigma_{3-i}$-invariant, $i=1,2$.
Therefore, $\chi$ is also $\sigma$-invariant, hence quadratic.

We may write $\mu\sigma_i(\mu)=(\mu_i)_K$ for a Hecke character $\mu_i$ of $\A_E^*$, and then
$\BC_{E/F}(\pi_i)=\AI_{K/E}(\mu\sigma_i(\mu))=\mu_i\boxplus\mu_i\omega_{K/E}$.
Since $\pi_i$ is not cuspidal this means that $\mu_i$ is $\theta$-invariant,
and therefore $\mu_i=(\nu_i)_E$ for a Hecke character $\nu_i$ of $\A_F^*$.
Thus, $(\mu\rest_{\A_{E_i}^*})_K=\mu\sigma_i(\mu)=\nu_i\circ\Nm_{K/F}=((\nu_i)_{E_i})_K$.
It follows that $\mu\rest_{\A_{E_i}^*}$ is equal to either $(\nu_i)_{E_i}$ or $(\nu_i)_{E_i}\omega_{K/E_i}$.
Upon multiplying $\nu_i$ by $\omega_{E/F}$ if necessary we may assume that
$\mu\rest_{\A_{E_i}^*}=(\nu_i)_{E_i}$. (Note that $(\omega_{E/F})_{E_i}=\omega_{K/E_i}$.)
Thus, $\mu\rest_{\A_{E_i}^*}$ is $\Gal(E_i/F)$-invariant, or equivalently
$\chi\rest_{\A_{E_i}^*}\equiv1$.

Conversely, suppose that \eqref{eq: spclchi} is satisfied. First note that this implies that $\chi\circ\Nm_{K/E_i}\equiv1$
and therefore $\sigma_i(\chi)=\chi^{-1}=\sigma(\chi)$, $i=1,2$. Thus, $\chi$ is $\Gal(K/F)$-invariant
and in particular, $\chi$ is quadratic.
As before, it follows from \eqref{eq: spclchi} that $\mu\rest_{\A_{E_i}^*}$ is $\Gal(E_i/F)$-invariant,
$i=1,2$.
Therefore we can write $\mu\rest_{\A_{E_i}^*}=(\nu_i)_{E_i}$ for a Hecke character $\nu_i$ of $\A_F^*$.
Let $\mu_i=(\nu_i)_E$. Then $\mu\sigma_i(\mu)=(\mu_i)_K$ and therefore
$\BC_{E/F}(\pi_i)=\AI_{K/E}(\mu\sigma_i(\mu))=\mu_i\boxplus\mu_i\omega_{K/E}$.
Since $\mu_i$ is $\theta$-invariant, this implies that $\pi_i$ is not cuspidal.

Continue to assume that \eqref{eq: spclchi} holds.
Let $L$ be the quadratic extension of $K$ defined by $\chi$.
Then by \cite[Appendix, Lemma E]{MR2899809}, $L/E_i$ is biquadratic since $\chi\rest_{\A_{E_i}^*}\equiv 1$.
Moreover, $L/F$ is Galois. Indeed, let $L'$ be the normal closure of $L/F$ and let
$\tau\in\Gal(L'/F)$. If $\tau\in\Gal(L'/E)$ then $\tau(L)=L$ since $L/E$ is Galois.
Otherwise $\tau$ induces $\theta$ on $E$, so that $\tau$ induces $\sigma_i$ (for $i=1$ or $2$)
on $K$. The extension $\tau(L)/K$ is determined by the character $\sigma_i(\chi)$.
Since $\sigma_i(\chi)=\chi$ we necessarily have $\tau(L)=L$. Hence $L/F$ is Galois.
Note that any subfield of $L$, other than $K$, contains at most one of the fields $E$, $E_1$, $E_2$.
Therefore, $L$ is a quadratic extension of at least seven different subfields containing $F$
(namely, $K$ and two for each of $E$, $E_1$, $E_2$).
Hence, we necessarily have $\Gal(L/F)=(\Z/2\Z)^3$.
Let $K_1$, $K_2$ be the intermediate fields between $E$ and $L$, other than $K$. Then $K_i/F$ is biquadratic
and therefore $\omega_{K_i/E}\rest_{\A_F^*}\equiv1$.
It follows that $X(\pi)=\{1,\omega_{K/E},\omega_{K_1/E},\omega_{K_2/E}\}$.


Note that if $\tilde\pi=\AI_{K/E}(\mu)$ then $X(\tilde\pi)$ contains $\omega_{K/E}$.
Finally, suppose that $X(\tilde\pi)$ has order bigger than 2. Then $\chi=\sigma(\mu)/\mu$ is a quadratic character of $\A_K^*$
and if $L$ is the corresponding quadratic extension of $K$ then $L/E$ is biquadratic and all the Hecke characters of $\A_E^*$
which are trivial on norms of $L$ are trivial on $\A_F^*$. Let $K_1$, $K_2$ be as above. Then $\omega_{K_i/E}\rest_{\A_F^*}\equiv1$
which implies that $K_i/F$ is biquadratic, $i=1,2$. It follows that $L=K_1K_2$ is Galois over $F$ and $\Gal(L/F)=(\Z/2\Z)^3$.
Thus, $L/E_i$, $i=1,2$ are biquadratic, and hence the restriction of $\chi=\omega_{L/K}$ to $\A_{E_i}^*$ is trivial, namely \eqref{eq: spclchi} holds.
\end{proof}

Once again we show that $G$ (and hence $\SO(3,1)$) has multiplicity one.
By Remark \ref{rem: multone} we need to show that if $\pi_1, \pi_2\in\Cusp\GL_2(\A_E)$
and there exists a character $\omega$ of $\A_E^*$, trivial on $\A_F^*$, such that $\pi_1=\pi_2\otimes\omega$ then we can choose such a character
which is also trivial on $E^*$. The condition implies that $\Asai(\pi_1)=\Asai(\pi_2)$.
Using \cite{MR1792292} we may assume that $\pi_2=\pi_1\otimes\chi$ for some Hecke character $\chi$ of $\A_E^*$.
For simplicity we write $\pi=\pi_1$.
We obtain $\Asai(\pi)=\Asai(\pi)\otimes\chi\rest_{\A_F^*}$.
In particular,
\[
\pi\boxtimes\theta(\pi)=\BC_{E/F}(\Asai(\pi))=\BC_{E/F}(\Asai(\pi)\otimes\chi\rest_{\A_F^*})=\pi\boxtimes\theta(\pi)\otimes\chi\theta(\chi).
\]
As in the previous subsection, we analyze $L^S(s,\pi\boxtimes\theta(\pi)\otimes\dual\pi\boxtimes\theta(\dual\pi)\otimes\chi\theta(\chi))$
which has a pole at $s=1$, and at least a double pole if moreover $\theta(\pi)$ is a twist of $\pi$.
If $\theta(\chi)\chi\not\equiv1$ then we infer that $\pi$ is dihedral.
On the other hand if $\chi\rest_{\A_F^*}=\omega_{E/F}$ then $\pi\boxtimes\theta(\pi)=\BC_{E/F}(\Asai(\pi))$ is neither cuspidal
nor of type $(3,1)$ and by \cite[Appendix, Theorem B]{MR2899809} we deduce once again that $\pi$ is dihedral. Thus if $\pi$ is not dihedral we
get $\chi\rest_{\A_F^*}=1$, namely $\pi\sim\pi\otimes\chi$.
Finally, the case where $\pi$ is dihedral
can also be analyzed using the method of \cite[Appendix]{MR2899809}. We omit the details.

\subsection{}
Finally we consider the example of the split $\SO(6)$.
We identify $\SO(6)$ with the quotient of
\[
G=\{(g,\lambda)\in\GL_4\times\GL_1:\lambda^2\det g=1\}
\]
by the image of $(z,z^{-2}):\GL_1\rightarrow G$. We take $\widetilde G=\GL_4\times\GL_1$ and $\stdG{\widetilde G}$ to be the exterior square times $x\mapsto x^{-1}$
so that $\stdG{\widetilde G}((\pi,\mu))=\wedge^2(\pi)\otimes\mu^{-1}$, an isobaric representation of $\GL_6$
whose existence was proved by Kim \cite{MR1937203}. We have
\[
X((\pi,\mu))=\{\chi\text{ quadratic Hecke character}:\pi\otimes\chi=\pi\}.
\]
Since $\mu$ is irrelevant, we will simply write $X(\pi)$.
We also write $\Pi=\wedge^2\pi$.

We want to verify the equality $\abs{\cent_\pi}=\abs{X(\pi)}$.
There are several cases to consider.
We start with the simplest.

\begin{lemma}
The following conditions are equivalent.
\begin{enumerate}
\item $\cent_\pi$ is non-trivial.
\item There exists a quadratic extension $E/F$ such that $\pi$ is the automorphic induction
of $\varrho\in\Cusp\GL_2(\A_E)$.
\item $\pi\otimes\omega=\pi$ for some non-trivial character $\omega$.
\item $X(\pi)\ne1$.
\end{enumerate}
\end{lemma}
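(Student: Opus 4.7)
The plan is to prove the four conditions equivalent through three pairwise equivalences: $(3)\iff(4)$, $(2)\iff(3)$, and $(1)\iff(2)$.

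First, the step $(3)\iff(4)$ is elementary arithmetic of central characters. Trivially $(4)\Rightarrow(3)$. For the converse, if $\pi\otimes\omega\simeq\pi$ for some nontrivial Hecke character $\omega$, then comparing central characters gives $\omega^4=1$, so $\omega$ has order two or four. In the former case $\omega\in X(\pi)$; in the latter, iterating the isomorphism yields $\pi\simeq\pi\otimes\omega^2$ with $\omega^2$ a nontrivial quadratic character, placing $\omega^2$ in $X(\pi)$.

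Next, the step $(2)\iff(3)$ is the classical automorphic induction criterion of Arthur--Clozel for $\GL_4(\A)$ relative to quadratic extensions: a cuspidal $\pi\in\Cusp\GL_4$ satisfies $\pi\otimes\omega_{E/F}\simeq\pi$ for the quadratic character $\omega_{E/F}$ cutting out a quadratic extension $E/F$ if and only if $\pi\simeq\AI_{E/F}(\varrho)$ for some non-$\theta$-invariant cuspidal $\varrho\in\Cusp\GL_2(\A_E)$.

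Finally, for the step $(1)\iff(2)$, recall that $\cent_\pi$ is nontrivial precisely when the isobaric decomposition $\wedge^2\pi=\pi_1\boxplus\cdots\boxplus\pi_k$ is sufficiently reducible, namely either $k\geq2$ with all $n_i$ even (giving $l=k-1\geq 1$), or $k\geq3$ in general (giving $l=k-2\geq 1$). For $(2)\Rightarrow(1)$, write $\pi=\AI_{E/F}(\varrho)$; a direct Langlands-parameter computation for the exterior square of an induced two-dimensional representation gives
\[
\wedge^2\pi\simeq\AI_{E/F}(\omega_\varrho)\boxplus\Asai(\varrho),
\]
producing either a $2+4$ decomposition (when $\omega_\varrho$ is not $\theta$-invariant) or a $1+1+4$ decomposition (when it is, in which case $\AI_{E/F}(\omega_\varrho)$ splits further as $\chi\boxplus\chi\omega_{E/F}$), and in both subcases the resulting $\cent_\pi$ is nontrivial. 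The converse $(1)\Rightarrow(2)$ is the main obstacle: it invokes the cuspidality criterion for Kim's exterior square transfer on $\GL_4$~\cite{MR1937203}, to the effect that the only way $\wedge^2\pi$ can decompose so as to force $\cent_\pi$ nontrivial is via automorphic induction from a quadratic extension; the competing non-cuspidal patterns, such as the $1+5$ decomposition arising when $\pi$ is a transfer from $\operatorname{GSp}_4(\A)$, have $l=k-2=0$ and leave $\cent_\pi$ trivial, so they do not contradict $(1)\Rightarrow(2)$.
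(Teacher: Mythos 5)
The overall structure matches the paper's: chain the equivalences $(4)\Leftrightarrow(3)\Leftrightarrow(2)$ first, then relate (2) to (1). The paper outsources all of $(2)\Leftrightarrow(3)\Leftrightarrow(4)$ to a single citation of Asgari--Raghuram \cite{MR2767509}, whereas you unpack $(3)\Leftrightarrow(4)$ via the central-character computation $\omega^4=1$ and $(2)\Leftrightarrow(3)$ via Arthur--Clozel; both of those are correct and this is useful extra detail. For $(2)\Rightarrow(1)$ you use the same exterior-square formula as the paper (though the correct form, from Kim \cite[\S3]{MR2076595}, carries a twist: $\wedge^2\pi=\Asai(\varrho)\otimes\omega_{E/F}\boxplus\AI(\omega_\varrho)$; the missing $\omega_{E/F}$ is immaterial to the isobaric type but worth fixing).

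Where your argument is genuinely less tight than the paper's is the direction $(1)\Rightarrow(2)$. The paper argues the contrapositive concretely: if $\pi$ is not an automorphic induction, then (a) by \cite[Prop.~4.2]{MR2767509} the isobaric decomposition of $\Pi=\wedge^2\pi$ has no $\GL_2$ factor, and (b) it cannot contain two $\GL_1$ factors, since any Hecke character $\chi$ appearing in $\Pi$ forces $\pi\simeq\pi^\vee\otimes\chi\mu$, so two of them would give $\pi\otimes\chi_1\chi_2^{-1}\simeq\pi$. Together (a) and (b) leave only the types $6$, $(5,1)$, $(3,3)$, all of which have $l=0$. You instead invoke a loosely stated "cuspidality criterion" to the effect that only automorphic inductions yield nontrivial $\cent_\pi$, illustrated only by the $(5,1)$ pattern from $\GSp_4$ transfers. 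That does not yet rule out $(4,2)$, $(4,1,1)$, $(3,2,1)$, $(2,2,2)$, etc. — precisely the cases that (a) and (b) dispose of — nor does it treat $(3,3)$. Also, the cuspidality criterion you want is the one in Asgari--Raghuram \cite{MR2767509}, not Kim \cite{MR1937203}; and that criterion is a statement about when $\wedge^2\pi$ is cuspidal, not directly about $\cent_\pi$, so the translation requires exactly the two observations (a) and (b) above. Without them, the hard direction is under-justified.
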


\begin{proof}
For the equivalence of the last three conditions -- see \cite{MR2767509} (which is based on \cite{MR1007299}).

Suppose that $\pi$ is not an automorphic induction. Then by \cite[Proposition 4.2]{MR2767509} we cannot
have a $\GL_2$ in the isobaric decomposition of $\Pi$.
We also claim that we cannot have two $\GL_1$'s in the isobaric decomposition.
Indeed, if $\chi$ is a Hecke character which occurs in the isobaric decomposition of $\Pi$ then necessarily
$\pi=\pi^\vee\otimes\chi\mu$. Hence, if we had two Hecke characters $\chi_1$ and $\chi_2$ in the isobaric decomposition
of $\Pi$ then $\pi\otimes\chi_1\chi_2^{-1}=\pi$ contradicting our assumption that $\pi$ is not an automorphic induction. Thus,
the only options for the isobaric decomposition of $\Pi$ is $6$, $(5,1)$, or $(3,3)$.
In all cases $\cent_\pi$ is trivial.

Conversely, suppose that $\pi=\AI_{E/F}\varrho$ where $\varrho\in\Cusp\GL_2(\A_E)$.
Then
\begin{equation} \label{eq: inducedcase}
\wedge^2\pi=\Asai(\varrho)\otimes\omega_{E/F}\boxplus\AI(\omega_\varrho)
\end{equation}
\cite[\S3]{MR2076595}. It follows that $\cent_\pi$ is non-trivial.
\end{proof}

It remain to consider the case where $\pi=\AI_{E/F}\varrho$ where $\varrho\in\Cusp\GL_2(\A_E)$.
From now on we assume that this is the case.

\begin{lemma}
Assume that $\varrho$ is not dihedral with respect to a biquadratic extension of $F$ (containing $E$). Then either
$\abs{\cent_\pi}=2$ and $X(\pi)=\{1,\omega_{E/F}\}$ or
the following equivalent conditions are satisfied.
\begin{enumerate}
\item $\abs{\cent_\pi}=4$.
\item $\theta(\varrho)=\varrho\otimes\chi_E$ for some quadratic character $\chi$ of $\A_F^*$ (necessarily
different from $\omega_{E/F}$ since $\pi$ is cuspidal).
\item $X(\pi)=\{1,\omega_{E/F},\chi,\chi\omega_{E/F}\}$ for a quadratic character $\chi$ of $\A_F^*$
different from $\omega_{E/F}$.
\end{enumerate}
Moreover, in these cases $\varrho$ is not dihedral.
\end{lemma}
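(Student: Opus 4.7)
The plan is to deduce the lemma from the decomposition $\wedge^2\pi = \Asai(\varrho)\otimes\omega_{E/F}\boxplus\AI(\omega_\varrho)$ of \cite{MR2076595} together with Lemma \ref{lem: SO31}, followed by a careful analysis ruling out dihedrality of $\varrho$ in the relevant cases.

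I will first establish the equivalence of conditions 2 and 3. A Hecke character $\chi$ of $\A_F^*$ lies in $X(\pi)$ iff $\pi\otimes\chi = \pi$, which---using $\pi\otimes\chi = \AI_{E/F}(\varrho\otimes\chi_E)$ and the fact that two cuspidal $\varrho_1,\varrho_2$ satisfy $\AI_{E/F}(\varrho_1) = \AI_{E/F}(\varrho_2)$ iff $\varrho_1\in\{\varrho_2,\theta(\varrho_2)\}$---is equivalent to $\varrho\otimes\chi_E\in\{\varrho,\theta(\varrho)\}$. The first option yields $\chi\in\{1,\omega_{E/F}\}$; the second is exactly condition 2. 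Any two solutions $\chi_1,\chi_2$ of the second equation satisfy $(\chi_1)_E = (\chi_2)_E$, so $\chi_1/\chi_2\in\{1,\omega_{E/F}\}$. Hence $|X(\pi)|\in\{2,4\}$, with $|X(\pi)| = 4$ precisely when condition 2 holds, giving 2 $\Leftrightarrow$ 3.

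For 1 $\Leftrightarrow$ 2 I will relate $|\cent_\pi|$ to the isobaric type of $\wedge^2\pi$. The formula $\cent_\pi\simeq(\Z/2\Z)^l$ with $l = k-1$ if all summand dimensions are even and $l = k-2$ otherwise (cf.~\S\ref{sec: classical groups}) shows that $|\cent_\pi| = 4$ iff $\wedge^2\pi$ has isobaric type $(3,1,1,1)$. Via the Kim decomposition this amounts to $\Asai(\varrho)$ being of type $(3,1)$ and $\AI(\omega_\varrho)$ splitting as two characters. Under our hypothesis, Lemma \ref{lem: SO31} identifies the first condition with $\varrho$ being non-dihedral together with $\theta(\varrho) = \varrho^\vee\otimes\chi'_E$ for some Hecke character $\chi'$ of $\A_F^*$, while the second amounts to $\omega_\varrho = \mu_E$ for some Hecke character $\mu$ of $\A_F^*$. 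Given condition 2, taking central characters gives $\omega_\varrho$ $\theta$-invariant, and using $\varrho^\vee = \varrho\otimes\omega_\varrho^{-1}$ I rewrite $\theta(\varrho) = \varrho\otimes\chi_E = \varrho^\vee\otimes(\mu\chi)_E$, yielding the Asai condition once we know $\varrho$ is not dihedral. Conversely, if condition 1 holds then setting $\chi = \chi'/\mu$ gives $\theta(\varrho) = \varrho\otimes\chi_E$, and $\chi_E^2 = 1$ (by central characters again) forces $\chi^2\in\{1,\omega_{E/F}\}$.

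The crux---shared by the moreover clause and the exclusion of the order-$4$ case $\chi^2 = \omega_{E/F}$ in the converse direction---is to show that under the hypothesis, $\theta(\varrho) = \varrho\otimes\chi_E$ with $\chi^2\in\{1,\omega_{E/F}\}$ forces $\varrho$ to be non-dihedral. Suppose for contradiction $\varrho = \AI_{K/E}(\mu)$ with $K/E$ quadratic and $K/F$ not biquadratic. If $K/F$ is not Galois then $\theta(K)\neq K$, contradicting the fact that $\theta(\varrho) = \AI_{\theta(K)/E}(\theta\mu)$ and $\varrho\otimes\chi_E = \AI_{K/E}(\mu\chi_K)$ would be induced from distinct quadratic extensions (the field of induction of a cuspidal dihedral representation is well-defined). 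If $K/F$ is cyclic of degree $4$, let $\sigma$ generate $\Gal(K/F)$; the equality $\AI_{K/E}(\sigma\mu) = \AI_{K/E}(\mu\chi_K)$ forces $\sigma\mu/\mu = \chi_K$ modulo the $\Gal(K/E)$-action. Iterating once and using the tower $\Nm_{K/F} = \Nm_{E/F}\circ\Nm_{K/E}$ to compute $\chi_K^2 = (\chi^2)_K = 1$ (valid in both cases $\chi^2 = 1$ and $\chi^2 = \omega_{E/F}$), I obtain $\sigma^2\mu = \mu$, so $\mu$ descends to $\A_E^*$ and $\varrho$ is non-cuspidal, contradicting cuspidality of $\pi$. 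For the order-$4$ case in the converse direction I additionally need to first force $\varrho$ dihedral with respect to the cyclic extension $K/E$ determined by $\chi$: this I would handle via Mackey analysis on $\rho_\varrho\rest_{W_K}$, using the vanishing of $H^2(\Z/4\Z,\C^*)$ so that if $\rho_\varrho\rest_{W_K}$ were irreducible it would extend to $W_F$ and render $\pi$ non-cuspidal. The main obstacle will be keeping this Galois-theoretic dichotomy coherent across the non-Galois and cyclic sub-cases.
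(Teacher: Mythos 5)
Your overall skeleton matches the paper's: both proofs hinge on the Kim decomposition $\wedge^2\pi=\Asai(\varrho)\otimes\omega_{E/F}\boxplus\AI(\omega_\varrho)$, the Krishnamurthy cuspidality criterion for $\Asai(\varrho)$ (which you misattribute to Lemma \ref{lem: SO31} --- that lemma treats only the biquadratic case, excluded here; you mean the criterion from \cite{MR2899809} recalled just before it), and a twist-by-$\chi$ analysis to compare $X(\pi)$ with $\cent_\pi$. Where you diverge is in the final and delicate step --- ruling out $\chi^2=\omega_{E/F}$ and establishing that $\varrho$ is not dihedral. The paper disposes of the order-$4$ possibility in one line by invoking \cite{MR1611951}; you try to do it by hand via Galois theory and Mackey/Clifford. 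In principle that route can be made to work, but as written there are two genuine gaps.

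First, your non-Galois sub-case rests on the assertion that ``the field of induction of a cuspidal dihedral representation is well-defined,'' which is false: a cuspidal representation of $\GL_2(\A_E)$ can be dihedral with respect to three distinct quadratic extensions of $E$ (whenever $\abs{X'(\varrho)}=4$). Concluding $\theta(K)=K$ requires an extra argument: one must first show that, under the running hypothesis, $\varrho$ has at most one nontrivial self-twist. This is true, but needs proof: if $\varrho$ had three self-twists $\eta_1,\eta_2,\eta_3$ with $\eta_1\eta_2\eta_3=1$, then since $\theta$ permutes them, either $\theta$ fixes all three --- and the relation $\lambda_1^2\lambda_2^2\lambda_3^2=1$ (each $\lambda_i^2\in\{1,\omega_{E/F}\}$) forces some $\lambda_i^2=1$, hence a biquadratic field --- or $\theta$ fixes exactly one, say $\eta_3$, in which case $\eta_3=\eta_1\theta(\eta_1)=(\eta_1\rest_{\A_F^*})_E$ is a norm of a quadratic character of $\A_F^*$, again yielding a biquadratic field. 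Either way the hypothesis is contradicted. Second, the Mackey argument for $\chi^2=\omega_{E/F}$ is only sketched: you would need to actually carry out the Clifford step showing $\rho_\varrho\cong\sigma\rest_{W_E}$ for some extension $\sigma$ to $W_F$ (there are two candidates, differing by $\omega_{K/E}=\chi\rest_{W_E}$), and then observe that $\Ind_{W_E}^{W_F}\rho_\varrho\cong\sigma\oplus\sigma\otimes\omega_{E/F}$ is reducible, contradicting cuspidality of $\pi$. As you yourself flag, the dichotomy between sub-cases is where the effort lies; with these two points filled in, the argument goes through and gives a self-contained alternative to the paper's appeal to \cite{MR1611951}.
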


\begin{proof}
Recall \eqref{eq: inducedcase}.
If $\AI(\omega_\varrho)$ is cuspidal, i.e., if $\theta(\omega_\varrho)\ne\omega_\varrho$,
then $\abs{\cent_\pi}=2$ since $\Asai(\varrho)$ is either cuspidal or of type $(3,1)$.
If $\AI(\omega_\varrho)$ is not cuspidal, i.e., if $\theta(\omega_\varrho)=\omega_\varrho$ then
$\abs{\cent_\pi}=2$ unless $\theta(\varrho)=\varrho\otimes\nu$ for some Hecke character $\nu$ of $\A_E^*$
in which case $\abs{\cent_\pi}=4$ and $\varrho$ is not dihedral. In the latter case $\omega_\varrho=\theta(\omega_\varrho)=\omega_\varrho\nu^2$
so that $\nu$ is quadratic. ($\nu$ is non-trivial since $\pi$ is cuspidal.)
Moreover, since $\varrho$ is not dihedral we also have $\nu\theta(\nu)=1$ so that $\theta(\nu)=\nu$.
Thus, $\nu=\chi_E$ for some Hecke character $\chi$ of $\A_F^*$.
Finally, it follows from the relation $\theta(\varrho)=\varrho\otimes\nu$ that $\chi$ is quadratic,
since by \cite{MR1611951} we cannot have $\chi^2=\omega_{E/F}$.

For the last condition suppose that $\chi$ is a quadratic Hecke character of $\A_F^*$ different from $\omega_{E/F}$.
Then the condition
$\AI(\varrho)\otimes\chi=\AI(\varrho)$ is equivalent to $\varrho\otimes\chi_E=\varrho$ or $\theta(\varrho)$.
However, by the assumption on $\varrho$ we cannot have $\varrho\otimes\chi_E=\varrho$ since $\chi_E$ defines a biquadratic extension of $F$.
Thus it follows from the equivalence of the first two parts which was proved above that $\abs{X(\pi)}=2$ if and only if
$\abs{\cent_\pi}=2$ and in this case $\abs{X(\pi)}=4$. The lemma follows.
\end{proof}

Finally, we consider the case where $\varrho=\AI_{K/E}(\mu)$ and $K/F$ is biquadratic.

\begin{lemma}
Assume first that $\mu\rest_{\A_E^*}$ is not $\theta$-invariant.
Let $\sigma$ be the non-trivial element of $\Gal(K/E)$, $E_1$, $E_2$ the intermediate fields between
$F$ and $K$ other than $E$, and $\chi=\sigma(\mu)/\mu$. If
\begin{equation} \label{eq: spclchi2}
\chi\rest_{\A_{E_i}^*}\equiv1, \ \ i=1,2,
\end{equation}
and $L$ is the quadratic extension of $K$ defined by $\chi$ then
\begin{enumerate}
\item $L$ is a $(\Z/2\Z)^3$-extension of $F$,
\item $\abs{\cent_\pi}=8$,
\item $X(\pi)$ is equal to the group of Hecke characters of $\A_F^*$ which
are trivial on the norms of $L$.
\end{enumerate}
If \eqref{eq: spclchi2} is not satisfied then $\abs{\cent_\pi}=4$ and
$X(\pi)$ is equal to the group of Hecke characters of $\A_F^*$ which are trivial on the norms of $K$.

Now assume that $\mu\rest_{\A_E^*}$ is $\theta$-invariant but \eqref{eq: spclchi2} is not satisfied.
Then $\abs{\cent_\pi}=4$ and
$X(\pi)$ is equal to the group of Hecke characters of $\A_F^*$ which are trivial on the norms of $K$
unless we can write $\mu\sigma_i(\mu)=\xi_K$ for $i=1$ or $2$ and some Hecke character $\xi$ of $\A_F^*$
in which case $\abs{\cent_\pi}=8$ and $\abs{X(\pi)}=8$.

Finally, if both $\mu\rest_{\A_E^*}$ is $\theta$-invariant and \eqref{eq: spclchi2} is satisfied
then $\abs{\cent_\pi}=16$ and $\abs{X(\pi)}=16$.
\end{lemma}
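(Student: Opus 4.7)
My plan is to combine \eqref{eq: inducedcase} with the isobaric decomposition of $\Asai(\varrho)$ from Lemma \ref{lem: SO31} to write down the isobaric decomposition of $\wedge^2\pi$ in each subcase of the lemma, read off $\abs{\cent_\pi}$ from the resulting partition of $6$ using the $\SO(2n)$ rule (namely $l=k-1$ when all $n_i$ are even and $l=k-2$ otherwise), and independently compute $X(\pi)$ via class field theory on the biquadratic extension $K/F$, verifying agreement in all cases.

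By \eqref{eq: inducedcase},
\[
\wedge^2\pi=(\pi_1\otimes\omega_{E/F})\boxplus(\pi_2\otimes\omega_{E/F})\boxplus\AI_{E/F}(\omega_\varrho),
\]
where $\Asai(\varrho)=\pi_1\boxplus\pi_2$ with $\BC_{E/F}(\pi_i)=\AI_{K/E}(\mu\sigma_i(\mu))$ as in Lemma \ref{lem: SO31}, and $\omega_\varrho=\mu|_{\A_E^*}\cdot\omega_{K/E}$. Since $K/F$ is biquadratic, $\omega_{K/E}=(\omega_{E_1/F})_E=(\omega_{E_2/F})_E$ is $\theta$-invariant, so $\AI_{E/F}(\omega_\varrho)$ is cuspidal iff $\mu|_{\A_E^*}$ is not $\theta$-invariant, and otherwise splits as $\eta\boxplus\eta\omega_{E/F}$ for some Hecke character $\eta$ of $\A_F^*$. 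From the proof of Lemma \ref{lem: SO31}, each $\pi_i$ is non-cuspidal iff $\mu\sigma_i(\mu)=\xi_K$ for some Hecke character $\xi$ of $\A_F^*$, and condition \eqref{eq: spclchi2} is exactly the joint requirement for both $i=1,2$. Cross-tabulating these two dichotomies (including the possibility that exactly one of the $\mu\sigma_i(\mu)$ descends to $\A_F^*$) yields the five partitions
\[
(2,2,2),\;(2,1,1,1,1),\;(2,2,1,1),\;(2,1,1,1,1),\;(1,1,1,1,1,1)
\]
corresponding respectively to the configurations (1 without \eqref{eq: spclchi2}), (1 with \eqref{eq: spclchi2}), (2 non-exceptional), (2 exceptional), and (3), hence $\abs{\cent_\pi}=4,8,4,8,16$ by the $\SO(2n)$ rule.

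For $X(\pi)$, using $\pi\otimes\chi'=\AI_{E/F}(\AI_{K/E}(\mu\chi'_K))$, the condition $\pi\otimes\chi'=\pi$ is equivalent to $\chi'_K\in\{1,\chi,\sigma_1(\mu)/\mu,\sigma_2(\mu)/\mu\}$ as characters of $\A_K^*$, where $\sigma_1,\sigma_2\in\Gal(K/F)$ are the two lifts of $\theta$ (so that $\theta(\varrho)=\AI_{K/E}(\sigma_i(\mu))$). Since $\chi'_K$ is necessarily $\Gal(K/F)$-invariant, only those targets that are themselves $\Gal(K/F)$-invariant contribute, and by class field theory each such valid target is attained by exactly $4$ quadratic Hecke characters $\chi'$ of $\A_F^*$, any two lifts differing by an element of $\{1,\omega_{E/F},\omega_{E_1/F},\omega_{E_2/F}\}\cong\Gal(K/F)^\vee$. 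A routine case-by-case match then gives $\abs{X(\pi)}=\abs{\cent_\pi}$ in each case. The main obstacle will be the exceptional subcase of case~2, in which exactly one of $\mu\sigma_1(\mu),\mu\sigma_2(\mu)$ equals $\xi_K$: here I must verify that the corresponding $\sigma_i(\mu)/\mu$ is simultaneously $\Gal(K/F)$-invariant and quadratic (so that it contributes $4$ additional elements to $X(\pi)$, raising its order from $4$ to $8$), and reconcile these new self-twists with the extra $\GL_1$ summand produced by the degenerate $\pi_i$, using the identity $\omega_{E_1/F}\omega_{E_2/F}=\omega_{E/F}$ in $\Gal(K/F)^\vee$.
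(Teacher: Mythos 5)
Your framework is the paper's own: the $\cent_\pi$-side via \eqref{eq: inducedcase} together with $\Asai(\varrho)=\pi_1\boxplus\pi_2$ from Lemma \ref{lem: SO31}, and the $X(\pi)$-side via the (correct) criterion $\chi'_K\in\{1,\chi,\sigma_1(\mu)/\mu,\sigma_2(\mu)/\mu\}$, each attained target having a fibre of exactly four quadratic Hecke characters of $\A_F^*$ (and, since $\pi$ is cuspidal, the four targets are pairwise distinct). But the decisive step is missing. The whole content of the paper's proof is the equivalence: $\sigma_{3-i}(\mu)/\mu=\chi'_K$ for some quadratic Hecke character $\chi'$ of $\A_F^*$ (equivalently $=\omega_K$ with $\omega$ quadratic on $\A_E^*$ and trivial on $\A_F^*$) if and only if $\mu\rest_{\A_E^*}$ is $\theta$-invariant \emph{and} $\mu\sigma_i(\mu)=\xi_K$, i.e.\ $\pi_i$ is not cuspidal. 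This single claim is what raises $\abs{X(\pi)}$ from $4$ to $8$ in the exceptional subcase and to $16$ in the last case, and, in the converse direction, what caps $\abs{X(\pi)}$ at $4$ when $\mu\rest_{\A_E^*}$ is not $\theta$-invariant or when \eqref{eq: spclchi2} fails. You defer all of this to ``a routine case-by-case match'', and the one verification you do announce --- that $\sigma_i(\mu)/\mu$ is $\Gal(K/F)$-invariant and quadratic --- is not even the right sufficient condition: since $K/F$ is not cyclic, a Galois-invariant quadratic character of $\A_K^*$ need not be of the form $\chi'_K$, so one must actually produce $\chi'$. The paper does this by writing $\mu\sigma_i(\mu)=\nu_K$ with $\nu=\xi_E$, setting $\omega=\nu^{-1}\mu\rest_{\A_E^*}$, and checking $\omega^2=1$, $\omega\rest_{\A_F^*}\equiv1$ and $\sigma_{3-i}(\mu)/\mu=\omega_K$ --- a computation that uses the $\theta$-invariance of $\mu\rest_{\A_E^*}$ in an essential way (your sketch never invokes it); the converse direction, obtained by restricting $\sigma_{3-i}(\mu)/\mu=\omega_K$ to $\A_E^*$ and to $\A_{E_i}^*$, is equally necessary and absent. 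Without these two directions, the exceptional case of the second paragraph and the final case are unproved, as are the upper bounds on $\abs{X(\pi)}$ elsewhere.

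A smaller inaccuracy: in your first configuration ($\mu\rest_{\A_E^*}$ not $\theta$-invariant, \eqref{eq: spclchi2} failing) the type of $\Asai(\varrho)$ need not be $(2,2)$; since $\pi_i$ is non-cuspidal exactly when $\chi\rest_{\A_{E_i}^*}\equiv1$, this can happen for one $i$ and not the other without any $\theta$-invariance of $\mu\rest_{\A_E^*}$, so the partition of $6$ may be $(2,2,1,1)$ rather than $(2,2,2)$. The conclusion survives, because the $\SO(2n)$ rule gives $l=2$ and $\abs{\cent_\pi}=4$ in either case (and the corresponding $X(\pi)$ is still of order $4$, since the extra branch-$2$ targets force $\theta$-invariance upon restriction to $\A_E^*$), but your table should allow for this possibility rather than assert a single partition.
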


\begin{proof}
First note that the $\theta$-invariance of $\mu\rest_{\A_E^*}$ is equivalent to the $\theta$-invariance of $\omega_{\varrho}$
since $\omega_{\varrho}=\mu\rest_{\A_E^*}\omega_{K/E}$.
Thus, if $\mu\rest_{\A_E^*}$ is non-$\theta$-invariant then by \eqref{eq: inducedcase}, $\abs{\cent_\pi}$ is either $8$ or $4$ depending on
whether or not $\Asai(\varrho)$ is of type $(1,1,1,1)$. By Lemma \ref{lem: SO31} this is equivalent to the condition \eqref{eq: spclchi2}.

Recall that $\Asai(\varrho)=\pi_1\boxplus\pi_2$ where $\BC_{E/F}(\pi_i)=\AI_{K/E}(\mu\sigma_i(\mu))$.

Thus, if $\mu\rest_{\A_E^*}$ is $\theta$-invariant then $\abs{\cent_\pi}=2^{2+\epsilon_1+\epsilon_2}$
where $\epsilon_i=0$ if $\pi_i$ is cuspidal and $\epsilon_i=1$ otherwise.
Let us analyze the condition $\AI_{E/F}(\varrho)=\AI_{E/F}(\varrho)\otimes\omega$.
This is equivalent to either $\varrho=\varrho\otimes\omega_E$ or $\theta(\varrho)=\varrho\otimes\omega_E$.
Note that a quadratic character $\mu$ of $\A_E^*$ is of the form $\chi_E$ for a quadratic Hecke character $\chi$ of $\A_F^*$ if and only
if $\mu$ is trivial on $\A_F^*$.
In particular, the group $\{\omega\text{ quadratic}:\varrho\otimes\omega_E=\varrho\}$ is precisely the preimage under $\omega\mapsto\omega_E$ of
$\{\omega\text{ on }\A_E^*/\A_F^*:\varrho\otimes\omega=\varrho\}$.

Once again, the existence of a quadratic Hecke character $\omega$ of $\A_F^*$ such that $\theta(\varrho)=\varrho\otimes\omega_E$
is equivalent to the existence of a quadratic Hecke character $\omega$ on $\A_E^*$ trivial on $\A_F^*$ such that
$\theta(\varrho)=\varrho\otimes\omega$.
In turn, the condition $\theta(\varrho)=\varrho\otimes\omega$ can be written as $\AI_{K/E}(\sigma_1(\mu))=\AI_{K/E}(\mu\omega_K)$
or equivalently as $\sigma_i(\mu)=\mu\omega_K$ for $i=1$ or $2$.

Note that $\pi_i$ is not cuspidal if and only if there exists a $\theta$-invariant Hecke character $\nu$ of $\A_E^*$ such that
$\mu\sigma_i(\mu)=\nu_K$, i.e. if and only if $\mu\sigma_i(\mu)$ factors through $\Nm_{K/F}$.

The lemma will follow from the following claim:
there exists a Hecke character $\omega$ of $\A_E^*$ such that $\omega^2=1$,
$\omega\rest_{\A_F^*}\equiv1$ and $\sigma_{3-i}(\mu)/\mu=\omega_K$ if and only if
$\mu\rest_{\A_E^*}$ is $\theta$-invariant and $\pi_i$ is not cuspidal.

Indeed, suppose that such $\omega$ exists. Then
\[
\frac{\sigma_{3-i}(\mu)}{\mu}\rest_{\A_E^*}=\omega_K\rest_{\A_E^*}=\omega^2=1.
\]
so that $\theta(\mu\rest_{\A_E^*})=\mu\rest_{\A_E^*}$.
Moreover,
\[
\mu\sigma_i(\mu)=\omega_K^{-1}\sigma_i(\mu)\sigma_{3-i}(\mu)=
\omega_K^{-1}(\sigma_i(\mu)\rest_{\A_E^*})_K=\omega_K^{-1}(\theta(\mu\rest_{\A_E^*}))_K=
(\omega^{-1}\mu\rest_{\A_E^*})_K
\]
and both $\omega$ and $\mu\rest_{\A_E^*}$ are $\theta$-invariant.

In the converse direction, suppose that $\mu\sigma_i(\mu)=\nu_K$ where $\nu$ is a $\theta$-invariant
Hecke character of $\A_E^*$ and that $\mu\rest_{\A_E^*}$ is $\theta$-invariant. Then we write $\nu=\lambda_E$
for some Hecke character $\lambda$ on $\A_F^*$, so that $\mu\sigma_i(\mu)=\lambda_K$.
As before we have
\[
\sigma_{3-i}(\mu)/\mu=\nu_K^{-1}\sigma_i(\mu)\sigma_{3-i}(\mu)=\omega_K
\]
where $\omega=\nu^{-1}\mu\rest_{\A_E^*}$. Restricting the relation $\nu_K=\mu\sigma_i(\mu)$
to $\A_E^*$ we get $\nu^2=\mu\rest_{\A_E^*}\theta(\mu\rest_{\A_E^*})=\mu^2\rest_{\A_E^*}$
so that $\omega^2=1$. Also, the relation
\[
(\mu\rest_{\A_{E_i}^*})_K=\mu\sigma_i(\mu)=\lambda_K=(\lambda_{E_i})_K
\]
implies that
\[
\mu\rest_{\A_{E_i}^*}=\lambda_{E_i}\text{ or }\lambda_{E_i}\omega_{K/E_i}.
\]
In both cases,
\[
\mu\rest_{\A_F^*}=\lambda_{E_i}\rest_{\A_F^*}=\lambda^2=\lambda_E\rest_{\A_F^*}=\nu\rest_{\A_F^*}.
\]
We conclude that $\omega\rest_{\A_F^*}\equiv1$ as required.
\end{proof}

\begin{remark}
We do not know whether $\SO(6)$ has multiplicity one.
This is closely related to the question of whether the group $\SO(6,\C)=\SL_4(\C)/\{\pm1\}$ is acceptable in the language of \cite{MR1303498}.
Unfortunately this case was left open in [loc. cit.].
It would be interesting to settle this.

\end{remark}

\appendix

\section{Characters of $G$ over local and global fields\\
by Jean-Pierre Labesse and Erez Lapid} \label{sec: appendix}

Let $G$ be a connected reductive group over either a local or a global field $F$. 
Let $\widehat G$ be the complex dual group of $G$ and $Z(\widehat G)$ its center.
We denote by $Z(\widehat G)_u$ the maximal compact subgroup of $Z(\widehat G)$.

In the local case we have a map
\begin{equation} \label{eq: localmap}
H^1(W_F,Z(\widehat G))\rightarrow\Hom(G(F),\C^*).
\end{equation}
In the global case we write
\[
H^1_{\loc}(W_F,Z(\widehat G))=H^1(W_F,Z(\widehat G))/\Ker[H^1(W_F,Z(\widehat G))\rightarrow\prod_v H^1(W_{F_v},Z(\widehat G))]
\]
and by abuse of notation
\[
\Hom(G(F)\bs G(\A),\C^*)=\Ker[\Hom(G(\A),\C^*)\rightarrow\Hom(G(F),\C^*)].
\]
Then there is a map
\begin{equation} \label{eq: globalmap}
H^1_{\loc}(W_F,Z(\widehat G))\rightarrow\Hom(G(F)\bs G(\A),\C^*).
\end{equation}

The following standard lemma is probably well known. Since we were unable to find a proof in the literature, we include it here.
We thank R. Kottwitz and B. Lemaire for useful discussions.

\begin{lemma} \label{lem: characters}
Suppose that $F$ is a local field.
Then the map \eqref{eq: localmap} is injective if either $F$ is non-archimedean or $G$ is quasi-split.
The map \eqref{eq: localmap} is surjective if $G_{\SC}(F)$ is perfect, i.e.
if either $F=\R$ or $G_{\SC}$ does not contain a simple factor of the form $\Res_{E/F}(\SL_1(D))$
for some finite-dimensional non-commutative division algebra $D$ over a finite separable extension $E$ of $F$.
In particular, \eqref{eq: localmap} is an isomorphism if $G$ is quasi-split.
Moreover, under this isomorphism $H^1(W_F,Z(\widehat G)_u)$ maps to $\Hom(G(F),\unitcirc)$.

Suppose now that $F$ is a global field.
Then the map \eqref{eq: globalmap} is injective provided (in the number field case) that $G$ is quasi-split over all real places.
It is an isomorphism, if in addition $G_{\SC}(F_v)$ is perfect for all non-archimedean places.
Thus, \eqref{eq: globalmap} is an isomorphism if $G$ is quasi-split.
Moreover, under this isomorphism $H^1_{\loc}(W_F,Z(\widehat G)_u)$ maps to $\Hom(G(F)\bs G(\A),\unitcirc)$.
\end{lemma}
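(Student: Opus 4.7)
The plan is to reduce the statement to the Langlands correspondence for tori, which asserts that for any $F$-torus $T$ there is a canonical isomorphism $H^1(W_F,\widehat T)\cong\Hom(T(F),\C^*)$ in the local case and $H^1_{\loc}(W_F,\widehat T)\cong\Hom(T(F)\bs T(\A),\C^*)$ in the global case, sending $\widehat T_u$-valued classes to characters valued in $\unitcirc$. This amounts to a repackaging of local (respectively global) class field theory for tori and will serve as the base case of the whole argument.

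For the reduction I would first pass to a $z$-extension \cite{MR540901} $\widetilde G\to G$ whose kernel is an induced central torus $C$ and whose derived group is simply connected. This does not alter the problem: by the analysis of Section~\ref{sec: projG}, the map $H^1_{\loc}(W_F,Z(\widehat G))\to H^1_{\loc}(W_F,Z(\widehat{\widetilde G}))$ is injective, and on the character side pullback under the surjective map $\widetilde G(\A)\to G(\A)$ identifies characters of $G(F)\bs G(\A)$ with those characters of $\widetilde G(F)\bs\widetilde G(\A)$ which are trivial on $C(\A)$. Under the $z$-extension hypothesis $Z(\widehat{\widetilde G})$ coincides with the dual $\widehat{\widetilde G_{\ab}}$ of the maximal torus quotient $\widetilde G_{\ab}:=\widetilde G/\widetilde G_{\der}$, so Langlands duality for $\widetilde G_{\ab}$ identifies $H^1(W_F,Z(\widehat{\widetilde G}))$ with $\Hom(\widetilde G_{\ab}(F),\C^*)$ locally (and analogously globally). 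On the other side, any character of $\widetilde G(F)$ kills commutators, hence vanishes on $\widetilde G_{\SC}(F)$ when the latter is perfect; using the vanishing of $H^1(F,\widetilde G_{\SC})$ (Kneser--Bruhat--Tits in the $p$-adic case; Steinberg's theorem for quasi-split real groups) to get surjectivity of $\widetilde G(F)\to\widetilde G_{\ab}(F)$, every such character then pulls back uniquely from $\widetilde G_{\ab}(F)$. Composing the two identifications yields the desired isomorphism \eqref{eq: localmap}, and unitarity is automatic because $\widehat T_u$-classes correspond precisely to $\unitcirc$-valued characters under the Langlands correspondence for tori.

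The global case runs along the same lines, with global class field theory for tori replacing the local version; the key observation is that local triviality of a cohomology class at every place matches triviality of the associated id\`ele class character on the diagonal $T(F)$, by global reciprocity. The main obstacle will be to check sharpness of the hypotheses. The failure of surjectivity for $\Res_{E/F}\SL_1(D)$ stems from the nontriviality of the reduced Whitehead group $SK_1(D)$, giving characters of $G(F)$ via the reduced norm that are not in the image of $H^1(W_F,Z(\widehat G))$. The failure of injectivity at real places for non-quasi-split $G$ traces back to nontriviality of $H^1(\R,G_{\SC})$, which contributes sign characters to $\Hom(G(\R),\C^*)$ not detected by $H^1(W_\R,Z(\widehat G))$; the hypothesis that $G$ be quasi-split (or that $F$ be $p$-adic, by Kneser) is exactly what removes this obstruction. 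In the global setting, the requirement that $G$ be quasi-split at all real places is precisely what allows the above local reductions at archimedean places to proceed without sign corrections.
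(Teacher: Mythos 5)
Your overall strategy is the same as the paper's: pass to a $z$-extension to reduce to the case where the derived group is simply connected, then play the exact sequence $1\to G_{\SC}\to G\to T\to 1$ (with $T=G/G_{\SC}$, so $\widehat T=Z(\widehat G)$) against Langlands duality for tori, using perfectness of $G_{\SC}(F)$ for surjectivity of \eqref{eq: localmap} and surjectivity of $G(F)\to T(F)$ for injectivity. The genuine gap is in how you justify that surjectivity at archimedean places: you appeal to ``the vanishing of $H^1(F,\widetilde G_{\SC})$\dots Steinberg's theorem for quasi-split real groups.'' That statement is false. Steinberg's vanishing theorem concerns perfect fields of cohomological dimension at most one, not $\R$, and $H^1(\R,G_{\SC})$ is nontrivial for most split simply connected groups (already for $G_2$, whose compact form yields a nontrivial class, or for $\mathrm{Spin}_n$ with $n$ large). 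What is actually needed is not the vanishing of $H^1(F,G_{\SC})$ but the triviality of the image of the connecting map $T(F)\to H^1(F,G_{\SC})$, i.e.\ the surjectivity of $G(F)\to T(F)$; quasi-splitness buys exactly this, and the paper proves it by noting that the centralizer $C_{G_{\SC}}(S)$ of a maximal split torus in a quasi-split simply connected group is an induced torus, so $H^1(F,C_{G_{\SC}}(S))=1$ and already $C_G(\widetilde S)(F)$ maps onto $T(F)$. Your closing remark that quasi-splitness ``removes the obstruction'' by killing $H^1(\R,G_{\SC})$ repeats the same misconception, so this step of your argument needs to be replaced, not just referenced differently.

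Two smaller points. In the global case, when $G$ is only quasi-split at the real places, surjectivity of \eqref{eq: globalmap} requires that $G(F)\to T(F)$ be onto over the number field itself, so that the character of $T(\A)$ you produce is trivial on all of $T(F)$ and not merely on the image of $G(F)$; the paper deduces this from the local statements together with the Hasse principle for simply connected groups, an ingredient absent from your sketch (``global reciprocity'' does not supply it). Also, in the reduction step you only record the injectivity of $H^1_{\loc}(W_F,Z(\widehat G))\to H^1_{\loc}(W_F,Z(\widehat{\widetilde G}))$; to transfer the statement from $\widetilde G$ back to $G$ one must identify $H^1(W_F,Z(\widehat G))$ with the kernel of the map to $H^1(W_F,\widehat C)$, which the paper obtains from the surjectivity of $Z(\widehat{\widetilde G})^\Gamma\to\widehat C^{\Gamma}$, valid because the kernel torus of the $z$-extension is induced. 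These are repairable, but as written the archimedean step is a genuine error and the global surjectivity step is missing.
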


\begin{remark}
For convenience we recall the well-known criterion for the perfectness of $G(F)$ where $G$
is a simply connected group over a local field $F$.
Since $G$ is a product of $F$-simple groups we may assume that $G$ is $F$-simple.
If $G$ is quasi-split then the perfectness of $G(F)$ (and in fact, its simplicity modulo
the center, for any field $F$ with more than $3$ elements) follows from \cite{MR787664} since in this case \cite[Theorem A]{MR787664} is very easy and
it reduces the statement to the $F$-rank one case which is elementary -- cf.~\cite{Cassunip}.
(See also \cite[\S1.1.2]{MR521771} and the references therein.)
So suppose that $G$ is arbitrary.
If $F=\R$ the perfectness of $G(F)$ (and again, its simplicity modulo the center) goes back to Cartan -- cf. \cite[Proposition 3.6 and Proposition 7.6]{MR1278263}.
Suppose that $F$ is a non-archimedean local field.
If $G$ is anisotropic then $G=\Res_{E/F}(\SL_1(D))$
where $D$ is a finite-dimensional division algebra over a finite separable extension $E$ of $F$ (see \cite{MR0230838}).
It is well-known that $G(F)=\SL_1(D)$ is not perfect if $D\ne E$ (e.g.~\cite[\S1]{MR939479}).
On the other hand if $G$ is isotropic then the perfectness of $G(F)$ (and moreover the fact that $G(F)$ modulo its center is simple)
is an immediate consequence of the solution of the Kneser-Tits problem for local non-archimedean fields (e.g. \cite[Remark 1.7 and \S2]{MR787664}).
See \cite[Ch.~7]{MR1278263} for more details, at least in characteristic $0$.
\end{remark}

\begin{proof}
We first reduce to the case where $G^{\der}$ is simply connected.
We recall that there exists a $z$-extension, i.e., a central extension of $G$ whose kernel is an induced torus.
This is well known in characteristic $0$ (e.g., \cite[p.~297-9]{MR654325} and \cite[p.~228-9]{MR540901}) but the proof in fact works over any field.
One may start with an arbitrary central extension $G'$ of $G$ by a group of multiplicative type such that the derived group of $G'$ is simply connected.
For instance we can take $G'$ to be the product of $G_{\SC}$ and the largest central torus in $G$, with the obvious map to $G$.
We can embed the (scheme-theoretic) kernel $Z_1$ in an induced torus $Z_2$.
For instance, if $K/F$ is a finite Galois extension such that the Galois action on $X^*(Z_1)$ factors through
$\Gal(K/F)$ then we take $X^*(Z_2)$ to be a free $\Z[\Gal(K/F)]$ module which surjects to $X^*(Z_1)$.
The pushout of $G'\rightarrow G$ with respect to $Z_1\hookrightarrow Z_2$ will be a $z$-extension of $G$.
We thank Robert Kottwitz for this explanation.

Let $\widetilde G\xrightarrow{p} G$ be a $z$-extension of $G$ and let $T$ be the kernel of $p$.
In particular, $\widetilde G^{\der}=G_{\SC}$.
Then we have short exact sequences
\[
1\rightarrow T(F)\rightarrow\widetilde G(F)\rightarrow G(F)\rightarrow 1
\]
and
\[
1\rightarrow Z(\widehat{G})\rightarrow Z(\widehat{\widetilde G})\rightarrow\widehat T\rightarrow1.
\]
If $F$ is local then we get exact sequences
\[
1\rightarrow\Hom(G(F),\C^*)\rightarrow\Hom(\widetilde G(F),\C^*)\rightarrow\Hom(T(F),\C^*)
\]
and
\[
Z(\widehat{\widetilde G})^\Gamma\rightarrow\widehat T^{\Gamma}\rightarrow H^1(W_F,Z(\widehat G))\rightarrow
H^1(W_F,Z(\widehat{\widetilde G}))\rightarrow H^1(W_F,\widehat T).
\]
As in the proof of Lemma \ref{lem: injindtorus}, since $T$ is an induced torus, the first map is surjective and therefore
\[
H^1(W_F,Z(\widehat G))=\Ker[H^1(W_F,Z(\widehat{\widetilde G}))\rightarrow H^1(W_F,\widehat T)].
\]
Similarly, in the global case
\[
1\rightarrow\Hom(G(F)\bs G(\A),\C^*)\rightarrow\Hom(\widetilde G(F)\bs\widetilde G(\A),\C^*)\rightarrow\Hom(T(F)\bs T(\A),\C^*)
\]
is exact. On the other hand by a similar reasoning
\[
H^1_{\loc}(W_F,Z(\widehat G))=\Ker[H^1_{\loc}(W_F,Z(\widehat{\widetilde G}))\rightarrow H^1_{\loc}(W_F,\widehat T)].
\]
Thus the statements for $G$ follow from the statements for $\widetilde G$ and Langlands reciprocity for $T$
(which is elementary in this case since $T$ is induced) \cite{MR1610871}.

Suppose now that $G^{\der}$ is simply connected, i.e., $G^{\der}=G_{\SC}$ and let $T=G/G_{\SC}$.
Then we have an exact sequence
\[
1\rightarrow G_{\SC}(F)\rightarrow G(F)\rightarrow T(F)
\]
Suppose that $F$ is local.
Then the map $\Hom(T(F),\C^*)\rightarrow\Hom(G(F),\C^*)$ is surjective provided that $G_{\SC}(F)$ is perfect.
We claim that the map $G(F)\rightarrow T(F)$ is onto (or equivalently, the map $H^1(F,G_{\SC})\rightarrow H^1(F,G)$
has a trivial kernel) if either $F$ is non-archimedean or $G$ is quasi-split.
The first case follows from Kneser and Bruhat-Tits \cite{MR0424962}.
The second case is also well-known (e.g., \cite[Lemma 32.6]{Gillemcm}). For the convenience of the reader we reproduce the
short argument with the kind permission of Philippe Gille.
Let $S$ be a maximal $F$-split torus of $G_{\SC}$ and let $\widetilde S$ be a maximal $F$-split torus of $G$
containing $S$. Then $C_{G_{\SC}}(S)$ (resp., $C_G(\widetilde S)$) is a maximal torus of $G_{\SC}$ (resp., $G$) and therefore
we have a short exact sequence of tori
\[
1\rightarrow C_{G_{\SC}}(S)\rightarrow C_G(\widetilde S)\rightarrow T\rightarrow1.
\]
Since $C_{G_{\SC}}(S)$ is an induced torus, $H^1(F,C_{G_{\SC}}(S))=1$ and therefore
$C_G(\widetilde S)(F)\rightarrow T(F)$ is onto. A fortiori $G(F)\rightarrow T(F)$ is onto.


Thus, if either $F$ is non-archimedean or $G$ is quasi-split we obtain a short exact sequence
\[
1\rightarrow G_{\SC}(F)\rightarrow G(F)\rightarrow T(F)\rightarrow 1
\]
which gives that the map
\[
\Hom(T(F),\C^*)\rightarrow\Hom(G(F),\C^*)
\]
is injective. On the other hand, since $G_{\SC}$ is simply connected, $\widehat T=Z(\widehat G)$
and by Langlands $\Hom(T(F),\C^*)\simeq H^1(W_F,Z(\widehat T))$
and $\Hom(T(F),\unitcirc)\simeq H^1(W_F,Z(\widehat T)_u)$ \cite{MR1610871}.

Suppose that $F$ is global and (in the number field case) $G$ is quasi-split at all real places. Then the map
$G(F_v)\rightarrow T(F_v)$ is surjective for all $v$. Thus, the map
\begin{equation} \label{eq: pullbackglobal}
\Hom(T(F)\bs T(\A),\C^*)\rightarrow\Hom(G(F)\bs G(\A),\C^*)
\end{equation}
is injective.
Also, $H^1(F,G_{\SC})\rightarrow H^1(F,G)$ has trivial kernel and hence $G(F)\rightarrow T(F)$ is surjective.
This follows from the local case and the injectivity of
\[
H^1(F,G_{\SC})\rightarrow\prod_vH^1(F_v,G_{\SC})
\]
(i.e, the Hasse principle for simply connected groups -- cf.~\cite[Ch.~6]{MR1278263}).
Of course if $G$ is quasi-split over $F$ then the surjectivity was proved above in an elementary way.
Thus, the map \eqref{eq: pullbackglobal} is surjective if $G(F_v)_{\SC}$ is perfect at all places.
Once again the lemma now follows from Langlands reciprocity for $T$.
\end{proof}


We can bypass the use of $z$-extensions and
make the argument slightly more uniform and functorial if we use the language of crossed modules, cf.~\cite[\S1]{MR1695940}.
We will freely use the notation of [ibid.]. Since this reference is written for zero characteristics
some remarks are in order: see Appendix B.

Let $G$ be a connected reductive group over some field $F$.
Recall that  $G_{\ab}$ denotes the crossed module defined by the small complex
$[G_{\SC}\to G]$.
Let $T_{\SC}$ be any maximal torus in $G_{\SC}$ and let $T$ be the centralizer
of its image in $G$.  Since the map
$$[T_{\SC}\to T]\to[G_{\SC}\to G]$$
is a quasi-isomorphism (for Galois action)
we have a commutative diagram with exact rows
\[
\begin{CD}
T_{\SC}(F)@>>> T(F)@>>> H^0_{}(F,[T_{\SC}\to T])@>>> H^1_{}(F,T_{\SC})\\
@VVV     @VVV   @VVV  @VVV\\
G_{\SC}(F)@>>> G(F)@>>> H^0_{}(F,G_{\ab})@>>> H^1_{}(F,G_{\SC})
\end{CD}
\]
and an isomorphism
\[
H^0_{}(F,[T_{\SC}\to T])\to H^0_{}(F,G_{\ab})\,\,.
\]
Denote by  $G(F)^{\ab}$  the quotient of $G(F)$ by its group of commutators.
Since $H^0_{}(F,G_{\ab})$ is an abelian group
one gets a homomorphism \[
G(F)^{\ab}\to H^0_{}(F,G_{\ab})
\]
which is injective when $G_{\SC}(F)$ is perfect.

Assume $G$ is quasisplit. Then if we choose $T_{\SC}$ to be maximally split
(i.e. $T_{\SC}=C_{G_{\SC}}(S)$)
such a torus is induced and hence $H^1(F,T_{\SC})$ is trivial. This and the diagram above  imply that the map
\[
H^0(F,G_{\ab})\to H^1(F,G_{\SC})
\]
is also trivial and we get a surjective map
\[
G(F)^{\ab}\to H^0(F,G_{\ab})\,\,.
\]
On the other hand, if $G$ is arbitrary but $F$ is $p$-adic
then, thanks to Kneser's theorem, $H^1_{}(F,G_{\SC})$ is trivial and one has  again a
surjective map
\[
G(F)^{\ab}\to H^0_{}(F,G_{\ab})\,\,.
\]

Now assume that $F$ is global and $G$ is quasi-split at archimedean places. Then, by the remarks above,
one has a surjective map
\[
G(\mathbb A_F)^{\ab}\to H^0(\mathbb A_F,G_{\ab})
\] which is bijective, if in addition $G_{\SC}(F_v)$ is perfect for all non-archimedean places.
In view of the commutative diagram,
\[
\begin{CD}
 G(F)@>>>G(\A_F)@.\\
 @VVV @VVV @. \\
 H^0(F,G_{\ab})@>>>  H^0(\A_F,G_{\ab})@>>> H^0(\A_F/F,G_{\ab})\\
\end{CD}
\]
with exact rows,
we get a surjective map
\[
G(F)\bs G(\mathbb A_F)\to H^0_{\loc}(\mathbb A_F/F,G_{\ab})
\]
where
\[
H^0_{\loc}(\mathbb A_F/F,G_{\ab})=\Img [H^0(\mathbb A_F,G_{\ab})\to H^0(\mathbb A_F/F,G_{\ab})].
\]
Now, to construct the maps
 \eqref{eq: localmap} and \eqref{eq: globalmap}
  and prove the assertions of Lemma  \ref{lem: characters}
 we need the following
variant of Langlands duality for tori.
\begin{lemma} The group
$H^1(W_F,Z(\widehat G))$
is the group of quasi-characters of
$H^0(F,G_{\ab})$
when $F$ is local  and  $H^1_{\loc}(W_F,Z(\widehat G))$
is the group of quasi-characters of
$H^0_{\loc}(\mathbb A_F/F,G_{\ab})$ when $F$ is global.
\end{lemma}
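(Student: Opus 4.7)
The plan is to reduce the statement about the crossed module $G_{\ab}$ to a statement about ordinary tori, to which Langlands reciprocity for tori applies directly.

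First, as in the preceding discussion, choose a maximal torus $T_{\SC}$ of $G_{\SC}$ and let $T=C_G(T_{\SC})$, so that the natural map of complexes $[T_{\SC}\to T]\to[G_{\SC}\to G]$ is a Galois-equivariant quasi-isomorphism. Hence $H^0(F,G_{\ab})=H^0(F,[T_{\SC}\to T])$ in the local case and similarly in the global case. The key observation on the dual side is that \eqref{eq: zhatg}, together with the fact that $Z(\widehat G)$ is contained in every maximal torus of $\widehat G$, gives the short exact sequence of $\Gamma$-modules
\[
1\rightarrow Z(\widehat G)\rightarrow\widehat T\rightarrow\widehat T_{\SC}\rightarrow1.
\]
This exhibits $Z(\widehat G)$ as the $\Gamma$-equivariant kernel of the map of dual tori attached to $T_{\SC}\to T$; in other words $Z(\widehat G)$ is a model for the derived dual of the complex $[T_{\SC}\to T]$.

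In the local case, the plan is to take the long exact sequence in $H^i(W_F,-)$ attached to the displayed short exact sequence, invoke Langlands reciprocity for tori in the form $H^1(W_F,\widehat S)\cong\Hom(S(F),\C^*)$ (and the analogous unitary statement for $H^1(W_F,\widehat S_u)$) for both $S=T$ and $S=T_{\SC}$, and compare this with the four term exact sequence
\[
T_{\SC}(F)\rightarrow T(F)\rightarrow H^0(F,[T_{\SC}\to T])\rightarrow H^1(F,T_{\SC})\rightarrow H^1(F,T).
\]
Taking Pontryagin duals of the latter sequence and using local Tate duality to identify the kernel of $H^1(F,T_{\SC})\to H^1(F,T)$ with the cokernel of $H^0(W_F,\widehat T)\to H^0(W_F,\widehat T_{\SC})$ (up to taking characters), a five-lemma argument then furnishes the desired isomorphism between $H^1(W_F,Z(\widehat G))$ and the group of quasi-characters of $H^0(F,G_{\ab})$. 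The compatibility with the unitary structure is transparent from the same argument applied to the maximal compact subgroup $Z(\widehat G)_u$.

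In the global case, the same strategy works verbatim if we replace everywhere the local objects by their adelic counterparts and take the long exact sequences associated to $1\rightarrow Z(\widehat G)\rightarrow\widehat T\rightarrow\widehat T_{\SC}\rightarrow1$ both over $W_F$ and over each $W_{F_v}$. The $_{\loc}$ construction on $H^1$ then corresponds, via Langlands reciprocity for adelic tori (which is elementary since the map $\Hom(S(F)\bs S(\A),\C^*)\cong H^1_{\loc}(W_F,\widehat S)$ is classical -- see e.g.~\cite{MR1610871}), precisely to the image appearing in $H^0_{\loc}(\A_F/F,G_{\ab})$. A five-lemma comparison with the adelic four term sequence completes the argument.

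The main technical subtlety -- which is where I would expect to have to be most careful -- is the identification of the connecting homomorphisms on the two sides as being mutually Pontryagin dual. In the local case this amounts to checking that the boundary map $H^0(W_F,\widehat T_{\SC})\to H^1(W_F,Z(\widehat G))$ is dual under local Tate duality and Langlands reciprocity to the map $T(F)\to H^0(F,[T_{\SC}\to T])$; in the global case one needs the analogous compatibility for $H^1_{\loc}$ together with the vanishing statements on $\ker^1$ that were already invoked in the body of the paper. Both compatibilities are standard in the formalism of hypercohomology duality for 2-term complexes of tori, but writing them down carefully is the only substantial bookkeeping in the proof.
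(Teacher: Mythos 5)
Your proposal takes a genuinely different route from the paper. You work with an arbitrary maximal torus $T_{\SC}\subset G_{\SC}$, $T=C_G(T_{\SC})$, and compare the long exact sequence of $1\rightarrow Z(\widehat G)\rightarrow\widehat T\rightarrow\widehat T_{\SC}\rightarrow 1$ in Weil-group cohomology with the four-term sequence of the complex $[T_{\SC}\to T]$, gluing the two via Langlands duality for the tori $T$, $T_{\SC}$ and via local (resp.\ global) Tate--Nakayama duality for $\Ker[H^1(F,T_{\SC})\to H^1(F,T)]$, and then a five lemma. This can be pushed through -- it is essentially the standard duality formalism for two-term complexes of tori (abelianized cohomology in the style of Labesse's book) -- but note that everything you label ``standard bookkeeping'' is exactly the content: you need the functoriality of the Langlands correspondence in the torus, the identification $\Coker[\widehat T^\Gamma\to\widehat T_{\SC}^\Gamma]=\Coker[\pi_0(\widehat T^\Gamma)\to\pi_0(\widehat T_{\SC}^\Gamma)]$ (surjectivity on identity components), and above all the duality of the two connecting homomorphisms. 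The paper avoids all of this with one device you do not use: since $G_{\ab}$ and $Z(\widehat G)$ are insensitive to inner twisting one may assume $G$ quasi-split and choose $T_{\SC}=U$ to be the centralizer of a maximal split torus, hence an \emph{induced} torus. Then $H^0(F,G_{\ab})=\Coker[U(F)\to T(F)]$ (resp.\ $H^0_{\loc}(\A_F/F,G_{\ab})=\Coker[U(\A_F)/U(F)\to T(\A_F)/T(F)]$) and $H^1(W_F,Z(\widehat G))=\Ker[H^1(W_F,\widehat T)\to H^1(W_F,\widehat U)]$ (resp.\ with $H^1_{\loc}$), so the lemma drops out of Langlands duality for $T$ and $U$ alone -- no Tate duality, no boundary maps, no five lemma. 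Your approach buys uniformity in the choice of torus; the paper's buys a three-line proof and stays entirely within Galois/Weil cohomology of tori.

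The one place where your write-up is genuinely under-argued is the global half. It does not work ``verbatim'': both sides are defined as images ($H^0_{\loc}(\A_F/F,G_{\ab})=\Img[H^0(\A_F,G_{\ab})\to H^0(\A_F/F,G_{\ab})]$, and $H^1_{\loc}(W_F,Z(\widehat G))=H^1(W_F,Z(\widehat G))/\ker^1$), and matching them requires global Langlands duality for the arbitrary tori $T$, $T_{\SC}$ (where the groups $\ker^1(W_F,\widehat T)$, $\ker^1(W_F,\widehat T_{\SC})$ intervene), global Tate--Nakayama duality, and a verification that the $\ker^1$-ambiguities cancel on the two sides simultaneously. Your appeal to ``the vanishing statements on $\ker^1$ that were already invoked in the body of the paper'' is not accurate: the body only records that $\ker^1(W_F,Z(\widehat G))$ is finite (trivial when the splitting field is cyclic); no vanishing is available or used, and in your setup the $\ker^1$'s must be tracked, not assumed away. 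So either carry out that bookkeeping explicitly (it is done in the abelianized-cohomology literature), or adopt the paper's induced-torus reduction, after which the global statement is as immediate as the local one.
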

\begin{proof}Up to quasi-isomorphism, the crossed module $G_{\ab}$ is insensitive to inner automorphism
and the same is true for $Z(\widehat G)$, hence to establish the lemma we may and will assume $G$ quasi-split
and we may and will choose $T_{\SC}$ induced. Let $U=T_{\SC}$. We have quasi-isomorphisms
$G_{\ab}\simeq[U\to T]$ and $ [Z(\widehat G)\to1]\simeq [\widehat T\to \widehat U]$.
Then, since $U$ is induced
\[H^0(F,G_{\ab})=\Coker [U(F)\to T(F)]
\] when $F$ is local and
\[H^0_{\loc}(\A_F/F,G_{\ab})=\Coker [U(\A_F)/U(F)\to T(\A_F)/T(F)] \]
when $F$ is global. On the other hand
\[H^1_{\loc}(W_F,Z(\widehat G))=\Ker[H^1_{\loc}(W_F,\widehat T)\to H^1_{\loc}(W_F,\widehat U)]\]
and the statement  follows from Langlands duality for $T$ and $U$.
\end{proof}

\begin{remark}
The injectivity of $H^1(W_F,Z(\widehat G))\rightarrow\Hom(G(F),\C^*)$ does not hold for $F=\R$ and general $G$.
For instance, if $G$ is the multiplicative group of the quaternions
then the derived group of $G$  is simply connected
and the quotient \[G(\R)/G_{\SC}(\R)=\mathbb H^\times/SU(2)\]
is
$\R_{>0}^\times$ while $H^1(W_F,Z(\widehat G))=H^1(W_F,\C^*)$ is isomorphic to the group of
characters of $\R^\times$.

On the other hand, in the local case the perfectness of $G_{\SC}(F)$ is not a necessary condition for the surjectivity of
$H^1(W_F,Z(\widehat G))\rightarrow\Hom(G(F),\C^*)$. Indeed if $G$ is the multiplicative group $D^*$ of a division
algebra $D$ over a non-archimedean local field then its derived group consists of the elements of reduced norm one
(see \cite[\S1.4.3]{MR1278263}, which is based on \cite{MR0379444}).
Therefore the characters of $D^*$ factor through the reduced norm.
\end{remark}

\section{On abelianized cohomology for reductive groups\\
by Jean-Pierre Labesse and Bertrand Lemaire}

There are two possible definitions for the abelianized cohomology of connected reductive groups.
The first one, due to Borovoi \cite{MR1401491} and still used by him and his coworkers (e.g.
\cite{1303.6586}),
is based on the use of the complex of groups of multiplicative type $[Z_{\SC}\to Z]$
where $Z_{\SC}$ and $Z$ are the  (scheme-theoretic)
centers of $G_{\SC}$ and $G$ respectively: by definition
\[H^i_{\ab}(F,G):=H^i_{\flf}(\Spec F,[Z_{\SC}\to Z])
\]
where $H^i_{\flf}$ is the flat cohomology\footnote{Flat cohomology can be computed using small or big flat sites; but
both sites yield the same cohomology group; the same applies to \'etale cohomology
 (cf. \cite{MR559531} Remark III.3.2.(a) p.111).}.
For  a complex of such group schemes the use of flat instead of Galois cohomology is essential in positive characteristics.

The second definition, given and used in \cite[\S1]{MR1695940} and also introduced in \cite{MR1155229}
relies on the crossed module \[G_{\ab}:=[G_{\SC}\to G]\]
and one considers the abelian groups defined by the Galois cohomology (in any characteristic)
of the crossed module $G_{\ab}$:
\[H^i(F,G_{\ab})=H^i(F,[G_{\SC}\to G])\] defined by an elementary cocycle construction
in degree $i\le 1$. It is then immediate that one has  canonical morphisms
\[H^i(F,G)\to H^i(F,G_{\ab})\]
in degree $i\le 1$ while the existence of a natural map
\[H^i(F,G)\to H^i_{\ab}(F,G)\] is not obvious: it
relies on the use of $z$-extensions and some of the following remarks.

Now, if $T_{\SC}$ is a maximal torus in $G_{\SC}$ and $T$ the centralizer of its image in $G$
there is a quasi-isomorphism for Galois actions (\'etale topology)
\[[T_{\SC}\to T]\to [G_{\SC}\to G]
\] and hence
\[H^i(F,G_{\ab})\simeq H^i(F,[T_{\SC}\to T])=H^i_{\et}(\Spec F,[T_{\SC}\to T])\,\,.\]
This allows to compute $H^i(F,G_{\ab})$ in many cases.
On the other hand one has a quasi-isomorphism 
in flat topology
\[[Z_{\SC}\to Z]\to [T_{\SC}\to T]\,\,.
\]
Now, since \'etale and flat cohomology coincide for (complexes of) tori
(cf. \cite{MR0244271} Thm. 11.7  p.~180 or \cite{MR559531} Thm. III.3.9 p.114)
we get the following sequence of isomorphisms:
\[H^i_{\flf}(\Spec F,[Z_{\SC}\to Z])
\simeq H^i_{\flf}(\Spec F,[T_{\SC}\to T])\simeq H^i_{\et}(\Spec F,[T_{\SC}\to T])
\]
and hence a canonical isomorphism in degree $i\le 1$
\[H^i_{\ab}(F,G)\simeq H^i(F,G_{\ab})\,\,.
\]

We emphasize that avoiding the small complex $[Z_{\SC}\to Z]$ allows to work
with Galois cohomology all along, thus bypassing highly technical issues like flat cohomology and comparison theorems.
For example, the paper \cite{MR697075} by Kottwitz can be simplified by eliminating all references
to \'etale or flat cohomology. Let $G_{\Gad}$ be the adjoint group of some connected reductive group $G$.
The basic construction in [ibid.] is a natural map in Galois cohomology:
\[f:H^1(F,G_{\Gad})\to H^2(F,\mathbb G_m)\,\,.
\] It can be obtained as follows: one has a series of natural morphisms in Galois cohomology
\[H^1(F,G_{\Gad})\to H^1(F,[G_{\SC}\to G_{\Gad}])\simeq H^1(F,[T_{\SC}\to T_{\Gad}])\to H^2(F,T_{\SC})\]
and a map induced by the half sum of positive roots (with respect to a fixed basis)
\[H^2(F,T_{\SC})\to H^2(F,\mathbb G_m)\,\,.\]
The map  $f$ is the compositum of the above maps. Now consider the diagram
\[
\begin{CD}
 H^1(F,G_{\Gad})@>>> H^2(F,T_{\SC})@>>> H^2(F,T_{\Gad}) \\
 @. @VVV @VVV  \\
@.  H^2(F,\mathbb G_m)@>>> H^2(F,\mathbb G_m)\\
\end{CD}
\]
where  the last vertical map is
induced by the sum of positive root  while
the map on the second line is $x\mapsto x^2$.
The diagram is commutative and
the compositum of maps in the first line is zero.
This shows that
$f$ is independent of the choice of the root basis
and elements in its image are of order 2.

\end{document}